\theoremstyle{plain}
\newtheorem{lemma}{Lemma}[section]
  \newtheorem{proposition}[lemma]{Proposition}
  \newtheorem{theorem}[lemma]{Theorem}
  \newtheorem*{theorem*}{Theorem}
  \newtheorem*{fact*}{Fact}
  \newtheorem*{claim*}{Claim}
  \newtheorem{question}[lemma]{Question}
  \newtheorem{slogan}[lemma]{Slogan}
\theoremstyle{definition}
  \newtheorem{definition}[lemma]{Definition}
\theoremstyle{remark}
  \newtheorem{remark}[lemma]{Remark}
\newsavebox{\@brx}
\newcommand{\llangle}[1][]{\savebox{\@brx}{\(\m@th{#1\langle}\)}%
  \mathopen{\copy\@brx\kern-0.5\wd\@brx\usebox{\@brx}}}
\newcommand{\rrangle}[1][]{\savebox{\@brx}{\(\m@th{#1\rangle}\)}%
  \mathclose{\copy\@brx\kern-0.5\wd\@brx\usebox{\@brx}}}
\DeclareMathOperator*{\EEE}{\EE}
\newcommand\imCMsym[4][\mathord]{%
  \DeclareFontFamily{U} {#2}{}
  \DeclareFontShape{U}{#2}{m}{n}{
    <-6> #25
    <6-7> #26
    <7-8> #27
    <8-9> #28
    <9-10> #29
    <10-12> #210
    <12-> #212}{}
  \DeclareSymbolFont{CM#2} {U} {#2}{m}{n}
  \DeclareMathSymbol{#4}{#1}{CM#2}{#3}
}
\begin{document}

\newcommand{\eps}[0]{\varepsilon}

\newcommand{\AAA}[0]{\mathbb{A}}
\newcommand{\CC}[0]{\mathbb{C}}
\newcommand{\EE}[0]{\mathbb{E}}
\newcommand{\FF}[0]{\mathbb{F}}
\newcommand{\NN}[0]{\mathbb{N}}
\newcommand{\PP}[0]{\mathbb{P}}
\newcommand{\QQ}[0]{\mathbb{Q}}
\newcommand{\RR}[0]{\mathbb{R}}
\newcommand{\TT}[0]{\mathbb{T}}
\newcommand{\ZZ}[0]{\mathbb{Z}}

\newcommand{\cA}[0]{\mathcal{A}}
\newcommand{\cB}[0]{\mathcal{B}}
\newcommand{\cC}[0]{\mathcal{C}}
\newcommand{\cD}[0]{\mathcal{D}}
\newcommand{\cE}[0]{\mathcal{E}}
\newcommand{\cF}[0]{\mathcal{F}}
\newcommand{\cH}[0]{\mathcal{H}}
\newcommand{\cG}[0]{\mathcal{G}}
\newcommand{\cK}[0]{\mathcal{K}}
\newcommand{\cM}[0]{\mathcal{M}}
\newcommand{\cN}[0]{\mathcal{N}}
\newcommand{\cP}[0]{\mathcal{P}}
\newcommand{\cR}[0]{\mathcal{S}}
\newcommand{\cS}[0]{\mathcal{S}}
\newcommand{\cT}[0]{\mathcal{S}}
\newcommand{\cU}[0]{\mathcal{U}}
\newcommand{\cV}[0]{\mathcal{V}}
\newcommand{\cW}[0]{\mathcal{W}}
\newcommand{\cX}[0]{\mathcal{X}}
\newcommand{\cY}[0]{\mathcal{Y}}
\newcommand{\cZ}[0]{\mathcal{Z}}

\newcommand{\fg}[0]{\mathfrak{g}}
\newcommand{\fs}[0]{\mathfrak{s}}
\newcommand{\fk}[0]{\mathfrak{k}}
\newcommand{\fZ}[0]{\mathfrak{Z}}

\newcommand{\bmu}[0]{\boldsymbol\mu}
\newcommand{\AUT}[0]{\mathbf{Aut}}
\newcommand{\Aut}[0]{\operatorname{Aut}}
\newcommand{\Frob}[0]{\operatorname{Frob}}
\newcommand{\GI}[0]{\operatorname{GI}}
\newcommand{\HK}[0]{\operatorname{HK}}
\newcommand{\HOM}[0]{\mathbf{Hom}}
\newcommand{\Hom}[0]{\operatorname{Hom}}
\newcommand{\Ind}[0]{\operatorname{Ind}}
\newcommand{\Lip}[0]{\operatorname{Lip}}
\newcommand{\LHS}[0]{\operatorname{LHS}}
\newcommand{\RHS}[0]{\operatorname{RHS}}
\newcommand{\Sub}[0]{\operatorname{Sub}}
\newcommand{\id}[0]{\operatorname{id}}
\newcommand{\image}[0]{\operatorname{Im}}
\newcommand{\poly}[0]{\operatorname{poly}}
\newcommand{\trace}[0]{\operatorname{Tr}}
\newcommand{\sig}[0]{\ensuremath{\tilde{\cS}}}
\newcommand{\psig}[0]{\ensuremath{\cP\tilde{\cS}}}
\newcommand{\metap}[0]{\operatorname{Mp}}
\newcommand{\symp}[0]{\operatorname{Sp}}
\newcommand{\dist}[0]{\operatorname{dist}}
\newcommand{\stab}[0]{\operatorname{Stab}}
\newcommand{\HCF}[0]{\operatorname{hcf}}
\newcommand{\LCM}[0]{\operatorname{lcm}}
\newcommand{\SL}[0]{\operatorname{SL}}
\newcommand{\GL}[0]{\operatorname{GL}}
\newcommand{\rk}[0]{\operatorname{rk}}
\newcommand{\sgn}[0]{\operatorname{sgn}}
\newcommand{\uag}[0]{\operatorname{UAG}}
\newcommand{\freiman}[0]{Fre\u{\i}man}
\newcommand{\tf}[0]{\operatorname{tf}}
\newcommand{\ev}[0]{\operatorname{ev}}
\newcommand{\Conv}[0]{\mathop{\scalebox{1.5}{\raisebox{-0.2ex}{$\ast$}}}}
\newcommand{\bs}[0]{\backslash}
\newcommand{\heis}[3]{ \left(\begin{smallmatrix} 1 & \hfill #1 & \hfill #3 \\ 0 & \hfill 1 & \hfill #2 \\ 0 & \hfill 0 & \hfill 1 \end{smallmatrix}\right)  }
\newcommand{\uppar}[1]{\textup{(}#1\textup{)}}
\newcommand{\pol}[0]{\l}
\newcommand{\sslash}{\mathbin{/\mkern-6mu/}}
\newcommand{\PGL}[0]{\operatorname{PGL}}
\newcommand{\PSL}[0]{\operatorname{PSL}}
\newcommand{\spn}[0]{\operatorname{span}}
\newcommand{\im}[0]{\operatorname{im}}
\newcommand{\CS}[0]{\operatorname{CS}}
\newcommand{\MERGE}[0]{\operatorname{MERGE}}
\newcommand{\TRIVIAL}[0]{\operatorname{TRIVIAL}}

\begin{frontmatter}[classification=text]


\author[f]{Freddie Manners}

\begin{abstract}
  Let $\Phi = (\phi_1,\dots,\phi_6)$ be a system of $6$ linear forms in $3$ variables, i.e.~$\phi_i \colon \ZZ^3 \to \ZZ$ for each $i$.  Suppose also that $\Phi$ has Cauchy--Schwarz complexity $2$ and true complexity $1$, in the sense defined by Gowers and Wolf; in fact this is true generically in this setting.  Finally let $G = \FF_p^n$ for any $p$ prime and $n \ge 1$.  Then we show that multilinear averages by $\Phi$ are controlled by the $U^2$-norm, with a polynomial dependence; i.e.~if $f_1,\dots,f_6 \colon G \to \CC$ are functions with $\|f_i\|_{\infty} \le 1$ for each $i$, then for each $j$, $1 \le j \le 6$:
  \[
    \left| \EEE_{x_1,x_2,x_3 \in G} f_1(\phi_1(x_1,x_2,x_3)) \dots f_6(\phi_6(x_1,x_2,x_3)) \right| \le \|f_j\|_{U^2}^{1/C}
  \]
  for some $C > 0$ depending on $\Phi$.  This recovers and strengthens a result of Gowers and Wolf in these cases.  Moreover, the proof uses only multiple applications of the Cauchy--Schwarz inequality, avoiding appeals to the inverse theory of the Gowers norms.

  We also show that some dependence of $C$ on $\Phi$ is necessary; that is, the constant $C$ can unavoidably become large as the coefficients of $\Phi$ grow.
\end{abstract}
\end{frontmatter}


\section{Introduction}

Let $G$ be a finite abelian group and $X \subseteq G$ a subset.  Many problems of interest in additive combinatorics and related fields involve counting solutions to some system of equations within $X$.  For instance, we might wish to count Schur triples $(x,y,x+y)$ all of whose coordinates lie in $X$, or $4$-term arithmetic progressions $(x, x+h, x+2h, x+3h)$ where each term lies in $X$, etc..

The most general case of this kind of question is as follows: given a tuple of $r$ linear forms $\Phi = (\phi_1,\dots,\phi_r)$ where $\phi_i \colon \ZZ^d \to \ZZ$, and $r$ functions $f_1,\dots,f_r \colon G \to \CC$, estimate the quantity
\[
  \Lambda_\Phi(f_1,\dots,f_r) = \EEE_{x_1,\dots,x_d \in G} f_1(\phi_1(x_1,\dots,x_d)) \dots f_r(\phi_r(x_1,\dots,x_d)) \, .
\]
(Here we have abused notation to let $\phi_i \colon \ZZ^d \to \ZZ$ induce a function $G^d \to G$, in the obvious way.)

So, in our examples above, we take $f_1 = \dots = f_r = 1_X$, the indicator function of $X$; however it is convenient to allow more general functions in the definition of $\Lambda_\Phi$ as they arise in intermediate computations.  In our examples, $\Phi$ is as follows:
\begin{itemize}
  \item in the case of Schur triples, $r=3$, $d=2$, $\phi_1(x,y) = x$, $\phi_2(x,y) = y$ and $\phi_3(x,y) = x+y$;
  \item in the case of $4$-term arithmetic progressions, $r=4$, $d=2$, $\phi_1(x,y) = x$, $\phi_2(x,y) = x+y$, $\phi_3(x,y) = x+2y$ and $\phi_4(x,y) = x + 3y$.
\end{itemize}

A fundamental observation in much recent progress in such questions (as applied to Szemer\'edi-type theorems, or counting solutions to linear equations in the primes), originally due to Gowers \cite{gowers-aps}, is that averages $\Lambda_\Phi$ are controlled by Gowers uniformity norms.\footnote{We will assume the reader is familiar with the definition of Gowers norms and some related concepts; for an introduction, see e.g.~\cite[Appendix B]{green-tao-primes}, \cite[Chapter 11]{tao-vu}, or \cite{tao-book}.}

A weak statement of this type is that if $X$ has density $\alpha$ and $X$ is suitably quasirandom in the sense that $\|1_X - \alpha\|_{U^{s+1}} = o(1)$, where $s$ is some positive integer, then
\begin{equation}
  \label{eq:weak-gowers-control}
  \Lambda_\Phi(1_X,\dots,1_X) = \alpha^r + o(1)
\end{equation}
i.e.~the number of solutions to the system $\Phi$ in $X$ is roughly the same as the expected count in a random set, i.e.~$\alpha^r |G|^d$.  A stronger type of statement one could make is that if $f_1,\dots,f_d \colon G \to \CC$ are any functions with $\|f_i\|_\infty \le 1$ for all $i$, and $\|f_j\|_{U^{s+1}} = o(1)$ for any one $j \in \{1,\dots,d\}$, then
\begin{equation}
  \label{eq:strong-gowers-control}
  \left|\Lambda_\Phi(f_1,\dots,f_r)\right| = o(1) \, ;
\end{equation}
and indeed this kind of statement implies the previous one. 

The remaining question is when one has such a statement for a system of linear forms $\Phi$, and if so, how small the positive integer $s$ can be; i.e.~how far one has to go in the hierarchy of Gowers norms to control $\Lambda_\Phi$.  For instance, for $k$-term arithmetic progressions, Gowers \cite{gowers-aps} showed that a statement of type \eqref{eq:strong-gowers-control} holds for $s = k - 2$, and with a good bound; specifically,
\[
  \left|\Lambda_{k\mathrm{AP}}(f_1,\dots,f_k)\right\| \le \|f_j\|_{U^{k-1}}
\]
whenever $\|f_i\|_\infty \le 1$ for each $i$, and for any $1 \le j \le k$. The proof is $k-1$ applications of the Cauchy--Schwarz inequality.

Moreover, Gowers gave examples to show that $s=k-2$ cannot be improved.  For instance, when $k=4$ and $G = \ZZ/p\ZZ$, we can consider functions
\begin{align*}
  f_1(x) &= \exp(2 \pi i a x^2 / p) &
  f_3(x) &= \exp(2 \pi i a x^2 / p) \\
  f_2(x) &= \exp(-2 \pi i a x^2 / p) &
  f_4(x) &= \exp(-2 \pi i a x^2 / p)
\end{align*}
for some nonzero $a \in \ZZ/p\ZZ$, and observe that $f_1(x) f_2(x+h) f_3(x+2h) f_4(x+3h) = 1$ pointwise for any $x,h \in G$.  So, $\Lambda_{4\mathrm{AP}}(f_1,\dots,f_4) = 1$, but one can show that $\|f_j\|_{U^2} = O(p^{-1/2})$.  This rules out a statement of type \eqref{eq:strong-gowers-control} for $s=1$; taking appropriate level sets of these functions rules out \eqref{eq:weak-gowers-control} also.

The first systematic approach to this question for general systems of linear forms was given by Green and Tao \cite{green-tao-primes} in the course of their work on linear equations in primes.  The following is essentially implicit as a much easier case of results from that paper, and was isolated in \cite{gowers-wolf-1}; however, the terminology we use is slightly different to both.
\begin{proposition}[Essentially from {\cite{green-tao-primes}}]
  \label{prop:csc}
  Given a prime $p$, a system $\Phi = (\phi_1,\dots,\phi_r)$ of linear forms $\ZZ^d \to \ZZ$, and an index $j$, $1 \le j \le r$, we say $\Phi$ has \emph{Cauchy--Schwarz complexity $\le{}s$ at $j$, modulo $p$}, if the following holds: the indices $\{1,\dots,r\} \setminus \{j\}$ can be partitioned into $s+1$ classes $I_1,\dots,I_{s+1}$ such that $\phi_j$ modulo $p$, considered as a linear form $\FF_p^d \to \FF_p$, is not contained in $\spn_{\FF_p}(\phi_i \colon i \in I_k)$ for any $1 \le k \le s+1$.

  Let $G = \FF_p^n$, where $p$, $n$ may be any size \uppar{including say $n=1$}.
  If $\Phi$ has Cauchy--Schwarz complexity $\le s$ at $j$ modulo $p$, then for any functions $f_1,\dots,f_d \colon G \to \CC$ with $\|f_i\|_\infty \le 1$ for each $i$, we have
  \[
    \left| \Lambda_\Phi(f_1,\dots,f_r) \right| \le \|f_j\|_{U^{s+1}} \, .
  \]
\end{proposition}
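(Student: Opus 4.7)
The plan is to apply the Cauchy--Schwarz inequality exactly $s+1$ times, once for each class of the partition $I_1,\ldots,I_{s+1}$, in the spirit of the standard ``Cauchy--Schwarz game'' of Green--Tao. The first ingredient is a duality argument: for each $k$, since $\phi_j \notin \spn_{\FF_p}(\phi_i : i \in I_k)$, there exists $v_k \in \FF_p^d$ with $\phi_j(v_k) \ne 0$ in $\FF_p$ and $\phi_i(v_k) = 0$ for every $i \in I_k$. Since $G = \FF_p^n$ is an $\FF_p$-module, the vector $v_k$ gives rise to a shift direction: for any $t \in G$, the coordinatewise scalar product $t v_k \in G^d$ satisfies $\phi_i(t v_k) = \phi_i(v_k) \cdot t \in G$.

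The core of the argument is an iterative step that eliminates one class at a time, maintaining the invariant that after $\ell$ rounds,
\[
  |\Lambda_\Phi(f_1,\ldots,f_r)|^{2^\ell} \le \EEE_{z \in G^d,\, h_1,\ldots,h_\ell \in G} \prod_{i \in J_\ell} g_i^{(\ell)}(\phi_i(z)),
\]
where $J_\ell = \{j\} \cup I_{\ell+1} \cup \cdots \cup I_{s+1}$, each $g_i^{(\ell)}$ has sup norm $\le 1$, and $g_j^{(\ell)}$ is an $\ell$-th order additive derivative of $f_j$ in directions of the form $h_k \phi_j(v_k)$. To advance the invariant from $\ell$ to $\ell+1$, I would translate $z \mapsto z + t v_{\ell+1}$ with $t \in G$: the factors indexed by $i \in I_{\ell+1}$ become independent of $t$ by construction of $v_{\ell+1}$, so after one Cauchy--Schwarz in $z$ they can be bounded by $1$ and pulled out. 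Expanding the resulting square duplicates the $t$-average into $(t, t')$; the substitution $h_{\ell+1} = t - t'$ converts each surviving $g_i^{(\ell)}$ for $i \in J_\ell \setminus I_{\ell+1} = J_{\ell+1}$ into the discrete derivative $\Delta_{h_{\ell+1} \phi_i(v_{\ell+1})} g_i^{(\ell)}$, and a final change of variable in $z$ restores the plain form $\phi_i(z)$ inside the arguments.

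After $s+1$ iterations the product contains only the $i = j$ term. The hypothesis forces $\phi_j \not\equiv 0 \pmod p$, so $\phi_j \colon G^d \to G$ is surjective; rescaling $h_k \mapsto h_k / \phi_j(v_k)$---which is valid because $p$ is prime and hence $\phi_j(v_k) \in \FF_p^\times$ acts invertibly on $G$---and changing variable $u = \phi_j(z)$, the right-hand side becomes exactly $\|f_j\|_{U^{s+1}}^{2^{s+1}}$. Taking $2^{s+1}$-th roots yields the stated bound. The main effort goes into the bookkeeping at each round, namely tracking how each $g_i^{(\ell)}$ with $i$ in a later class transforms under the affine shift in $z$ so that the invariant really is preserved; but past the initial duality step there is no conceptual obstacle, matching the claim that the argument is purely Cauchy--Schwarz.
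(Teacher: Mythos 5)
Your proof is correct and follows the same route the paper has in mind: the paper gives no argument of its own for Proposition~\ref{prop:csc}, simply citing Green--Tao and noting that ``the proof is $s+1$ applications of Cauchy--Schwarz,'' and your duality step ($v_k \in \bigcap_{i\in I_k}\ker\phi_i$ with $\phi_j(v_k)\neq 0$, possible precisely because $\phi_j\notin\spn(\phi_i\colon i\in I_k)$) followed by one Cauchy--Schwarz per class is exactly that standard argument. The only cosmetic slip is that the Cauchy--Schwarz at round $\ell$ should be taken jointly in $(z,h_1,\dots,h_\ell)$ rather than in $z$ alone, but this does not affect the argument.
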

If $\Phi$ has Cauchy--Schwarz complexity $\le s$ at every index, modulo $p$, we could just say the system has Cauchy--Schwarz complexity $\le s$ modulo $p$, and write $s_{\CS}(\Phi)$ for the smallest $s$ for which this holds (where $s_{\CS}$ implicitly depends on $p$).  If $p$ is very large, taking the span of $\phi_1,\dots,\phi_r$ as linear forms $\FF_p^d \to \FF_p$ is essentially equivalent to working over $\QQ$ and the value of $s_{\CS}$ stabilizes.

As the name suggests, the proof of Proposition \ref{prop:csc} is $s+1$ applications of Cauchy--Schwarz, as in Gowers' work.  The content of the proposition is really in establishing the linear algebra condition that guarantees this Cauchy--Schwarz argument will work.

Following this, Gowers and Wolf, in a series of papers \cite{gowers-wolf-1,gowers-wolf-2,gowers-wolf-3,gowers-wolf-4}, considered the question: is the value of $s$ given by Cauchy--Schwarz complexity optimal?  It is natural to try to adapt the examples given by Gowers for $k$-term progressions to the general case to give a lower bound.  The task comes down to finding phase polynomials $f_1,\dots,f_r \colon G \to \CC$ of degree $D$, i.e.~functions of the form $f_j(x) = \exp(2 \pi i P_j(x))$ where $P_j \colon G \to \RR/\ZZ$ is a degree $D$ polynomial (in a natural sense) for each $j$, such that
\[
  \Lambda_\Phi(f_1,\dots,f_r) = 1
\]
i.e.~the multilinear average is equal to $1$ pointwise.  By contrast $\|f_j\|_{U^{D}}$ will typically be very small when $f_j$ is a degree $D$ phase polynomial, so this rules out a statement of type \eqref{eq:strong-gowers-control} or \eqref{eq:weak-gowers-control} for $s\le D-1$.

It turns out that this is possible if and only if $\phi_1^D,\dots,\phi_r^D \in ((\FF_p^d)^{\ast})^{\otimes D}$ are linearly dependent, where $\phi_i^D = \phi_i^{\otimes D}$ are interpreted as symmetric multilinear forms over $\FF_p^d$.  In other words, $\Lambda_\Phi$ can only be fully controlled by the $U^{s+1}$-norm if $\phi_1^{s+1},\dots,\phi_r^{s+1}$ are linearly independent elements of $((\FF_p^d)^{\ast})^{\otimes D}$.

It also turns out that this lower bound, arising from explicit phase polynomials, and the upper bound coming from Cauchy--Schwarz complexity, do not agree in general.  Gowers and Wolf conjectured that the lower bound is the truth; that is, if the \emph{true complexity} of $\Phi$ over $\FF_p$ is defined to be the smallest $s$ such that $\phi_1^{s+1},\dots,\phi_r^{s+1}$ are linearly independent,\footnote{In fact Gowers and Wolf set up the definitions slightly differently: they define true complexity to be the smallest $s$ such that \eqref{eq:weak-gowers-control} holds, and conjecture it is equal to the algebraic quantity we have just defined.  Since this conjecture is known to be true in cases of interest, defining things the other way round should hopefully not cause too much confusion.} then \eqref{eq:strong-gowers-control} holds for this $s$ and any $1 \le j \le r$.  By our previous discussion, such a statement would be (qualitatively) best possible in $s$.

In what follows, we write $s = s(\Phi)$ to denote this notion of the true complexity of a system of linear forms (over $\FF_p$), unless otherwise stated.

This conjecture has now been resolved in essentially all cases of interest. The original paper \cite{gowers-wolf-1} by Gowers and Wolf proved the case where $s=1$, $s_{\CS} = 2$, and $G = \FF_p^n$ for $p$ fixed and $n$ large.  This case was proved again in \cite{gowers-wolf-3} (also by Gowers and Wolf) with an improved quantitative bound on $|\Lambda_{\Phi}(f_1,\dots,f_r)|$ in terms of $\|f_j\|_{U^{s+1}}$.  Still when $G = \FF_p^n$ for $p$ fixed, but not too small, the general case (i.e.~arbitrary finite $s$ and $s_{\CS}$) was proven in another paper \cite{gowers-wolf-2} by the same authors.  They also showed the case $s_{\CS} = 2$ and $s=1$ for $G = \ZZ/p\ZZ$, where this time $p$ is large, in \cite{gowers-wolf-4}.  The general result in the cyclic setting $G = \ZZ/p\ZZ$ for $p$ large was shown by Green and Tao \cite{green-tao-arithmetic-regularity} as an application of their nilsequence-based arithmetic regularity lemma.

Later, Hatami and Lovett \cite{hatami-lovett} extended the results of \cite{gowers-wolf-2} to the asymmetric case, where $\phi_1^{s+1},\dots,\phi_r^{s+1}$ may be linearly dependent but not all of these multilinear forms are in the linear span of the others, which corresponds to \eqref{eq:strong-gowers-control} holding for some choices of $j$ but not others. Finally, Hatami, Hatami and Lovett \cite{hatami-hatami-lovett} removed the requirement in the case $G = \FF_p^n$ for $p$ fixed that $p$ be not too small.

We comment only very briefly on the proofs, as they will not play a large role in the current work.  We focus on the simplest case of $p$ fixed, $s=1$ and $s_{\CS} = 2$.  By the assumption on $s_{\CS}$ and Proposition \ref{prop:csc}, we are free to discard small $U^{3}$ errors at any point.  By the inverse theorem for the Gowers $U^3$-norm, this means we are free to assume that each $f_i$ is a linear combination of a few phase polynomials of degree at most $2$.  We would like to argue that when $s=1$, the quadratic terms do not contribute much to $\Lambda_\Phi$; however, this requires a more robust version of our assumption that $\phi_1^2,\dots,\phi_r^2$ are linearly independent (in effect, we need that no non-trivial linear combination over $((\FF_p^{nd})^{\ast})^{\otimes 2}$ has low rank).  Bridging this gap between the robust and non-robust statements is the heart of the argument.

At least qualitatively, these works verify all the central conjectures concerning true complexity.  Nonetheless, there are some unresolved questions of interest.
\begin{question}
  \label{q:bounds}
  What are the best possible bounds in the true complexity statement?  That is, how small must $\delta$ be in terms of $\eps$ to ensure that $\|f_j\|_{U^{s+1}} \le \delta$ implies $|\Lambda_{\Phi}(f_1,\dots,f_d)| \le \eps$?
\end{question}
In the case $s=1$ and $s_{\CS}=2$, Gowers and Wolf \cite{gowers-wolf-3, gowers-wolf-4} obtained a doubly exponential dependence, i.e.~$\delta \approx \exp \exp(-O(\eps^{-C}))$.\footnote{One of these exponentials can probably be removed using subsequent improved bounds in the inverse theorem for the $U^3$-norm that follow from work of Sanders \cite{sanders}.  The author is grateful to the anonymous reviewer for pointing this out.}  In all other cases where $s \ne s_{\CS}$ the best known bounds are ineffective, or as good as ineffective, as they rely on the inverse theorems for the $U^k$-norms for $k \ge 4$ for which no good bounds are known.

In \cite[Problem 7.8]{gowers-wolf-2}, Gowers and Wolf suggested that the dependence cannot be too good, and specifically, not polynomial; that is, they asked whether one could find a counterexample ruling out $\delta \approx \eps^C$.

This is closely related to the following question.

\begin{question}
  \label{q:elementary}
  In cases where the true complexity and Cauchy--Schwarz complexity differ, could a true complexity bound be proven by elementary means, e.g.~by many applications of the Cauchy--Schwarz inequality; or is some appeal to the structural theory of higher order Fourier analysis essential?  Is there some qualitative feature which separates the elementary and non-elementary cases?
\end{question}

The primary motivation behind Gowers and Wolf's appeal for counterexamples to good bounds is that this would rule out a proof based only on complicated applications of the Cauchy--Schwarz inequality, as that would surely give a polynomial bound.

Our final question is at first appearance more eccentric but we will see its relevance shortly.

\begin{question}
  \label{q:coefficients}
  Working over $G= \ZZ/p\ZZ$ for $p$ a large prime, and when $s \ne s_{\CS}$, all the known bounds on $\eps$ in terms of $\delta$ depend on the coefficients of the linear forms in $\Phi$, not just on the values of $s$ and $s_{\CS}$.  In practice, these results are only effective if the coefficients are essentially bounded.  

  By contrast, the Cauchy--Schwarz complexity bound is completely uniform in the coefficients, provided the hypotheses are satisfied.

  Is this restriction to bounded coefficients necessary, whenever $s \ne s_{\CS}$?

  Working over $\FF_p^n$ for $p$ fixed and $n$ large, there are only finitely many choices of linear forms, so any dependence on the coefficients can be removed. In this setting, the analogous question is whether the bounds should genuinely depend on $p$.
\end{question}

In this paper, we consider what is in some sense the smallest non-trivial case where $s \ne s_{\CS}$, which concerns systems of $6$ linear forms in $3$ variables (i.e., $r=6$ and $d=3$).  Indeed, when $d=2$ it is always the case that $s = s_{\CS}$, and similarly for $d=3$ and $r \le 5$.  However, a generic system $\Phi$ with $r=6$ and $d=3$ will have $s=1$ but $s_{\CS}=2$ (see Section \ref{sec:63} for a discussion).

In this limited setting, we are able to give fairly complete answers to the questions above.  We now outline the main results.

\begin{theorem}
  \label{thm:positive}
  Let $\Phi = (\phi_1,\dots,\phi_6)$ be a system of $6$ linear forms in $3$ variables, let $p$ be a prime \uppar{not necessarily small}, and let $G = \FF_p^n$ for any $n \ge 1$.

  Then, provided the system $\Phi$ has true complexity $1$ over $\FF_p$, for any functions $f_1,\dots,f_6 \colon G \to \CC$ with $\|f_i\|_\infty \le 1$ for each $i$, and for any $j$, $1 \le j \le 6$, we have the bound
  \[
    \left| \Lambda_\Phi(f_1,\dots,f_6) \right| \le \|f_j\|_{U^2(G)}^{1/C}
  \]
where $C = C(\Phi, j) > 0$ is some constant depending on the coefficients of $\Phi$, and perhaps $j$, but crucially not on $p$ or $n$.

  Moreover, the above inequality can be derived only using multiple applications of the Cauchy--Schwarz inequality. However, the number of applications used in the proof increases without bound as the coefficients of $\Phi$ grow.
\end{theorem}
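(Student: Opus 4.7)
The plan is to prove the bound by an iterative Cauchy--Schwarz scheme. Starting from $\Lambda_\Phi(f_1,\dots,f_6)$, I would seek to produce an inequality of the form $|\Lambda_\Phi|^2 \le \Lambda_{\Phi^{(1)}}$ for a new system $\Phi^{(1)}$ on more variables, and then iterate $|\Lambda_{\Phi^{(i-1)}}|^2 \le \Lambda_{\Phi^{(i)}}$ so that after $k$ steps we reach a system $\Phi^{(k)}$ whose Cauchy--Schwarz complexity at the target index $j$ is only $1$. Proposition~\ref{prop:csc} applied to $\Phi^{(k)}$ then yields a $U^2$-type bound (possibly after one further Cauchy--Schwarz to turn a $U^2$-norm of a derivative of $f_j$ back into $\|f_j\|_{U^2}$ itself), and unwinding gives $|\Lambda_\Phi| \le \|f_j\|_{U^2}^{1/2^k}$, matching the polynomial exponent with $C = 2^k$.

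Under the true-complexity-$1$ hypothesis, $\phi_1^2,\dots,\phi_6^2$ are linearly independent in the six-dimensional space of symmetric bilinear forms on $\FF_p^3$, so they span it. Every product $\phi_i\phi_k$ is therefore a specific $\FF_p$-linear combination of the $\phi_m^2$, and these algebraic identities should be the source of the cancellation that Cauchy--Schwarz is meant to extract.

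To drive the iteration I would introduce a non-negative integer invariant $k(\Phi, j)$ measuring how far $\Phi$ is from Cauchy--Schwarz complexity $1$ at $j$---for example, the minimum number of ``difference'' enlargements needed before some partition of the remaining forms leaves $\phi_j$ outside the $\FF_p$-span of every class. Each iteration should choose a Cauchy--Schwarz direction (a variable to duplicate together with a grouping of the forms) that strictly decreases this invariant; once it reaches zero, Proposition~\ref{prop:csc} closes the bound. One then obtains $C = 2^{k(\Phi, j)}$, and the dependence of $k(\Phi, j)$ on the coefficients of $\Phi$ produces the stated dependence of $C$.

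The main obstacle is to design the invariant and the Cauchy--Schwarz step in tandem, and to verify that the invariant genuinely decreases at each step while remaining bounded in terms of $\Phi$. The true-complexity-$1$ hypothesis should preclude the structural obstructions that would otherwise stall the iteration: because $\phi_1^2,\dots,\phi_6^2$ span all symmetric bilinear forms, at every stage some quadratic relation among the forms is available to be converted into a genuine Cauchy--Schwarz gain rather than being lost to a trivial $\|\cdot\|_\infty \le 1$ estimate. Bounding $k(\Phi, j)$ in terms of the coefficients of $\Phi$---rather than just the values of $s(\Phi)$ and $s_{\CS}(\Phi)$---is the delicate point, and is precisely what the final sentence of the theorem anticipates cannot be avoided.
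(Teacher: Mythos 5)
Your proposal captures the correct high-level skeleton---iterate Cauchy--Schwarz, track progress, stop when a Cauchy--Schwarz-complexity-$1$ system is reached---but the substance of the proof is precisely the part you defer: designing the iteration and the progress measure. The paper does not proceed by defining an abstract integer invariant that decreases step by step. Instead it works geometrically: six forms with true complexity $\geq 2$ are exactly those whose corresponding points lie on a conic in $\PP(V^\ast)$ (Slogan \ref{slogan:six-forms}), and the argument fixes $[\phi_1],\dots,[\phi_4]$ while treating $[\phi_5],[\phi_6]$ as points on a fixed line $\ell$. Each ``block'' of four Cauchy--Schwarz applications implements a specific M\"obius transformation $\sigma_{i\to j}$ on $\ell \cong \PP^1(\FF_p)$ (Lemmas \ref{lem:safe-block}--\ref{lem:key-lemma}), and a variant of Euclid's algorithm (Lemma \ref{lem:euclid}) then walks $[\phi_5]$ to a point collinear with two of $[\phi_1],\dots,[\phi_4]$ in $O(K^{O(1)})$ or $O(\log p)$ block moves, all while Lemma \ref{lem:safe-block} guarantees the not-on-a-conic property is preserved. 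None of this resembles decreasing a single invariant; the iteration ``moves toward'' a degenerate configuration in a projective sense, not toward Cauchy--Schwarz complexity $1$ directly.

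Two more concrete gaps. First, the terminal configuration in the paper does \emph{not} have Cauchy--Schwarz complexity $1$ at index $j$: Proposition \ref{prop:63-trivial-cs}(iii) shows that even when three of the points are collinear, $s_{\CS}$ at the relevant index is still $2$. The finishing step is instead Proposition \ref{prop:skew-collinear}, which applies one more tailor-made Cauchy--Schwarz to a merged function to extract a $U^2$ bound. Second, your intuition that ``each product $\phi_i\phi_k$ is a linear combination of the $\phi_m^2$'' and that this supplies cancellation at each step has no obvious translation into a Cauchy--Schwarz move; the paper never uses this decomposition. Without the geometric block-move mechanism---or some substitute that actually specifies each Cauchy--Schwarz direction, the accompanying merges, and why the process terminates with a controllable number of steps---the proposal remains a restatement of the problem rather than a proof.
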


Note that, since no restrictions are placed on $n$ and $p$, this encompasses the cases $\ZZ/p\ZZ$ for $p$ a large prime as well as $\FF_p^n$ for $p$ fixed and $n$ large. In intermediate cases where $p$ and $n$ are both large, even qualitatively the result may officially be new, although these cases are rarely of interest.

The key observation underlying the proof is the following.
\begin{slogan}
  \label{slogan:cs}
  Cauchy--Schwarz complexity is not preserved under applying the Cauchy--Schwarz inequality.
\end{slogan}
By this we mean the following.  If we start with a system of linear forms $\Phi$ and apply the Cauchy--Schwarz inequality to one of the functions, what we get can be thought of as a new linear system $\Phi'$ with $2(r-1)$ forms and $2d-1$ variables.  It is not always true that if $s_{\CS}(\Phi) > 1$ then $s_{\CS}(\Phi') > 1$; so in some cases we can now apply the Cauchy--Schwarz complexity bound (Proposition \ref{prop:csc}) to $\Lambda_{\Phi'}$ to bound it by $\|f_j\|_{U^2}$, meaning that in turn $\Lambda_{\Phi}$ is bounded by $\|f_j\|_{U^2}^{1/2}$.  More generally, we can hope to apply Cauchy--Schwarz repeatedly and systematically, eventually arriving at a system with Cauchy--Schwarz complexity $1$.

On the other hand, we show that the quantity $C(\Phi)$, which quantifies the number of times the Cauchy--Schwarz inequality is used, must necessarily grow without bound as $\Phi$ varies.

\begin{theorem}
  \label{thm:negative}
  For any sufficiently large prime $p$, $p \equiv \pm 1 \pmod{8}$, there exist a system $\Phi$ of $6$ linear forms in $3$ variables with $s(\Phi) = 1$, and functions $f_1,\dots,f_6 \colon \ZZ/p\ZZ \to \CC$ with $\|f_i\|_\infty \le 1$ for each $i$, such that
  \[
    \left|\Lambda_\Phi(f_1,\dots,f_6)\right| \ge 10^{-12}
  \]
but
  \[
    \|f_j\|_{U^2} \le p^{-1/8}
  \]
for each $j$.
\end{theorem}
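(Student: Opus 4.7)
The plan is to exhibit explicit functions of the form $f_i(y) = \chi(Q_i(y))$, where $\chi$ is the Legendre symbol on $\ZZ/p\ZZ$ and the $Q_i$ are quadratic polynomials, together with a system $\Phi = (\phi_1, \dots, \phi_6)$ with $s(\Phi) = 1$, such that $\prod_i Q_i(\phi_i(x_1, x_2, x_3))$ equals, up to multiplication by a nonzero quadratic residue, a perfect square in $\FF_p[x_1, x_2, x_3]$. This ansatz matches the target $U^2$-threshold: functions of the shape $\chi \circ Q$ are precisely the simplest natural objects of $U^2$-norm $\approx p^{-1/8}$, so it is reasonable to expect the tight counterexample to take this form.

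Given such a construction, both verifications reduce to standard character-sum estimates. For the $U^2$-norm bound, Weil's theorem gives $\|f_i\|_{U^2}^4 = \EE_{x, h_1, h_2} \chi(R_i(x, h_1, h_2)) = O(p^{-1/2})$ whenever the polynomial $R_i$ of degree at most eight in three variables is not itself a perfect square---a condition straightforward to verify for generic quadratic $Q_i$. Hence $\|f_i\|_{U^2} \le p^{-1/8}$ for $p$ sufficiently large. For $\Lambda_\Phi$, the intended identity $\prod_i Q_i(\phi_i(x)) = c \cdot R(x)^2$ yields $\chi\bigl(\prod_i Q_i(\phi_i(x))\bigr) = \chi(c)$ for all $x$ outside the zero locus of $R$, which has density $O(1/p)$ by a Schwartz--Zippel type bound. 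Taking $c$ to be a nonzero quadratic residue therefore gives $|\Lambda_\Phi| \ge 1 - O(1/p) > 10^{-12}$ for $p$ large.

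The main challenge, and the heart of the proof, is the algebraic construction. The requirement that $\phi_1^2, \dots, \phi_6^2$ be linearly independent in the six-dimensional space of quadratic forms on $\FF_p^3$ forces that no two $\phi_i$ are scalar multiples of one another, which rules out the most naive pairings of the affine factors $\phi_i \pm \sqrt{a_i}$ arising from $Q_i(y) = y^2 - a_i$ that would be needed for $\prod_i Q_i(\phi_i)$ to be a square. Here the hypothesis $p \equiv \pm 1 \pmod 8$, equivalent to $\sqrt 2 \in \FF_p$, becomes essential: it provides additional factorizations over $\FF_p$ (of quadratics such as $y^2 - 2 a^2$) which can be used to build a more subtle pairing pattern compatible with the linear independence of $\{\phi_i^2\}$. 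Concretely, I would search for a construction in which $\phi_1, \phi_2, \phi_3$ form a basis of $(\FF_p^3)^*$ while $\phi_4, \phi_5, \phi_6$ are specific $\FF_p$-linear combinations with coefficients involving $\sqrt 2$, then solve for the parameters of the $Q_i$ so that $\prod_i Q_i(\phi_i) = c R^2$ holds identically in $\FF_p[x_1, x_2, x_3]$. The hardest step will be producing this explicit construction; once the correct structural arrangement is identified, verification reduces to finite algebraic computations and the Weil estimates recalled above.
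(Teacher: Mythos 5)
Your proposal takes a genuinely different route from the paper, but the algebraic construction at its heart does not exist. You want quadratics $Q_1,\dots,Q_6$ so that $\prod_i Q_i(\phi_i(x))$ is a nonzero constant times a perfect square in $\FF_p[x_1,x_2,x_3]$, while $\phi_1^2,\dots,\phi_6^2$ remain linearly independent. The latter forces $[\phi_1],\dots,[\phi_6]$ to be pairwise distinct points of $\PP^2$ (if $[\phi_i]=[\phi_j]$ then $\phi_i^2,\phi_j^2$ are proportional). Now factor each $Q_i(\phi_i)$ into irreducibles in the UFD $\FF_p[x_1,x_2,x_3]$: after completing the square, $Q_i(\phi_i) = (\phi_i + b_i/2)^2 - a_i$, which is either a degree-$2$ irreducible (if $a_i$ is a non-residue), or a product of two affine forms each with leading part $\phi_i$ (if $a_i$ is a residue), or $(\phi_i + b_i/2)^2$ itself (if $a_i = 0$). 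For the total product to be $c R^2$, every irreducible factor must occur with even multiplicity. But two irreducible factors coming from indices $i \ne j$ cannot be associates: comparing leading homogeneous parts forces $\phi_i \propto \phi_j$, i.e.\ $[\phi_i]=[\phi_j]$, contradicting distinctness. So each factor must pair within its own $Q_i(\phi_i)$, which happens only when $a_i=0$, i.e.\ $Q_i$ has a repeated root. But then $Q_i(\phi_i)$ is already a square, so $f_i = \chi\circ Q_i \equiv 1$ off a hyperplane, and $\|f_i\|_{U^2} = 1 - o(1)$, not $\le p^{-1/8}$. (Allowing $\chi\bigl(\prod Q_i(\phi_i)\bigr)$ to be merely \emph{approximately} constant rather than exactly constant does not help: by the Weil/Lang--Weil bounds, $\chi(P(x))$ has mean $O(p^{-1/2})$ whenever $P$ is not a constant times a square.) So the ``more subtle pairing pattern'' you hope $\sqrt 2 \in \FF_p$ will supply cannot exist over $\FF_p[x]$.

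The actual mechanism in the paper evades this obstruction by working outside $\FF_p[x]$ entirely. The functions are supported on a two-dimensional generalized arithmetic progression $X = \{a+b\sqrt 2 : |a|,|b| \le \alpha p^{1/2}\}$, which is Fre\u{\i}man-isomorphic to a box in $\ZZ[\sqrt 2]$; the forms $\phi_i$ are chosen with $\ZZ[\sqrt 2]$-coefficients so that the \emph{norm forms} satisfy $\sum_i (-1)^{i+1} N(\widetilde{\phi_i}) = 0$ identically over $\QQ(\sqrt 2)$, where $N(z) = z\sigma(z)$ and $\sigma$ is Galois conjugation. This is a degree-$2$ relation, but because it uses $\sigma$ it is not an $\FF_p$-polynomial identity in the $\phi_i^2$ and so does not produce a conic through $[\phi_1],\dots,[\phi_6]$ --- the determinant is computed to be $512\sqrt2 \ne 0$, giving $s(\Phi)=1$. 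The function $f(x) = e\bigl(R\,N(a+b\sqrt2)\bigr) 1_X(x)$ then has $\Lambda_\Phi(f,\bar f,\ldots) \ge 10^{-12}$ exactly because of that norm identity, and the $U^2$-norm is controlled by averaging over the real parameter $R$. The point to internalize is that the obstruction in these counterexamples is not a polynomial relation visible over $\FF_p$ (which would either violate $s=1$ or be detectable by Cauchy--Schwarz) but a relation that only becomes visible after restricting to a set carrying richer arithmetic structure.
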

Unlike in Theorem \ref{thm:positive}, here the system $\Psi$ is allowed to change as $p$ grows, with no control on the size of its coefficients.
The condition $p \equiv \pm 1 \pmod{8}$ is an inessential one related to the precise construction used and could be removed without too much added difficulty.

\begin{remark}
This negative result perhaps sheds some light on where the obstructions to a very straightforward proof of Theorem \ref{thm:positive} lie.

The difficulty turns out not to be that the Cauchy--Schwarz inequality is insufficiently powerful, or too blunt to detect the algebraic nature of the boundary between systems with $s=1$ and $s=2$; in fact it handles such considerations surprisingly easily.

Instead, the issue is that the Cauchy--Schwarz steps used must necessarily be tailor-made to the system $\Phi$ being considered.  The task of describing a mapping from systems to Cauchy--Schwarz arguments could be likened to that of building a primitive computer using only the Cauchy--Schwarz inequality.  Setting up the technical machinery required to achieve this will occupy most of the paper. 
\end{remark}

\begin{remark}
  The value of $C(\Phi)$ given by the proof of Theorem \ref{thm:positive} is completely explicit but in many cases unreasonably large. No serious attempt has been made to optimize it, although minor changes would probably produce only minor improvements.

  For large $p$, the worst-case behavior given by the proof is something like $\exp(O(K^{O(1)}))$ where $K$ is the size of the largest (integer) coefficient appearing in $\Phi$.  Although typically one expects not to hit the worst case, nonetheless in practice for integer coefficients of size about $10$ values such as $C \approx 2^{400}$ are not unusual.  It seems likely such values are not best possible.

  When $p$ is fixed, we may state a bound in terms of $p$ rather than the size of the coefficients.  Here the method gives $C(p) = O(p^{O(1)})$.  It is possible one could modify the argument to improve this to $O(\log p)$, which would be best possible up to absolute constants.  However, significant additional technical challenges arise, and so we will not attempt this here.
\end{remark}

\begin{remark}
  The general case of Questions \ref{q:bounds}, \ref{q:elementary} and \ref{q:coefficients}, for $d > 3$ or $r > 6$, remains open.  It seems reasonable to speculate that Theorem \ref{thm:positive} (and Theorem \ref{thm:negative}) have analogues in this level of generality.  There is no immediately apparent obstruction to the overall approach of repeated application of the Cauchy--Schwarz inequality succeeding in general, but conversely it is not obvious how to generalize the specific strategies used when $d=3$ and $r=6$ to the general case.  Therefore, this is left to possible future work.
\end{remark}

\subsection{Outline of the paper}

In Section \ref{sec:63} we present some preliminaries concerning the case of $6$ forms in $3$ variables.  In particular, we will deal with some initial degenerate cases where, in a technical and slightly disingenuous sense, we will see that applying the Cauchy--Schwarz inequality causes $s_{\CS}$ to decrease.  We will need these cases in what follows, but this also serves as an introduction to the general approach behind the proof of Theorem \ref{thm:positive} without the notational complexities.

In Section \ref{sec:formalisms} we introduce formalisms to keep track of the effects of multiple applications of Cauchy--Schwarz in a systematic manner.  This has the effect of reducing the proof of Theorem \ref{thm:positive} in any given instance, to winning a Cauchy--Schwarz ``game'' which has a well-defined set of possible moves and which can readily be simulated on a computer.

Section \ref{sec:hard-bit} addresses the core problem of solving this game in general.  This comes down to finding sequences of moves which have the effect of implementing predictable arithmetic operations on the system $\Phi$, and using them to walk $\Phi$ to a degenerate configuration of the type considered in Section \ref{sec:63}.

Finally, Section \ref{sec:negative} gives the proof of the negative result, Theorem \ref{thm:negative}.

\subsection{Notation}

We use $O(1)$ to denote any quantity bounded above by an absolute constant, and $O(X)$ to mean $O(1) X$.  The notation $[m]$ for $m$ a positive integer denotes the set $\{1,2,\dots,m\}$.  For a real parameter $x$, $e(x)$ denotes $\exp(2 \pi i x)$.  The notation $[A=B]$ (for example) denotes the indicator function of the event $A=B$.  If $W$ is a finite-dimensional vector space over $\FF_p$, we write $W^\ast$ for its dual space and $\PP(W)$ for the corresponding projective space (i.e.~the space of $1$-dimensional subspaces of $W$). Also, $\PP^k = \PP^k(\FF_p)$ means the same as $\PP\big(\FF_p^{k+1}\big)$.  Given $w \in W \setminus \{0\}$, we write $[w]$ for the corresponding element of $\PP(W)$.  If $U \subseteq W$ is a subspace of $W$, we write $U^\perp \subseteq W^\ast$ for the perpendicular subpsace, i.e.\ the set of all $\phi \in W^\ast$ that vanish on $U$.

\subsection{Acknowledgements}

The author would like to thank Sean Eberhard, Ben Green, Rudi Mrazovi\'c and Julia Wolf for discussions on these topics at various times.

\section{Preliminaries concerning six forms in three variables}
\label{sec:63}

We start by giving a brief analysis of the different cases that can arise concerning a system of six forms $\Phi = (\phi_1,\dots,\phi_6)$ in three variables, and the associated Cauchy--Schwarz complexity and true complexity.  Throughout this section we write $V = \FF_p^3$, so $\phi_i$ (modulo $p$) can be thought of as linear functionals $V \to \FF_p$, always assumed to be non-zero.

It is clear that nothing substantial changes when we replace $\phi_i$ by a non-zero scalar multiple $\lambda \phi_i$.  Indeed, the quantities $f_i(\phi_i(v))$ and $f_i(\lambda \phi_i(v))$ are essentially the same, up to replacing $f_i$ with a dilate of itself, and so this has no effect on the conclusion of Theorem \ref{thm:positive}; and by inspection our definitions of true complexity and Cauchy--Schwarz complexity are also unchanged.

Therefore it makes sense to think of the forms $\phi_i \colon V \to \FF_p$ as points $[\phi_i]$ in the projective plane $\PP(V^\ast) \cong \PP^2 (\FF_p)$, quotienting out by the action of scalar multiplication.  This allows us to phrase the different cases geometrically.

We have said that $\Phi$ has true complexity $s=1$ if the symmetric bilinear forms $\phi_1^2,\dots,\phi_6^2 \in (V^\ast)^{\otimes 2}$ are linearly independent.  Note that this space of symmetric bilinear forms on $V$ has dimension $(3\cdot4) / 2 = 6$, and there are six forms, so we expect this to be true generically.  Indeed, a dependence relation on $\phi_1^2,\dots,\phi_6^2$ exists if and only if there is a non-zero linear functional $\mu \colon \{ \sigma \in (V^\ast)^{\otimes 2} \colon \sigma = \sigma^T\} \to \FF_p$ which evaluates to $0$ on each $\phi_i^2$; and this in turn is the same thing as a non-zero quadratic form $V^\ast \to \FF_p$ which vanishes at each $\phi_i$; i.e., a conic in the projective plane $\PP(V^\ast)$ containing $[\phi_i]$ for each $i$.\footnote{Note that this argument is still valid as stated when $p=2$.}

In other words, we have shown the following.
\begin{slogan}
  \label{slogan:six-forms}
  Six forms $\phi_1,\dots,\phi_6$ on $\FF_p^3$ have true complexity at least $2$, if and only if $[\phi_1],\dots,[\phi_6]$ all lie on a \uppar{possibly degenerate} conic in $\PP(V^\ast)$.
\end{slogan}
By a degenerate conic, we mean the union of two lines.  In particular, if $[\phi_1],[\phi_2],[\phi_3]$ are collinear and so are $[\phi_4],[\phi_5],[\phi_6]$, then this system has true complexity at least $2$.

It is possible for the true complexity to be greater than $2$: for instance, if five of the points lie on a line in $\PP^2$, in which case the true complexity is $3$; or if $[\phi_i] = [\phi_j]$ for some $i \ne j$, in which case the system has infinite complexity.

However, all such cases may be fully analyzed in terms of Cauchy--Schwarz complexity, which gives a bound $|\Lambda_\Phi(f_1,\dots,f_6)| \le \|f_j\|_{U^{s_j+1}}$ for each $j$ where the values $s_j$ are best possible, even when they vary with $j$.  The details are an uninteresting check that will not be relevant to the argument, so are omitted.

%

We therefore restrict our attention to the cases with $s=1$.  In particular, we can henceforth make the following assumptions:
\begin{enumerate}[label=(\roman*)]
  \item the points $[\phi_1],\dots,[\phi_6]$ are all distinct;
  \item no four are collinear; and
  \item if some three of the points are collinear, the remaining three are \emph{not} collinear.
\end{enumerate}

It is clear that if the six forms are in general position, meaning no three are collinear, then $s_{\CS} = 2$.  Indeed, any way we partition all but one of the forms into two classes, one of the classes will contain three forms and so their span will be all of $V^\ast$; hence $s_{\CS} > 1$.  Conversely any $2-2-1$ split achieves $s_{\CS} \le 2$.

Our remaining task in this section is to consider the case where (i)--(iii) hold, but nonetheless $[\phi_1],\dots,[\phi_6]$ are not in general position.  This is a setting in which Cauchy--Schwarz complexity has some purchase, but nonetheless there is a subtlety meaning, technically speaking, that typically $s_{\CS} \ne 1$.

\begin{proposition}
  \label{prop:63-trivial-cs}
  Suppose throughout that a system of forms $\Phi = (\phi_1,\dots,\phi_6)$ on $\FF_p^3$ is given, with no four of $[\phi_1],\dots,[\phi_6]$ collinear and no two the same.
  \begin{enumerate}[label=(\roman*)]
    \item Suppose that $[\phi_1],[\phi_2],[\phi_3]$ are collinear but $[\phi_4],[\phi_5],[\phi_6]$ are not collinear.  Then for functions $f_1,\dots,f_6 \colon \FF_p^n \to \CC$, with $\|f_i\|_\infty \le 1$ for each $i$, and for any $j=4,5,6$ we have a bound
  \[
    \left| \Lambda_\Phi(f_1,\dots,f_6) \right| \le \|f_j\|_{U^2}
  \]
coming from Proposition \ref{prop:csc}.
    \item Under the same conditions as (i), the system has true complexity $s = 1$.  In particular, by results of Gowers and Wolf, $|\Lambda_\Phi(f_1,\dots,f_6)|$ is bounded in terms of $\|f_j\|_{U^2}$ for $j=1,2,3$ \uppar{at least for $n=1$ or $p$ fixed}.
    \item Now suppose further that $[\phi_1],[\phi_2],[\phi_3]$ is the only collinear triple. Then for $j=1,2,3$ there is no way to partition $\{1,\dots,6\} \setminus \{j\}$ into two pieces such that $\phi_j$ is in the span of neither piece, and hence $s_{\CS} = 2$ for this system.
  \end{enumerate}
\end{proposition}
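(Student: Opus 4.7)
For part (i), the plan is to exhibit, for each $j \in \{4,5,6\}$, an explicit two-class partition of $\{1,\dots,6\} \setminus \{j\}$ witnessing Cauchy--Schwarz complexity $\le 1$ at $j$, then invoke Proposition \ref{prop:csc}. The natural choice for $j=4$ is $I_1 = \{1,2,3\}$, $I_2 = \{5,6\}$: the span $\spn(I_1)$ corresponds to the line through $[\phi_1],[\phi_2],[\phi_3]$ in $\PP(V^\ast)$, which $[\phi_4]$ avoids by the no-four-collinear hypothesis; and $\spn(I_2)$ is the line through $[\phi_5],[\phi_6]$, which $[\phi_4]$ avoids since $[\phi_4],[\phi_5],[\phi_6]$ are not collinear. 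The cases $j=5,6$ are symmetric.

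For part (ii), by Slogan \ref{slogan:six-forms} it suffices to rule out all six points lying on a (possibly degenerate) conic $C$ in $\PP(V^\ast)$ (noting $s \ge 1$ is automatic since six forms cannot be linearly independent in the $3$-dimensional space $V^\ast$). If $C$ is smooth it meets the line through the collinear triple in at most two points, impossible. If $C$ is a single point, or is contained in a single line (the double-line case), it cannot hold six distinct points respecting no-four-collinear. The remaining case is $C = L_1 \cup L_2$ with distinct lines meeting at a point; letting $a$, $b$, $c$ count the points on $L_1$ only, $L_2$ only, and on $L_1 \cap L_2$ respectively, no-four-collinear gives $a+c \le 3$ and $b+c \le 3$, while $a+b+c=6$ forces $c=0$ and $a=b=3$. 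The line through the collinear triple then equals $L_1$ or $L_2$, say $L_1$; the remaining points $[\phi_4],[\phi_5],[\phi_6]$ lie on $L_2$ and are collinear, contradicting the hypothesis.

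For part (iii), I would argue by contradiction; fix $j=1$ (the other cases being symmetric) and suppose $\{2,3,4,5,6\} = I_1 \sqcup I_2$ has $\phi_1 \notin \spn(I_k)$ for $k=1,2$. If $\{2,3\} \subseteq I_k$ for some $k$, its span contains $\spn(\phi_2,\phi_3) \ni \phi_1$, a contradiction. So separate them: $I_1 = \{2\} \cup A$, $I_2 = \{3\} \cup B$, $A \sqcup B = \{4,5,6\}$. For $\phi_1$ to avoid both spans, each $\spn(I_k)$ must be a proper subspace of $V^\ast$, so the points indexed by $I_k$ must be collinear in $\PP(V^\ast)$. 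Since $|A|+|B|=3$ we may assume $|A|\ge 2$; then $\{2\}\cup A$ contains a triple $\{2,a,a'\}$ with $a,a'\in\{4,5,6\}$ that would then be collinear, contradicting the uniqueness of the collinear triple $\{1,2,3\}$. To finish the claim $s_{\CS}=2$ rather than larger, I would exhibit the three-class partition $I_1=\{2,4\}$, $I_2=\{3,5\}$, $I_3=\{6\}$ and check $\phi_1 \notin \spn(I_k)$ for each $k$; each check reduces, via no-four-collinear, to verifying that $[\phi_1]$ does not lie on a specified line through two of the remaining $[\phi_i]$.

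I anticipate no conceptual obstacle: the proposition is routine projective-plane bookkeeping given the hypotheses. The step requiring most care is the degenerate-conic enumeration in (ii), where one has to ensure all sub-cases (smooth, single point, double line, pair of distinct lines) are addressed.
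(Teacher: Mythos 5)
Your proposal is correct and follows essentially the same route as the paper: part (i) uses the same $\{1,2,3\}$ / $\{\text{two remaining}\}$ partition with Proposition~\ref{prop:csc}; part (ii) invokes Slogan~\ref{slogan:six-forms} and rules out a degenerate conic; part (iii) shows any two-class partition has a piece whose span is all of $V^\ast$. You are slightly more thorough than the paper in two places --- the explicit enumeration of degenerate conic types and the incidence count in (ii), and the exhibited three-class partition $\{2,4\},\{3,5\},\{6\}$ verifying $s_{\CS}\le 2$ in (iii) --- both of which the paper leaves implicit; these additions are sound and fill in small gaps left to the reader.
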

\begin{proof}
  For (i), say when $j=6$, we can partition $\{1,\dots,5\}$ into $\{1,2,3\}$ and $\{4,5\}$.  By our assumptions, it is clear that $\phi_6$ is in neither $\spn(\phi_1,\phi_2,\phi_3) = \spn(\phi_1,\phi_2) \subseteq V^\ast$ nor $\spn(\phi_4,\phi_5) \subseteq V^\ast$, and so the bound indeed follows from Proposition \ref{prop:csc}.  The other choices of $j$ are analogous.

  For (ii), we note that a conic containing three distinct collinear points must be degenerate, but $[\phi_1],\dots,[\phi_6]$ are not contained in the union of any two lines.  Hence the points do not lie on a conic, and so $s=1$.

  For (iii), when say $j=1$, given any partition of $\{2,\dots,6\}$ into two pieces, one of the pieces contains three of the forms.  Since that triple is not $\phi_1,\phi_2,\phi_3$, they are not collinear and so their span is all of $V^\ast$.
\end{proof}

So, this is a case where $s$ and $s_{\CS}$ differ, albeit for what feels like a bad reason.  Indeed, it is not too challenging to recover a good bound on $\Lambda_\Phi(f_1,\dots,f_6)$ in terms of $\|f_1\|_{U^2}$ in this setting, for instance by decomposing $f_6$ into two parts corresponding to its large and small Fourier coefficients, bounding away the uniform contribution and treating what is left as essentially a system of five forms.

Instead, we will now recover such a bound purely by using the Cauchy--Schwarz inequality, and thereby provide the first (admittedly unimpressive) instantiation of Slogan \ref{slogan:cs}.

\begin{proposition}
  \label{prop:skew-collinear}
  Let $\Phi = (\phi_1,\dots,\phi_6)$ be a system of linear forms on $\FF_p^3$, such that
    no four of $[\phi_1],\dots,[\phi_6]$ are collinear and no two are the same;
    $[\phi_1],[\phi_2],[\phi_3]$ are collinear;
    and $[\phi_4],[\phi_5],[\phi_6]$ are not collinear.

  Then for functions $f_1,\dots,f_6 \colon \FF_p^n \to \CC$, with $\|f_i\|_\infty \le 1$ for each $i$, we have a bound
  \[
    \left| \Lambda_\Phi(f_1,\dots,f_6) \right| \le \|f_1\|_{U^2}^{1/2} \, .
  \]
\end{proposition}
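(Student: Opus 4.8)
The plan is to deduce the bound from a single application of the Cauchy--Schwarz inequality, after which the ``doubled'' system has Cauchy--Schwarz complexity one at the relevant index, so that Proposition \ref{prop:csc} finishes the job; this is exactly the mechanism of Slogan \ref{slogan:cs}.

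First I would normalize. Since $[\phi_4],[\phi_5],[\phi_6]$ are not collinear, $\phi_4,\phi_5,\phi_6$ form a basis of $V^\ast$, so after a linear change of variables $\phi_4=x_1$, $\phi_5=x_2$, $\phi_6=x_3$ and $\phi_i=a_ix_1+b_ix_2+c_ix_3$ for $i=1,2,3$; rescaling $\phi_1,\phi_2,\phi_3$ (which only dilates $f_1,f_2,f_3$ and affects neither hypotheses nor conclusion), the collinearity of the distinct points $[\phi_1],[\phi_2],[\phi_3]$ lets us assume further that $\phi_3=\phi_1+\phi_2$. Now apply Cauchy--Schwarz singling out $f_6$: from $\Lambda_\Phi=\EEE_{x_3}\bigl[f_6(x_3)\,\EEE_{x_1,x_2}\prod_{i\ne 6}f_i(\phi_i)\bigr]$ and $\|f_6\|_\infty\le 1$ one gets $|\Lambda_\Phi|^2\le\Lambda_{\Phi'}$, where $\Phi'$ is the system in the five variables $x_1,x_2,x_3,x_1',x_2'$ with the $10$ forms $\phi_i^{(1)}:=\phi_i(x_1,x_2,x_3)$ and $\phi_i^{(2)}:=\phi_i(x_1',x_2',x_3)$, $i=1,\dots,5$ (the two copies share the variable $x_3$), carrying the functions $f_i$ and $\overline{f_i}$. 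It then suffices to show that $\Phi'$ has Cauchy--Schwarz complexity at most $1$ at the index of $\phi_1^{(1)}$, since Proposition \ref{prop:csc} would give $|\Lambda_{\Phi'}|\le\|f_1\|_{U^2}$ and hence $|\Lambda_\Phi|\le\|f_1\|_{U^2}^{1/2}$.

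That amounts to splitting the remaining nine forms into two classes, neither of whose $\FF_p$-span contains $\phi_1^{(1)}$; one checks that
\[
  I_1=\{\phi_1^{(2)},\phi_2^{(2)},\phi_3^{(2)},\phi_3^{(1)},\phi_4^{(1)}\},\qquad
  I_2=\{\phi_2^{(1)},\phi_4^{(2)},\phi_5^{(1)},\phi_5^{(2)}\}
\]
works. For $I_2$, the forms $\phi_4^{(2)}=x_1'$, $\phi_5^{(1)}=x_2$, $\phi_5^{(2)}=x_2'$ contribute only those coordinate directions, so $\phi_1^{(1)}=a_1x_1+b_1x_2+c_1x_3$ can lie in $\spn(I_2)$ only if $(a_1,c_1)$ is proportional to $(a_2,c_2)$, i.e.\ only if $[\phi_1],[\phi_2],[\phi_5]$ are collinear --- which is excluded. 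For $I_1$, since $\phi_3^{(2)}=\phi_1^{(2)}+\phi_2^{(2)}$ we may discard it; looking at the $x_1',x_2'$-coordinates and using that $(a_1,b_1),(a_2,b_2)$ are linearly independent (else $[\phi_1],[\phi_2],[\phi_6]$ collinear) forces the coefficients of $\phi_1^{(2)}$ and $\phi_2^{(2)}$ in any expression of $\phi_1^{(1)}$ to vanish; looking at the $x_2,x_3$-coordinates then forces $(b_1,c_1)$ proportional to $(b_3,c_3)$, i.e.\ $[\phi_1],[\phi_3],[\phi_4]$ collinear --- again excluded. So both spans miss $\phi_1^{(1)}$. (Degenerate possibilities such as $a_2=0$ or $b_3=0$ cause no trouble, being subsumed in these ``not proportional'' statements.)

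The main obstacle is precisely this last step. That a single Cauchy--Schwarz step should suffice is clear in advance; what is not obvious is the right partition --- the naive ``first copies versus second copies'' split fails, and some experimentation is needed to locate one that works --- though verifying a candidate, as above, is routine linear algebra. The guiding structural fact while searching is that the collinearity of $[\phi_1],[\phi_2],[\phi_3]$ means $\phi_1^{(1)}=\phi_3^{(1)}-\phi_2^{(1)}$, so $\phi_2^{(1)}$ and $\phi_3^{(1)}$ must go into different classes; this collinearity-forced obstruction turns out to be the only essential one, every other potential obstruction being killed by the ``no four collinear'' hypothesis, and I would expect that principle to guide any extension of the argument.
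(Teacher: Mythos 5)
Your proposal is correct and takes essentially the same route as the paper's own proof: a single Cauchy--Schwarz in the variable dual to $\phi_6$, followed by Proposition \ref{prop:csc} applied to the doubled ten-form system via an explicit $5$--$4$ partition of the remaining forms, with the span conditions verified by reducing them to collinearities excluded by the ``no four collinear'' and non-coincidence hypotheses. The only difference is cosmetic: your partition swaps the roles of $\phi_2^{(1)}$ and $\phi_3^{(1)}$ relative to the paper's choice ($S=\{2_0,4_0,1_1,2_1,3_1\}$, $T=\{3_0,5_0,4_1,5_1\}$ in its notation), so the excluded collinear triples invoked are slightly different but equally covered by the hypotheses.
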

\begin{proof}
  By applying a suitable change of basis to $V = \FF_p^3$, we may assume without loss of generality that $\phi_6(x,y,z) = x$;  this is not essential but eases the notation.  So,
  \[
    \Lambda_\Phi(f_1,\dots,f_6) = \EEE_{x \in \FF_p} f_6(x) \EEE_{y,z \in \FF_p} f_1(\phi_1(x,y,z)) \dots f_5(\phi_5(x,y,z)) .
  \]
  We can apply the Cauchy--Schwarz inequality to obtain
  \[
    \left|\Lambda_\Phi(f_1,\dots,f_6)\right| \le \left(\EEE_x |f_6(x)|^2 \right)^{1/2} \left( \EEE_x \left| \EEE_{y,z} f_1(\phi_1(x,y,z)) \dots f_5(\phi_5(x,y,z)) \right|^2 \right)^{1/2} .
  \]
  Now, the term on the right expands to
  \[
    \EEE_{x,y,y',z,z'} f_1(\phi_1(x,y,z)) \dots f_5(\phi_5(x,y,z)) \overline{f_1(\phi_1(x,y',z')) \dots f_5(\phi_5(x,y',z'))}
  \]
  and we can think of this as $\Lambda_{\Phi'}(f_1,\dots,f_5,\overline{f_1},\dots,\overline{f_5})$ where $\Phi'$ is the system of $10$ forms in the five variables $x,y,y',z,z'$, given by $\phi_{i_0}(x,y,y',z,z') = \phi_i(x,y,z)$ and $\phi_{i_1}(x,y,y',z,z') = \phi_i(x,y',z')$ for $1 \le i \le 5$, each thought of as a linear functional $\FF_p^5 \to \FF_p$.

  We claim that, under our hypotheses, it is possible to partition the nine forms $\phi_{2_0}$, $\phi_{3_0}$, $\dots$, $\phi_{5_0}$, $\phi_{1_1}$, $\dots$, $\phi_{5_1}$ into two classes such that $\phi_{1_0}$ is not in the span of either class.  Specifically, we will take
  \begin{align*}
    S &= \{ 2_0, 4_0, 1_1, 2_1, 3_1 \} \\
    T &= \{ 3_0, 5_0, 4_1, 5_1 \}
  \end{align*}
  to be the sets of indices in each class.  If this claim holds, then by the standard Cauchy--Schwarz complexity bound (Proposition \ref{prop:csc}) again we have
  \[
    \left| \Lambda_{\Phi'}(f_1,\dots,f_5,\overline{f_1},\dots,\overline{f_5}) \right| \le \|f_1\|_{U^2}
  \]
  and the result follows.

  We now verify the claim.  We first note that, since $[\phi_1],[\phi_2],[\phi_3]$ are collinear by hypothesis,
  \[
    \spn(\phi_{2_0}, \phi_{4_0}, \phi_{1_1}, \phi_{2_1}, \phi_{3_1}) = \spn(\phi_{2_0}, \phi_{4_0}, \phi_{1_1}, \phi_{2_1})
  \]
as $\phi_{3_1} \in \spn(\phi_{1_1}, \phi_{2_1})$.
  This makes the claim plausible for dimension reasons: it is reasonable to expect the span of four linear forms on $\FF_p^5$ not to contain a fifth, unless something untoward happens.  However, something untoward could genuinely happen if too many of the original forms are collinear, and more generally we need to show that all bad cases are ruled out by our hypotheses.  This is the technical part of the calculation, and may be skipped on first (or subsequent) reading.

  Recall $\phi_6(x,y,z) = x$ and write $\phi_i(x,y,z) = a_i x + b_i y + c_i z$ for $1 \le i \le 5$.
  To show $\phi_{1_0}$ is not in the span of $\phi_{2_0},\phi_{4_0},\phi_{1_1},\phi_{2_1}$, it would suffice to show that $\phi_{1_0}$ together with the other four form a basis for $\left(\FF_p^5\right)^\ast$; equivalently, that the matrix
  \[
    M_S = \begin{pmatrix}
      a_1 & b_1 & 0 & c_1 & 0 \\
      a_2 & b_2 & 0 & c_2 & 0 \\
      a_4 & b_4 & 0 & c_4 & 0 \\
      a_1 & 0 & b_1 & 0 & c_1 \\
      a_2 & 0 & b_2 & 0 & c_2
    \end{pmatrix}
  \]
  (whose columns correspond to $x,y,y',z,z'$ respectively) is non-singular.  However, it is not hard to see that
  \begin{align*}
    \det M_S &= \pm \det \begin{pmatrix} 
      a_1 & b_1 & c_1 \\
      a_2 & b_2 & c_2 \\
      a_4 & b_4 & c_4
    \end{pmatrix}\ \det \begin{pmatrix}
      b_1 & c_1 \\
      b_2 & c_2
    \end{pmatrix} \\
    &= 
    \pm \det \begin{pmatrix} 
      a_1 & b_1 & c_1 \\
      a_2 & b_2 & c_2 \\
      a_4 & b_4 & c_4
    \end{pmatrix}\ \det \begin{pmatrix}
      1 & 0 & 0 \\
      a_1 & b_1 & c_1 \\
      a_2 & b_2 & c_2
    \end{pmatrix}\, .
  \end{align*}
  The determinants on the right hand side are zero precisely when, respectively, $[\phi_1],[\phi_2],[\phi_4]$ or $[\phi_6], [\phi_1],[\phi_2]$ are collinear.  Under our assumptions, neither can be true (as then four points would lie on a line) and so $M_S$ is non-singular.

  The argument for $T$ is very similar.  We define
  \[
    M_T = \begin{pmatrix}
      a_1 & b_1 & 0 & c_1 & 0 \\
      a_3 & b_3 & 0 & c_3 & 0 \\
      a_5 & b_5 & 0 & c_5 & 0 \\
      a_4 & 0 & b_4 & 0 & c_4 \\
      a_5 & 0 & b_5 & 0 & c_5
    \end{pmatrix}
  \]
  which is non-singular if and only if $\phi_{1_0},\phi_{3_0},\phi_{5_0},\phi_{4_1},\phi_{5_1}$ form a basis.  Then
  \[
    \det M_T = 
    \pm \det \begin{pmatrix} 
      a_1 & b_1 & c_1 \\
      a_3 & b_3 & c_3 \\
      a_5 & b_5 & c_5
    \end{pmatrix}\ \det \begin{pmatrix}
      1 & 0 & 0 \\
      a_4 & b_4 & c_4 \\
      a_5 & b_5 & c_5
    \end{pmatrix}
  \]
  and again this is zero if and only if either $[\phi_1],[\phi_3],[\phi_5]$ or $[\phi_6],[\phi_4],[\phi_5]$ are collinear.  Again, both of these are explicitly ruled out by our hypotheses, and this proves the claim.
\end{proof}

\begin{remark}
  One way to think of this proof on a high level is as a combinatorial analogue of the method we sketched above: namely, first observing that $\Lambda_{\Phi}$ is controlled by $\|f_6\|_{U^2}$, then noting this allows us to essentially eliminate $f_6$ by replacing it with the sum of its large Fourier coefficients, and finally applying Cauchy--Schwarz on the remaining five forms.

  What we do here is first make two copies of the original system, joined by $\phi_6$; on the right, we decompose the remaining forms as if we were attempting to prove a Cauchy--Schwarz complexity bound in $\|f_6\|_{U^2}$, as in Proposition \ref{prop:63-trivial-cs}; and on the left we decompose as if we were  tring to prove a Cauchy--Schwarz complexity bound in $\|f_1\|_{U^2}$ and the form $\phi_6$ didn't exist.

  So, the initial Cauchy--Schwarz allows us to somehow substitute the information gained from the former argument into the latter.
\end{remark}

\begin{remark}
  As we have said, this is an application of Slogan \ref{slogan:cs}, but not a very convincing one.  Before embarking on the programme in full generality, we briefly sketch an example of six forms in general position, having $s=1$ (but necessarily $s_{\CS} = 2$), where we nonetheless get a bound using only Cauchy--Schwarz.

  Consider the forms
  \begin{align*}
    \phi_1(x,y,z) &= x &
    \phi_4(x,y,z) &= x+y+z \\
    \phi_2(x,y,z) &= y &
    \phi_5(x,y,z) &= 2x+3y+5z \\
    \phi_3(x,y,z) &= z &
    \phi_6(x,y,z) &= ax+by+cz
  \end{align*}
  where $a,b,c \in \FF_p$ are arbitrary subject to the condition that the forms be in general position.  For concreteness one could substitute $a=7$, $b=11$, $c=13$.

  We can apply Cauchy--Schwarz to $\phi_1,\dots,\phi_6$ twice as follows:
  \begin{align*}
    &\left| \EEE_{x,y,z} f_1(x) f_2(y) f_3(z) f_4(x+y+z) f_5(2x+3y+5z) f_6(ax+by+cz) \right|  \\
    \ &= \left| \EEE_x f_1(x) \EEE_{y,z} f_2(y) f_3(z) f_4(x+y+z) f_5(2x+3y+5z) f_6(ax+by+cz) \right| \\ 
    \ &\le \left( \EEE_x |f_1(x)|^2 \right)^{1/2} \bigg( \EEE_{x,y_0,y_1,z_0,z_1} \begin{aligned}[t] & f_2(y_0) f_3(z_0) f_4(x+y_0+z_0) f_5(2x+3y_0+5z_0) f_6(ax_0+by_0+cz_0) \\ & \overline{f_2(y_1) f_3(z_1) f_4(x+y_1+z_1) f_5(2x+3y_1+5z_1) f_6(ax+by_1+cz_1)} \bigg)^{1/2} \end{aligned}
  \end{align*}
  and then again
  \begin{align*}
    &\EEE_{y_0,y_1} f_2(y_0) \overline{f_2(y_1)} \EEE_{x,z_0,z_1} \begin{aligned}[t] & f_3(z_0) f_4(x+y_0+z_0) f_5(2x+3y_0+5z_0) f_6(ax+by_0+cz_0) \\ & \overline{f_3(z_1) f_4(x+y_1+z_1) f_5(2x+3y_1+5z_1) f_6(ax+by_1+cz_1)} \end{aligned} \\
      \ &\le \left( \EEE_{y_0,y_1} |f_2(y_0)|^2 |f_2(y_1)|^2 \right)^{1/2} \bigg( \EEE_{\substack{y_0,y_1,\\x_0,x_1,\\z_{00},z_{01},\\z_{10},z_{11}}} \begin{aligned}[t] & f_3(z_{00}) f_4(x_0+y_0+z_{00}) f_5(2x_0+3y_0+5z_{00}) f_6(ax_0+by_0+cz_{00}) \\ & \overline{f_3(z_{10}) f_4(x_0+y_1+z_{10}) f_5(2x_0+3y_1+5z_{10}) f_6(ax_0+by_1+cz_{10})} \\ & \overline{f_3(z_{01}) f_4(x_1+y_0+z_{01}) f_5(2x_1+3y_0+5z_{01}) f_6(ax_1+by_0+cz_{01})} \\ & f_3(z_{11}) f_4(x_1+y_1+z_{11}) f_5(2x_1+3y_1+5z_{11}) f_6(ax_1+by_1+cz_{11}) \bigg)^{1/2} . \end{aligned}
  \end{align*}
  We now claim that this last system of $16$ linear forms in $8$ variables has Cauchy--Schwarz complexity $1$ with respect to $f_3(z_{00})$, if and only if the original system has true complexity $1$.  That is, we can partition the remaining forms into two classes:
  \begin{align*}
    S &= \big \{ x_0+y_0+z_{00},\, 2x_0+3y_0+5z_{00},\, z_{10},\, x_0+y_1+z_{10},\, z_{01},\, x_1+y_0+z_{01},\, z_{11},\, 2x_1+3y_1+5z_{11} \big\} \\
    T &= \big \{ \begin{aligned}[t]  ax_0+&by_0+cz_{00},\, 2x_0+3y_1+5z_{10},\, ax_0+by_1+cz_{10},\, 2x_1+3y_0+5z_{01}, \\ & ax_1+by_0+cz_{01},\, x_1+y_1+z_1,\, ax_1+by_1+cz_{11} \big\} \end{aligned} 
  \end{align*}
  such that $z_{00}$ lies in the span of one of the classes, if and only if $[\phi_1],\dots,[\phi_6]$ lie on a conic.  Verifying this claim is left as an exercise for the interested reader.  We stress that for particular choices of $a,b,c$ this is an elementary finite computation.

  The fact that these kinds of linear algebra conditions can detect whether the points lie on a conic should perhaps not be surprising in light of Pascal's hexagon theorem.
\end{remark}

\section{Formalisms for iterated Cauchy--Schwarz}
\label{sec:formalisms}

The purpose of this section is to introduce some formalisms necessary to keep track of what happens when we apply the Cauchy--Schwarz inequality repeatedly.  The notational overhead here is high, but preferable to handling yet larger explicit calculations in the style of the previous section.

\subsection{Linear data}

Although the central objects of study are systems of linear forms, it will be convenient to use a natural generalization of this notion, which handles the objects that arise in intermediate stages of the calculation.  We introduce the relevant definitions now.

\begin{definition}
  Let a prime $p$ be fixed.  By a \emph{linear datum}, we mean a tuple $\Psi = \left(V, (W_i)_{i \in I}, (\psi_i)_{i\in I}\right)$, where $I$ is some finite index set, and
  \begin{itemize}
    \item $V$ and $W_i$ for $i \in I$ are finite-dimensional vector spaces over $\FF_p$;
    \item $\psi_i \colon V \to W_i$ are surjective linear maps.
  \end{itemize}

  Given a positive integer $n$, we abuse notation to write $\psi_i \colon V^n \to W_i^n$ for the map that applies $\psi_i$ to each coordinate.  Now, for a collection of functions $(f_i)_{i \in I}$ with $f_i \colon W_i^n \to \CC$, we define
  \[
    \Lambda_{\Psi}((f_i)_{i \in I}) = \EEE_{v \in V^n} \prod_{i \in I} f_i(\psi_i(v)) \, .
  \]
\end{definition}
It is clear that in the special case that $\dim W_i = 1$ for each $i$, this is essentially the same information as a system of linear forms on $V \cong \FF_p^d$ for some $d$.  The reader should always imagine $\dim V$ as being small, even when we are working over $G = \FF_p^n$ for some large $n$: the $n$ is taken care of in the definition of $\Lambda_\Psi$, not of $\Psi$.

Attempting to analyse linear data in general exposes hard problems; see \cite{austin1,austin2}. Since the linear data we will consider ultimately come from systems of linear forms, these subtleties will not arise here.

\begin{remark}
  Typically we are not too concerned by replacing $W_i$ by isomorphic vector spaces, or by the exact form of the linear map $\psi_i$: for instance, as we have said the difference between $f(\psi_i(v))$ and $f(2\psi_i(v))$ is usually immaterial.

  As such, the only really important information is the collection of subspaces $\ker \psi_i$ of $V$, as we can always recover $W_i$ up to isomorphism as $V / \ker \psi_i$.  One can interpret $\ker \psi_i$ as the subspace of $V$ that the function $f_i$ \emph{cannot} depend on.

  Alternatively, we could think about the perpendicular subspaces $(\ker \psi_i)^\perp \subseteq V^\ast$, corresponding to the span of all linear forms derived from $\psi_i$.  This is consistent with the geometric picture from Section \ref{sec:63}: such subspaces correspond to points, lines, planes etc.~in $\PP(V^\ast)$.

  For technical reasons it is useful to keep track of the linear maps $\psi_i$ explicitly; but the reader will rarely lose anything, and possibly gain something, by thinking of a linear datum as simply a collection of subspaces of $V$ or $V^\ast$.
\end{remark}

We need some notion of when one linear datum bounds another; for instance, but not exclusively, because one is obtained by applying the Cauchy--Schwarz inequality to the other.

\begin{definition}
  \label{def:dominate}
  Suppose we have two linear data $\Psi = \left(V, (W_i)_{i \in I}, (\psi_i)_{i \in I}\right)$ and $\Psi' = \left(V', (W'_i)_{i\in I'}, (\psi'_i)_{i \in I'}\right)$.  Suppose further that for some pair $j \in I$, $j' \in I'$ the subspaces $W'_{j'} = W_{j}$ are identified.  Finally, let $c > 0$ be a positive real number.

  We say \emph{$\Psi'$ dominates $\Psi$ respecting $(j,j')$ with exponent $c$} if the following holds: for $n \ge 1$ and any collection of functions $(f_i)_{i \in I}$, $f_i \colon W_i^n \to \CC$ with $\|f_i\|_\infty \le 1$, there exist functions $(g_i)_{i \in I'}$, $g_i \colon {W'_i}^n \to \CC$, $\|g_i\|_\infty \le 1$, such that $g_{j'} = f_{j}$, and
  \[
    \left| \Lambda_\Psi((f_i)_{i \in I}) \right| \le \left| \Lambda_{\Psi'}((g_i)_{i \in I'}) \right|^{c} .
  \]
\end{definition}
It is clear that domination is transitive: if $\Psi'$ dominates $\Psi$ respecting $(j,j')$ with exponent $c$, and $\Psi''$ dominates $\Psi'$ respecting $(j',j'')$ with exponent $c'$, then $\Psi''$ dominates $\Psi$ respecting $(j,j'')$ with exponent $cc'$.

Some straightforward examples of domination include (i) replacing $\Psi$ by an isomorphic system (i.e.~reparameterizing); (ii)   augmenting $\Psi$ by introducing further averaging, or by replacing $\ker \psi_i$ by a strictly larger subspace for some $i$; or (iii) taking a supremum over some part of the average.  All of these are subsumed in the following general proposition.

\begin{proposition}
  \label{prop:trivial-dominate}
  Suppose $\Psi = \left(V, (W_i)_{i \in I}, (\psi_i)_{i \in I}\right)$ and $\Psi' = \left(V', (W'_i)_{i\in I}, (\psi'_i)_{i \in I}\right)$ are two linear data on the same index set $I$, and that we are given linear maps $\theta \colon V' \to V$ and $\sigma_i \colon W_i' \to W_i$ such that $\psi_i \circ \theta = \sigma_i \circ \psi'_i$ for each $i \in I$ \uppar{i.e.~a morphism of linear data}.  If $j \in I$ is some index such that $W_j = W'_j$ and $\sigma_j$ is the identity, then $\Psi'$ dominates $\Psi$ respecting $(j,j)$, with exponent $1$.
\end{proposition}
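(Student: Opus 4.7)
The plan is to establish the inequality with equality in the case $\theta$ is surjective, and then to reduce the general case by an averaging argument; the hypothesis $\sigma_j = \id$ enters when choosing this reduction compatibly with the constraint $g_j = f_j$. The natural candidate is $g_i = f_i \circ \sigma_i^{\otimes n}$, and when $\theta$ is surjective this gives exact equality $\Lambda_{\Psi'}((g_i)) = \Lambda_\Psi((f_i))$ via the change of variables $v = \theta(v')$.

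First I would write $U = \im \theta \subseteq V$ and observe that, because $\sigma_j = \id$ and $\psi'_j$ is surjective, $\psi_j(U) = \psi_j(\theta(V')) = \sigma_j(\psi'_j(V')) = W_j$, so $U + \ker \psi_j = V$. This lets me choose a subspace $U^c \subseteq \ker \psi_j$ complementary to $U$ in $V$: take $U^c$ to be any complement of $U \cap \ker \psi_j$ within $\ker \psi_j$, and a short dimension count confirms $V = U \oplus U^c$.

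Given this decomposition, writing $v \in V^n$ as $v = v_1 + v_2$ with $v_1 \in U^n$ and $v_2 \in (U^c)^n$ lets me split
\[
\Lambda_\Psi((f_i)) = \EEE_{v_2 \in (U^c)^n} \, \EEE_{v_1 \in U^n} \prod_{i \in I} f_i\big(\psi_i(v_1) + \psi_i(v_2)\big) \, .
\]
By the triangle inequality some $v_2$ makes the absolute value of the inner average at least $|\Lambda_\Psi((f_i))|$; I would fix such a $v_2$ and set $\hat f_i(w) = f_i(w + \psi_i(v_2))$. These translates satisfy $\|\hat f_i\|_\infty \le 1$, and crucially $\hat f_j = f_j$ because $v_2 \in (U^c)^n \subseteq (\ker \psi_j)^n$.

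Finally I would define $g_i := \hat f_i \circ \sigma_i$ (applying $\sigma_i$ coordinatewise on ${W'_i}^n$). Then $\|g_i\|_\infty \le 1$, and for $i = j$ the identity $\sigma_j = \id$ gives $g_j = \hat f_j = f_j$. Using $\sigma_i \psi'_i = \psi_i \theta$ together with the fact that $\theta$ is surjective onto $U$ by construction, so that $\theta^{\otimes n}$ pushes the uniform measure on $(V')^n$ forward to the uniform measure on $U^n$, one gets
\[
\Lambda_{\Psi'}((g_i)) = \EEE_{v' \in (V')^n} \prod_i \hat f_i\big(\sigma_i \psi'_i(v')\big) = \EEE_{v' \in (V')^n} \prod_i \hat f_i\big(\psi_i \theta(v')\big) = \EEE_{v_1 \in U^n} \prod_i \hat f_i(\psi_i(v_1)) \, ,
\]
and combining with the earlier bound yields $|\Lambda_\Psi((f_i))| \le |\Lambda_{\Psi'}((g_i))|$ as required. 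The only substantive step is securing the complement $U^c$ inside $\ker \psi_j$: this is what keeps $\hat f_j$ equal to $f_j$, and it is precisely what the hypothesis $\sigma_j = \id$ supplies.
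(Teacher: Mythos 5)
Your argument is correct and follows essentially the same route as the paper's proof: the paper fixes a complete set of coset representatives of $\theta(V')^n$ inside $(\ker\psi_j)^n$ and bounds by the best one, while you make the equivalent choice of a complement $U^c \subseteq \ker\psi_j$ to $U = \im\theta$ and select the best $v_2 \in (U^c)^n$; the resulting translates and the final $g_i = \hat f_i \circ \sigma_i$ coincide with the paper's construction, and the pushforward observation matches the paper's change of variables $v = v_\ell + \theta(v')$.
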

\begin{proof}
  Let $v_1,\dots,v_M \in V^n$ be a complete set of coset representatives of $\theta(V')^n$ in $V^n$.  By our hypotheses, we have $\psi_j(\theta(V')) = \psi'_j(V') = W_j$ and so $\ker \psi_j + \theta(V') = V$; hence we may insist that $v_1,\dots,v_M$ all lie in $(\ker \psi_j)^n$.
  
  For any collection of functions $f_i \colon W_i^n \to \CC$, $i \in I$, we have
  \begin{align*}
    \left| \Lambda_\Psi((f_i)_{i\in I}) \right|
      &= \left| \EEE_{v \in {V}^n} \prod_{i \in I} f_i(\psi_i(v)) \right| \\
      &= \left| \EEE_{\ell \in [M]} \EEE_{v' \in {V'}^n} \prod_{i \in I} f_i\left(\psi_i(v_\ell + \theta(v'))\right) \right| \\
      &\le \max_{\ell \in [M]} \left| \EEE_{v' \in {V'}^n} \prod_{i \in I} f_i\left(\psi_i(v_\ell) + \sigma_i(\psi'_i(v'))\right) \right| \, .
  \end{align*}
  Now fix $\ell$ to be any maximal choice, and define $g_i \colon {W'_i}^n \to \CC$ by
  \[
    g_i(w) = f_i(\psi_i(v_\ell) + \sigma_i(w)) \, .
  \]
  We deduce that $\left| \Lambda_\Psi((f_i)_{i \in I}) \right| \le \left| \Lambda_{\Psi'}((g_i)_{i \in I}) \right|$.  Moreover, it follows from our assumptions that $g_j = f_j$, and so the conditions of Definition \ref{def:dominate} are satisfied.
\end{proof}

\begin{definition}
  \label{def:augment}
  If $(\Psi, \Psi')$ obey the hypotheses of Proposition \ref{prop:trivial-dominate}, we say $\Psi'$ dominates $\Psi$ \emph{trivially} at index $j$.  Replacing $\Psi$ by $\Psi'$ is termed a $\TRIVIAL$ operation.
\end{definition}

We now consider how to describe an application of the Cauchy--Schwarz inequality in this language.

\begin{proposition}
  \label{prop:cs}
  Suppose $\Psi = \left(V, (W_i)_{i \in I}, (\psi_i)_{i \in I}\right)$ is a linear datum, and some $j \in I$ is given.  Let $\Psi' = \left(V', (W'_i)_{i\in I'}, (\psi'_i)_{i \in I'}\right)$ be the linear datum defined as follows:
  \begin{itemize}
    \item $V'$ is the fiber product of $V$ with itself over $W_j$, i.e.:
      \[
        V' = \left\{ (v_0, v_1) \in V \oplus V \colon \psi_j(v_0) = \psi_j(v_1) \right\} ;
      \]
    \item $I'$ is the disjoint union of two copies of $I \setminus \{j\}$, denoted
      \[
        I' = \{ i_0 \colon i \in I, i \ne j \} \CMamalg \{ i_1 \colon i \in I, i \ne j \} ;
      \]
    \item for each $i \in I \setminus \{j\}$, $W'_{i_0} = W'_{i_1} = W_i$; and
    \item for each $i \in I \setminus \{j\}$ and $v' = (v_0,v_1) \in V'$,
      \begin{align*}
        \psi'_{i_0}(v_0,v_1) &= \psi_i(v_0) \\
        \psi'_{i_1}(v_0,v_1) &= \psi_i(v_1) .
      \end{align*}
  \end{itemize}
  Then for any $i \in I \setminus \{j\}$, $\Psi'$ dominates $\Psi$ respecting $(i, i_0)$ and with exponent $1/2$.
\end{proposition}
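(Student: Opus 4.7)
The plan is to carry out a single Cauchy--Schwarz step in the variable $w = \psi_j(v)$, which is exactly the standard ``Cauchy--Schwarzing out'' manipulation, and then to identify the resulting expression with $\Lambda_{\Psi'}$ applied to a suitable choice of functions.

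First, given functions $(f_i)_{i \in I}$ with $\|f_i\|_\infty \le 1$, I would rewrite
\[
  \Lambda_\Psi((f_i)_{i \in I}) = \EEE_{w \in W_j^n} f_j(w) \; \EEE_{\substack{v \in V^n \\ \psi_j(v) = w}} \prod_{i \in I \setminus \{j\}} f_i(\psi_i(v)) .
\]
This decomposition is valid because $\psi_j$ is surjective, so in each coordinate the fibers of $\psi_j$ are cosets of $\ker \psi_j$ and all have the same size, making the conditional expectation well-defined and coherent with the unconditioned one. Then the Cauchy--Schwarz inequality in $w$, combined with $\|f_j\|_\infty \le 1$, gives
\[
  \left| \Lambda_\Psi((f_i)_{i \in I}) \right|^2 \le \EEE_{w \in W_j^n} \left| \EEE_{\substack{v \in V^n \\ \psi_j(v) = w}} \prod_{i \ne j} f_i(\psi_i(v)) \right|^2 .
\]

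Next I would expand the square into a double average over independent fibers $v_0, v_1$ with $\psi_j(v_0) = \psi_j(v_1) = w$, then collapse the outer average over $w$ with these two inner averages into a single average over the fiber product
\[
  (V')^n = \left\{ (v_0,v_1) \in V^n \oplus V^n \colon \psi_j(v_0) = \psi_j(v_1) \right\} ,
\]
noting that this is the same as taking the fiber product coordinatewise in $V' \subseteq V \oplus V$. This yields
\[
  \left| \Lambda_\Psi((f_i)_{i \in I}) \right|^2 \le \EEE_{(v_0, v_1) \in (V')^n} \prod_{i \ne j} f_i(\psi_i(v_0)) \, \overline{f_i(\psi_i(v_1))} .
\]

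Finally I would define $g_{i_0} = f_i$ and $g_{i_1} = \overline{f_i}$ for every $i \in I \setminus \{j\}$, note that $\|g_i\|_\infty \le 1$, and observe that by the definitions of $\psi'_{i_0}, \psi'_{i_1}$ the right-hand side above is exactly $\Lambda_{\Psi'}((g_i)_{i \in I'})$. Since $g_{i_0} = f_i$ for the chosen index $i \in I \setminus \{j\}$ and $W'_{i_0} = W_i$, the matching condition in Definition \ref{def:dominate} at the pair $(i, i_0)$ is satisfied, and taking square roots gives the exponent $1/2$. There is no real obstacle here: the only thing to be careful about is the coordinatewise interpretation of the fiber product so that the expansion of the square matches $\Lambda_{\Psi'}$ on the nose, and the bookkeeping that $g_{i_0}$ (not $g_{i_1}$) is the function identified with $f_i$, which is why the proposition's conclusion specifies $(i, i_0)$ rather than $(i, i_1)$.
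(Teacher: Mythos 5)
Your proof is correct and takes essentially the same approach as the paper: decompose the average by conditioning on $\psi_j(v) = w$, apply Cauchy--Schwarz in $w$ (using $\|f_j\|_\infty \le 1$ to absorb the $f_j$ factor), expand the square as an average over the fiber product, and set $g_{i_0} = f_i$, $g_{i_1} = \overline{f_i}$. The only cosmetic difference is that you fold the $\bigl(\EE_w |f_j(w)|^2\bigr)^{1/2}$ factor into the inequality immediately rather than carrying it one line further, and you are slightly more explicit about the coordinatewise $(\cdot)^n$ bookkeeping.
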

We note that $\psi'_{i_0}$, $\psi'_{i_1}$ are surjective, e.g.~by observing that $\{(v,v) \colon v \in V\}$ is a subspace of $V'$.
\begin{proof}
  As promised, this is just the statement of the Cauchy--Schwarz inequality as it applies in this context.  
  Given $f_i \colon W_i^n \to \CC$, we have
  \begin{align*}
    \left| \Lambda_{\Psi}((f_i)_{i \in I}) \right| &= \left| \EEE_{w \in W_j} f_j(w) \EEE_{ v \in \psi_j^{-1}(w) } \prod_{i \ne j} f_i(\psi_i(v)) \right| \\
      &\le \left( \EEE_{w \in W_j} |f_j(w)|^2 \right)^{1/2} \left( \EEE_{w \in W_j} \left| \EEE_{ v \in \psi_j^{-1}(w) } \prod_{i \ne j} f_i(\psi_i(v)) \right|^2 \right)^{1/2}
  \end{align*}
  by Cauchy--Schwarz, and
  \begin{align*}
    &\EEE_{w \in W_j} \left| \EEE_{v \in \psi_j^{-1}(w) } \prod_{i \ne j} f_i(\psi_i(v)) \right|^2 \\
    =& \EEE_{w \in W_j} \left(\EEE_{v_0 \in \psi_j^{-1}(w) } \prod_{i \ne j} f_i(\psi_i(v_0)) \right) \left( \EEE_{v_1 \in \psi_j^{-1}(w) } \prod_{i \ne j} \overline{f_i(\psi_i(v_1))} \right) \\
        =& \Lambda_{\Psi'}\left((f_i)_{i \in I \setminus \{j\}}, (\overline{f_i})_{i \in I \setminus \{j\}} \right) .
  \end{align*}
  Defining $(g_{i})_{i \in I'}$ in the obvious way, and provided $\|f_i\|_\infty \le 1$ for each $i \in I$, we get the desired inequality.
\end{proof}

\begin{definition}
  We denote the system $\Psi'$ defined in Proposition \ref{prop:cs} by $\CS_j(\Psi)$.
\end{definition}

Often we need to apply Cauchy--Schwarz not just to one function, but to several at a time.  The preferred way of formalizing this for our purposes is in two steps.  First, we \emph{merge} all the functions being considered for Cauchy--Schwarz into a single function.  That is, we forget that they are separate functions, and consider their product as just one function of all the variables they collectively depend on.  For instance, we might merge $f_1(x)$ and $f_2(x+y)$ into $\cF(x,y)$.  Next, we apply the Cauchy--Schwarz inequality in the form of Proposition \ref{prop:cs} to the new function $\cF$.

In fact we will want to apply this merging operation in other contexts as well, because doing so is one way to eliminate redundant information.  Having this ability is one of the main motivations for working in this more general language of linear data. 

Again, we encode this operation with a proposition.

\begin{proposition}
  \label{prop:merge}
  Let $\Psi = \left(V, (W_i)_{i \in I}, (\psi_i)_{i \in I}\right)$ be a linear datum, let $J$ be a finite set, and let $\tau \colon I \to J$ be a surjective function.  Define a new linear datum $\Psi' = \left(V, (W'_j)_{j \in J}, (\psi'_j)_{j \in J}\right)$ on the same underlying space $V$, as follows:
  \begin{itemize}
    \item for each $j \in J$, define
      \begin{align*}
        \psi'_j \colon V &\to \bigoplus_{i \in \tau^{-1}(j)} W_i \\
        v &\mapsto (\psi_i(v))_{i \in \tau^{-1}(j)} ;
      \end{align*}
    \item define $W'_j = \im \psi'_j$; and
    \item by abuse of notation consider $\psi'_j$ as a map $V \to W'_j$.
  \end{itemize}
  Then for any $i \in I$ and $j \in J$ with $\tau^{-1}(j) = \{i\}$, $\Psi'$ dominates $\Psi$ respecting $(i,j)$ and with exponent $1$.
\end{proposition}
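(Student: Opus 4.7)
The plan is to prove the stronger statement that $\Lambda_{\Psi'}((g_j)_{j \in J}) = \Lambda_\Psi((f_i)_{i \in I})$ exactly, for a specific choice of the $g_j$ obtained by ``merging'' the functions $f_i$ along each fiber of $\tau$. Since the required inequality in Definition \ref{def:dominate} with $c = 1$ is just $|\Lambda_\Psi| \le |\Lambda_{\Psi'}|$, equality on the nose is more than enough.

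Given $f_i : W_i^n \to \CC$ with $\|f_i\|_\infty \le 1$, the first step is to define, for each $j \in J$,
\[
  g_j(w) = \prod_{i \in \tau^{-1}(j)} f_i(w_i) \qquad \text{for } w \in (W'_j)^n,
\]
where $w_i \in W_i^n$ denotes the projection of $w$ onto the $i$-th factor of the ambient direct sum $\bigoplus_{i' \in \tau^{-1}(j)} W_{i'}$ in which $W'_j$ was defined to sit. Boundedness $\|g_j\|_\infty \le 1$ is immediate from the product structure.

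Next, I would unpack the two multilinear averages. By construction of $\psi'_j$, its $i$-th coordinate projection is precisely $\psi_i$ for every $i \in \tau^{-1}(j)$, so $g_j(\psi'_j(v)) = \prod_{i \in \tau^{-1}(j)} f_i(\psi_i(v))$. Since the fibers $\tau^{-1}(j)$ partition $I$, multiplying over $j \in J$ regroups the product:
\[
  \prod_{j \in J} g_j(\psi'_j(v)) = \prod_{i \in I} f_i(\psi_i(v)).
\]
Averaging over $v \in V^n$ gives $\Lambda_{\Psi'}((g_j)_{j \in J}) = \Lambda_\Psi((f_i)_{i \in I})$, hence the required inequality.

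The only remaining point is the compatibility required by Definition \ref{def:dominate}: when $\tau^{-1}(j) = \{i\}$, the ambient direct sum collapses to $W_i$ itself, so $W'_j = W_i$ canonically, $\psi'_j = \psi_i$, and the formula for $g_j$ collapses to $g_j = f_i$. This is a purely notational check and represents the only ``step'' in the proof. There is no genuine obstacle here: the proposition merely records, as a formal operation on linear data, the triviality that the product of functions can be regrouped according to any partition of the index set, and that doing so leaves the functions attached to singleton fibers unchanged.
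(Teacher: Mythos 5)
Your proof is correct and matches the paper's proof essentially verbatim: both define $g_j$ as the product of the $f_i$ over the fiber $\tau^{-1}(j)$ restricted to $(W'_j)^n$, observe that regrouping the product yields $\Lambda_\Psi = \Lambda_{\Psi'}$ exactly, and check $g_j = f_i$ when the fiber is a singleton.
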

\begin{proof}
  Given $(f_i)_{i \in I}$, for each $j \in J$ let
  \begin{align*}
    g_j \colon \bigoplus_{i \in \tau^{-1}(j)} W_i^n &\to \CC \\
          (w_i) &\mapsto \prod_{i \in \tau^{-1}(j)} f_i(w_i)
  \end{align*}
  and then restrict this function to the subspace ${W'_j}^n$.  Then it is easy to see that
  \[
    \Lambda_{\Psi}\left((f_i)_{i \in I}\right) = \Lambda_{\Psi'}\left((g_j)_{j \in J} \right)
  \]
and so the necessary inequality is in fact an equality.  Moreover, if $\tau^{-1}(j) = \{i\}$ is a singleton then $W_i = W'_j$ and $g_j = f_i$, so the conditions of Definition \ref{def:dominate} are met.
\end{proof}

\begin{remark}
  The definitions of $\psi'_j$ and $W'_j$ are somewhat involved. A more natural characterization in terms of subspaces of $V$ is that
  \[
    \ker \psi'_j = \bigcap_{i \in \tau^{-1}(j)} \ker \psi_j
  \]
  i.e.~merging functions corresponds to intersecting the corresponding subspaces.  Dually, we have
  \[
    (\ker \psi'_j)^\perp = \sum_{i \in \tau^{-1}(j)} (\ker \psi_j)^{\perp}
  \]
  so merging takes spans of the relevant subspaces of $V^\ast$.  Again, either of these allows us to reconstruct $\psi'_j$ and $W'_j$ up to isomorphism.
\end{remark}

\begin{definition}
  We denote the linear datum $\Psi'$ defined as in Proposition \ref{prop:merge} by $\MERGE_\tau(\Psi)$.
\end{definition}

In a slight abuse of notation, we may omit any indices that are unchanged by $\tau$ from the description of $\tau$.  For instance, the operation that merges indices $4$ and $7$ and labels the new combined index $A$ might be denoted $\MERGE_{\{4,7\} \mapsto A}$.

We are now in a position to state a version of Theorem \ref{thm:positive} coded in this language. 

\begin{lemma}
  \label{lem:game}
  Let $p$ be a prime, and let $\phi_1,\dots,\phi_6 \colon \ZZ^3 \to \ZZ$ be six linear forms in three variables.  Let $V = \FF_p^3$, and by abuse of notation let $\phi_i \colon V \to \FF_p$ for $1 \le i \le 6$ denote the same forms reduced modulo $p$, assumed to be non-zero.  Write $W_i = \FF_p$ for $1 \le i \le 6$, set $I = [6]$, and hence define the linear datum $\Psi = \left(V, (W_i)_{i \in I}, (\phi_i)_{i \in I}\right)$.

  Suppose $[\phi_1],\dots,[\phi_6]$ do not lie on a conic in $\PP(V^\ast)$.  Then there is some sequence of operations $\TRIVIAL$, $\CS_j$ and $\MERGE_\tau$ which can be applied to $\Psi$ in turn to produce a final linear datum $\Psi'$, such that:
  \begin{itemize}
    \item $\Psi' = (V, (W_i)_{i \in I}, (\phi'_i)_{i \in I})$ where $I$, $V$, $W_i$ are unchanged and $\phi_i = \phi_i'$ for $1 \le i \le 4$; i.e.~$\Psi'$ again corresponds to $6$ linear forms in $3$ variables over $\FF_p$, where $\phi'_5$ and $\phi'_6$ only may have changed;
    \item by applying Propositions \ref{prop:trivial-dominate}, \ref{prop:cs} or \ref{prop:merge} as appropriate as we go, we can deduce that $\Psi'$ dominates $\Psi$ respecting $(1,1)$ and with exponent $2^{-m}$ where $m$ is the number of $\CS$ steps;
    \item $m$ is bounded by $O(K^{O(1)})$ where $K$ is the size of the largest coefficient of $\Phi$; or alternatively by $O(\log p)$; and
    \item the points $[\phi'_1],\dots,[\phi'_6]$ do not lie on a conic, but some three are collinear.
  \end{itemize}
\end{lemma}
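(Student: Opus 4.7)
The overall plan is to build a library of composite $(\TRIVIAL, \CS, \MERGE)$ sequences that act as ``arithmetic subroutines'' on the mutable pair $(\phi_5, \phi_6)$ while ultimately leaving $\phi_1, \ldots, \phi_4$ untouched, and then to use these subroutines to drive the six-point configuration to one in which three points are collinear but all six are not on a conic. At that stage Proposition \ref{prop:skew-collinear} supplies the $U^2$-bound on $f_1$, and the chain of dominations through Definition \ref{def:dominate} assembles into the required $\|f_1\|_{U^2}^{2^{-m}}$ control.

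First I would identify the basic subroutines. A subroutine consists of a $\CS_j$ for $j \in \{5, 6\}$, which doubles all other forms onto the five-dimensional fiber product $V'$, followed by $\TRIVIAL$ steps (realized as linear maps $\theta \colon V_2 \to V'$ from a three-dimensional $V_2$) and a $\MERGE_\tau$ collapsing the ten indices back to six. The joint choice of $\theta$ and $\tau$ determines the new $\phi_5', \phi_6'$ and, crucially, whether $\phi_1, \ldots, \phi_4$ survive literally. Writing $\theta = (\alpha, \beta)$ with $\alpha, \beta \colon V_2 \to V$ satisfying $\phi_j \circ \alpha = \phi_j \circ \beta$, one can enumerate, by direct linear algebra, the admissible $(\alpha, \beta, \tau)$ triples. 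Iterating these, one produces a library of composite moves that in a basis drawn from $\phi_1, \phi_2, \phi_3$ realize something like elementary row and column operations on the $2 \times 3$ coefficient matrix of $(\phi_5, \phi_6)$.

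Next I would run a Euclidean-style algorithm using these elementary moves to reach the target configuration. Coordinatizing $(\phi_5, \phi_6)$ by their coefficients in the basis $(\phi_1, \phi_2, \phi_3)$, the goal is to zero out an appropriate minor so that one of $\phi_5'$ or $\phi_6'$ becomes a linear combination of only two of $\phi_1, \phi_2, \phi_3$, placing its projective image on the line through two of the fixed points and producing three collinear forms. By standard termination analysis of the Euclidean algorithm, this succeeds in $O(\log p)$ or $O(K^{O(1)})$ steps. Throughout, one has to verify that intermediate configurations never land accidentally on a conic (which would invalidate the true-complexity-one assumption needed at the end); since the conic locus is cut out by a single polynomial and the move library is several-dimensional, avoidance reduces to a finite case analysis of forced bad configurations, handled by switching among several available moves at each step.

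The main obstacle is precisely step one: exhibiting a nontrivial family of elementary moves realizable under the rigid literal-equality constraint $\phi_i = \phi_i'$ for $i \leq 4$. Naively, applying $\CS_5$ followed by the diagonal $\TRIVIAL$ and pairwise merges simply returns the original datum. Building genuinely non-trivial moves requires exploiting the dimension slack in $V'$ and the flexibility in $\MERGE$ partitions that identify \emph{non-obvious} pairs of duplicated indices, thereby engineering quotient relations that impose controlled transformations on $\phi_5, \phi_6$ while the extra constraints on $\phi_1, \ldots, \phi_4$ are absorbed by the quotient. Matching this flexibility against the rigidity of the preservation requirement, and ensuring that the resulting move repertoire really does generate enough of an $\FF_p$-arithmetic to support a Euclidean reduction, is the crux of Section \ref{sec:hard-bit} and is what ultimately controls the final bound on $C(\Phi)$.
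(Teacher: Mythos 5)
Your proposal captures the correct high-level architecture: assemble a library of composite $(\TRIVIAL,\CS,\MERGE)$ ``subroutines'' that act on $(\phi_5,\phi_6)$ while ultimately restoring $\phi_1,\dots,\phi_4$ literally, then drive the configuration to a collinear-but-not-conic one by a Euclidean reduction, paying one factor of $1/2$ in the exponent per $\CS$ used. You also correctly locate the crux: exhibiting any such non-trivial subroutine under the rigid requirement $\phi'_i=\phi_i$ for $i\le 4$. But at exactly that point the proposal stops. You acknowledge the obstacle and describe what a move library \emph{ought} to do, but never construct it, and this is not a small omission --- it is the bulk of Section \ref{sec:hard-bit}. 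The paper's resolution (Lemma \ref{lem:key-lemma}) implements a specific geometric ``block move'' $B_{1\to2}$ --- sending $X_5\mapsto X_2(X_1X_5\cap X_3X_4)\cap\ell$ and $X_6\mapsto X_1(X_2X_6\cap X_3X_4)\cap\ell$ --- using four $\CS$ steps (exponent $1/16$), three $\MERGE$s, and a carefully engineered $\TRIVIAL$ collapse built from four auxiliary linear maps $\tau_1,\dots,\tau_4$ on $V\oplus\FF_p$. Without a concrete substitute for this construction, your argument does not get off the ground.

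Two further points where the proposal diverges in ways that would cause trouble. First, you propose handling the conic-avoidance constraint by ``a finite case analysis of forced bad configurations, handled by switching among several available moves at each step.'' The paper instead proves an \emph{invariance}: Lemma \ref{lem:safe-block} shows a block move can never take a non-conic configuration onto a conic, via the Möbius-transformation identity $\tau\sigma_{1\to2}=\sigma_{2\to1}\tau$ on $\ell\cong\PP^1$, where $\tau$ is the involution ``second intersection with the conic through $X_1,\dots,X_4$.'' Your ad hoc switching strategy has no guarantee of terminating or of staying within the length budget needed for the $O(K^{O(1)})$ or $O(\log p)$ bound on $m$, whereas the paper's invariance removes the issue entirely. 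Second, you do not address the degenerate case $X_5=X_6$ that can arise in intermediate configurations. Since at that point the standard datum cannot remember the line $\ell$, the paper introduces \emph{augmented data} over $V\oplus\FF_p$ (Definition \ref{def:augmented}, Lemma \ref{lem:make-augmented}) precisely to carry $\ell$ through the iteration; your coefficient-matrix picture over a basis drawn from $\phi_1,\phi_2,\phi_3$ has no analogous device, and the reduction would break when $\phi_5'$ and $\phi_6'$ become proportional. Finally, a minor modeling mismatch: the paper does \emph{not} act by row/column operations on a $2\times 3$ coefficient matrix of $(\phi_5,\phi_6)$; it coordinatizes $X_5$ and $X_6$ separately as points of $\PP^1(\FF_p)$ on the fixed line $\ell$ and shows (Lemma \ref{lem:euclid}) that compositions of the $\sigma_{i\to j}$ realize the matrices $\left(\begin{smallmatrix}1&\pm2\\0&1\end{smallmatrix}\right)$, $\left(\begin{smallmatrix}1&0\\\pm2&1\end{smallmatrix}\right)$ etc.\ in $\PGL_2(\FF_p)$, after which the Euclidean/expander bound applies directly.
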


This last condition means that one of Proposition \ref{prop:63-trivial-cs} or Proposition \ref{prop:skew-collinear} applies to the forms $\phi'_1,\dots,\phi'_6$, and so
\[
  |\Lambda_{\Psi'}(f_1,\dots,f_6)| \le \|f_1\|_{U^2}^{1/2}
\]
for any $f_i \colon W_i \to \CC$ with $\|f_i\|_\infty \le 1$.  Combining this with the domination statement allows us to deduce Theorem \ref{thm:positive} (at least for $j=1$; the other cases follow by relabelling the indices).

\begin{remark}
  The combinatorial operations $\CS_j$, $\MERGE_\tau$ describe the heart of any strategy, whereas $\TRIVIAL$ steps are really just book-keeping to aid with proofs.  One could in principle delay all $\TRIVIAL$ steps to the end of the argument, or perhaps remove them completely, without fundamentally changing the approach.
\end{remark}

\subsection{Graphs of vector spaces}

One remaining difficulty in reasoning about the effect of repeated invocations of $\CS_j$ is finding a good notation for discussing the iterated fiber products that arise in the definition of the ambient vector space $V'$.

At the expense of yet further notational overhead, we introduce one more tool to help with this.  This subsection has very little content beyond allowing us to draw certain diagrams and make sense of what they mean.

\begin{definition}
  Let $\cG = (X,E)$ be a (multi)-graph with vertex set $X$ and edge set $E$, and let $V$ be any vector space over $\FF_p$.  Suppose that to every edge $e = (x,y) \in E$ is associated a subspace $H_e$ of $V$.  Then the vector space $\cG(V,(H_e)_{e \in E})$ associated to this set-up is the subspace of $V^X$ given by
  \[
    \left\{ \rho \colon X \to V \colon \rho(x) - \rho(y) \in H_e \text{ for every } (x,y) = e \in E \right\} \, .
  \]
\end{definition}

In other words, we place a copy of $V$ at every vertex and impose a compatibility restriction for every edge.

We will always apply this when each subspace $H_e$ is one of $\ker \psi_i$ for $1 \le i \le 6$, where $\psi_i \colon V \to W_i$ is part of some original linear datum with underlying space $V$.  It then makes sense to label each edge $e$ with a number $i \in [6]$, in place of the subspace $H_e = \ker \psi_i$.

The useful feature of this set-up is that $\CS_j$ steps correspond to simple combinatorial operations on the graph $\cG$: we replace $X$ by two copies $X_0,X_1$, keeping all the edges in each half; and we add an edge between $X_0$ and $X_1$ for every linear form involved in that Cauchy--Schwarz step (which is applied to the merge of some linear forms).

This is best illustrated by example.  Suppose we start with a linear datum $\Psi = \left(V, (W_i)_{i \in [6]}, (\psi_i)_{i \in [6]}\right)$.  At this point, the graph $\cG$ consists of a single vertex and no edges.

\begin{center}
  \begin{tikzpicture}
    \node[circle,draw] (0) at (0,0) {};
  \end{tikzpicture}
\end{center}
If we now apply $\CS_6$, we get a linear datum whose underlying vector space corresponds to the following graph:
\begin{center}
  \begin{tikzpicture}
    \node[circle,draw] (0) at (0,0) {$0$};
    \node[circle,draw] (1) at (2,0) {$1$};
    \draw (0) -- (1) node[midway,below] {$6$};
  \end{tikzpicture}
\end{center}
Indeed, the definition of $\cG(V, (H_e)_{e \in E})$ in this case gives exactly the fiber product from Proposition \ref{prop:cs}.  Recall that the indices / forms in scope are now called $1_0,2_0,3_0,4_0,5_0$ and $1_1,2_1,3_1,4_1,5_1$ and are associated to the left vertex and the right vertex respectively.

Suppose we now apply $\MERGE_{\{4_0, 5_1\} \mapsto A}$ (recalling that by convention we assume the other indices are sent to themselves) and then apply $\CS_A$.  The new graph is:

\begin{center}
  \begin{tikzpicture}
    \node[circle,draw] (00) at (0,0) {$00$};
    \node[circle,draw] (10) at (2,0) {$10$};
    \node[circle,draw] (01) at (0,2) {$01$};
    \node[circle,draw] (11) at (2,2) {$11$};
    \draw (00) -- (10) node[midway,below] {$6$};
    \draw (01) -- (11) node[midway,above] {$6$};
    \draw (00) -- (01) node[midway,left] {$4$};
    \draw (10) -- (11) node[midway,right] {$5$};
  \end{tikzpicture}
\end{center}

Finally, we might apply $\CS_{5_{01}}$ to obtain:

\begin{center}
  \begin{tikzpicture}
    \node[circle,draw] (000) at (0,0) {$000$};
    \node[circle,draw] (100) at (2,0) {$100$};
    \node[circle,draw] (010) at (0,2) {$010$};
    \node[circle,draw] (110) at (2,2) {$110$};
    \draw (000) -- (100) node[midway,below] {$6$};
    \draw (010) -- (110) node[midway,above] {$6$};
    \draw (000) -- (010) node[midway,left] {$4$};
    \draw (100) -- (110) node[midway,right] {$5$};

    \node[circle,draw] (001) at (-2,0) {$001$};
    \node[circle,draw] (101) at (-4,0) {$101$};
    \node[circle,draw] (011) at (-2,2) {$011$};
    \node[circle,draw] (111) at (-4,2) {$111$};
    \draw (001) -- (101) node[midway,below] {$6$};
    \draw (011) -- (111) node[midway,above] {$6$};
    \draw (001) -- (011) node[midway,right] {$4$};
    \draw (101) -- (111) node[midway,left] {$5$};

    \draw (010) -- (011) node[midway,above] {$5$};
  \end{tikzpicture}
\end{center}

Any formal justification of this general pattern would be tedious and unreadable, so will will not attempt one.  The reader may, if they wish, treat all such diagrams as visual aids having no formal impact on the proofs.

\section{The detailed strategy for Theorem \ref{thm:positive}}
\label{sec:hard-bit}

The formalism of the previous section gives us very significant freedom to make radical changes to a linear datum $\Psi$.  However, to prove a general result, what we want is to find a sequence of operations that changes $\Psi$ as conservatively as possible, ideally giving back another datum of the same form with a small predictable change to some of the parameters.

Our task in this section then splits into two parts:
\begin{enumerate}[label=(\roman*)]
  \item to describe such a sequence of operations -- henceforth called a \emph{block} -- and analyze and verify the change it produces; and
  \item to show how to chain these blocks together to reach sufficiently arbitrary points in the parameter space.
\end{enumerate}

We will approach these tasks in reverse order.

\subsection{The effect of the block construction}
\label{subsec:block-strategy}

We again write $V = \FF_p^3$.  Suppose $X_1,\dots,X_6$ are six points in $\PP(V^\ast)$, corresponding to some system of six linear forms.  Suppose furthermore that $X_1,\dots,X_4$ are in general position, and that $X_5, X_6$ lie on some given line $\ell$ but $X_1,\dots,X_4$ do not lie on $\ell$.  Note we allow that, say, $X_1 X_2 X_5$ be collinear, or even that $X_5 = X_6$; in the latter case, $\ell$ forms part of the data of the set-up since it cannot be recovered from $X_1,\dots,X_6$.

We will now describe an operation that modifies this collection of points. Specifically, it will leave $X_1,\dots,X_4$ unchanged, and replace $X_5, X_6$ with two different points $X'_5, X'_6$ that both lie on the unchanged line $\ell$.

The points $X'_5, X'_6$ are constructed as follows.  Let $Y$ be the point at the intersection of the lines $X_1 X_5$ and $X_3 X_4$; then $X'_5$ is the intersection of $X_2 Y$ and $\ell$.  Similarly, letting $Z$ be the intersection of $X_2 X_6$ and $X_3 X_4$, the point $X'_6$ is the intersection of $X_1 Z$ and $\ell$.  This construction is shown in Figure \ref{fig:main-construction}.

\begin{figure}
  \begin{center}
    \begin{tikzpicture}[dot/.style={circle,inner sep=0pt,fill,minimum size=5pt},
      extended line/.style={shorten >=-#1,shorten <=-#1},
      extended line/.default=10cm,
      scale=0.8]
      \clip (-5,-5) rectangle (5,5);
      \node[dot,label={$X_5$}] (5) at (-2,4) {};
      \node[dot,label={$X_6$}] (6) at (3,3) {};
      \node[dot,label={$X_1$}] (1) at (-1,0) {};
      \node[dot,label={$X_2$}] (2) at (1,0) {};
      \node[dot,label={$X_3$}] (3) at (-3,-3) {};
      \node[dot,label={$X_4$}] (4) at (4,-4) {};

      \node[dot,label={$Y$}] (Y) at (intersection of 5--1 and 3--4) {};
      \node[dot,label={$Z$}] (Z) at (intersection of 6--2 and 3--4) {};

      \node[dot,label={$X'_5$}] (5p) at (intersection of Y--2 and 5--6) {};
      \node[dot,label={$X'_6$}] (6p) at (intersection of Z--1 and 5--6) {};

      \draw[thick, extended line] (5)--(6);
      \draw[thick, extended line] (3)--(4);

      \draw[thin, dotted, extended line] (5)--(Y);
      \draw[thin, dotted, extended line] (6)--(Z);
      \draw[thin, dotted, extended line] (Y)--(5p);
      \draw[thin, dotted, extended line] (Z)--(6p);
    \end{tikzpicture}
  \end{center}
  \caption{The block construction}
  \label{fig:main-construction}
\end{figure}
Given our hypotheses on $X_1,\dots,X_6$, this definition always makes sense.

We call this construction a \emph{block operation} $B_{1\to2}$, i.e.~$B_{1 \to 2}(X_1,\dots,X_6; \ell) = (X_1,\dots,X_4,X'_5,X'_6; \ell)$.  By exchanging the roles of $X_1,X_2,X_3,X_4$ we create a family of $12$ operations $B_{i\to j}$ for each pair $i,j \in [4]$, $i \ne j$.  Swapping $X_3$ and $X_4$ but leaving $X_1$ and $X_2$ the same gives the same construction, which accounts for the fact there are $12$ operations not $4! = 24$, and for the choice of notation.

In Section \ref{subsec:build-block}, we will implement a sequence of $\CS$, $\MERGE$ and $\TRIVIAL$ operations whose overall effect is equivalent to this $B_{1 \to 2}$ move.  That is, starting with a linear datum corresponding (indirectly) to forms $\phi_1,\dots,\phi_6$ and applying this sequence of operations, we obtain a linear datum corresponding to $B_{1\to 2}([\phi_1],\dots,[\phi_6])$.  We will not state this result precisely yet, as there are some technical subtleties to do with the case $[\phi_5] = [\phi_6]$, where the previous sentence does not even make sense and the datum has to be modified to encode the line $\ell$ as well as the points.\footnote{Handling this case correctly is an irritating source of complexity in the argument, but seems to be slightly less irritating than avoiding it.}

For the time being we will consider the operations $B_{i\to j}$ as a black box.  In order to prove Lemma \ref{lem:game}, we broadly need to show that some sequence of moves $B_{i \to j}$ takes the original $X_5,X_6$ to some final $X''_5,X''_6$, with the property that one of $X_i,X_j,X''_5$ or $X_i, X_j, X''_6$ are collinear for some choice of $1 \le i < j \le 4$, but the complementary triple are not collinear.

The second part of this is guaranteed by the following lemma, which shows that we never lose control of true complexity by applying $B_{1 \to 2}$ (and symmetrically $B_{i \to j}$ for other pairs $(i,j)$).
\begin{lemma}
  \label{lem:safe-block}
  Let $(X_1,\dots,X_6; \ell)$ be as above, and suppose:
  \begin{itemize}
    \item if $X_5 \ne X_6$, that $X_1,\dots,X_6$ do not lie on a \uppar{possibly degenerate} conic; or
    \item if $X_5 = X_6$, that $X_1,\dots,X_5$ do not lie on a \uppar{possibly degenerate} conic that is tangent to $\ell$ at $X_5$.
  \end{itemize}
  Then the same is true of $B_{1\to 2}(X_1,\dots,X_6; \ell)$.
\end{lemma}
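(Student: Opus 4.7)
The plan is to reduce the statement to an intertwining identity between two projective automorphisms of $\ell$, exploiting the classical Desargues involution theorem. Applied to the pencil of conics in $\PP(V^\ast)$ through $X_1, X_2, X_3, X_4$, that theorem produces a well-defined involution $\iota\colon \ell\to\ell$ pairing the two intersection points of each conic of the pencil with $\ell$; the hypothesis that none of $X_1,\dots,X_4$ lies on $\ell$ guarantees that $\ell$ contains no base point of the pencil, so $\iota$ is genuinely an involution of $\ell\cong \PP^1$. In this language the lemma's hypotheses and conclusion become uniform: \emph{$X_1,\dots,X_6$ lie on a (possibly degenerate) conic} with $X_5\ne X_6$ is equivalent to $\iota(X_5)=X_6$ (the case where $\ell$ itself is a component of the conic being ruled out because no line meets all four of $X_1,\dots,X_4$), while \emph{$X_1,\dots,X_5$ lie on a conic tangent to $\ell$ at $X_5$} when $X_5=X_6$ is equivalent to $\iota$ fixing $X_5$.

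Next I would observe that the block construction exhibits $T_1\colon X_5\mapsto X'_5$ as the composition of projection from $X_1$ sending $\ell\to X_3X_4$ with projection from $X_2$ sending $X_3X_4\to\ell$, and $T_2\colon X_6\mapsto X'_6$ as the same construction with $X_1$ and $X_2$ interchanged. Both are therefore projective automorphisms of $\ell$. The heart of the proof is the intertwining identity
\[
  \iota\circ T_1 \;=\; T_2\circ \iota,
\]
which, being an equality of projective maps of $\PP^1$, needs to be verified at only three points. Take $\alpha=X_1X_2\cap\ell$ and $\beta=X_3X_4\cap\ell$: direct inspection shows $T_1$ and $T_2$ each fix both $\alpha$ and $\beta$, while $\iota$ swaps them (via the degenerate conic $X_1X_2\cup X_3X_4$), so the identity holds on $\{\alpha,\beta\}$. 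At $\gamma=X_1X_3\cap\ell$ one computes $T_1(\gamma)=X_2X_3\cap\ell$, $\iota(\gamma)=X_2X_4\cap\ell$, $T_2(X_2X_4\cap\ell)=X_1X_4\cap\ell$, and $\iota(X_2X_3\cap\ell)=X_1X_4\cap\ell$ (the last two pairs coming from the other degenerate conics $X_1X_3\cup X_2X_4$ and $X_1X_4\cup X_2X_3$), matching.

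The lemma then follows by contrapositive in a unified manner: if the conclusion fails, then either $\iota(X'_5)=X'_6$ (case $X'_5\ne X'_6$) or $\iota$ fixes $X'_5$ (case $X'_5=X'_6$). Applying the intertwining identity and the bijectivity of $T_2$ converts each situation into either $\iota(X_5)=X_6$ or $\iota(X_5)=X_5$, contradicting the corresponding hypothesis on the input.

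The principal obstacle I anticipate is bookkeeping around degenerations rather than anything genuinely subtle: one must confirm that all the auxiliary intersections in the construction are well-defined (which follows easily from $X_1,\dots,X_4$ being in general position and off $\ell$), verify that Desargues' involution is genuinely non-degenerate and defined over all characteristics including $p=2$, and check that the $X_5=X_6$ case really does align with the fixed-point interpretation of tangency via the multiplicity conventions in the involution.
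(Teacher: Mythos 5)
Your approach is essentially identical to the paper's: the paper also introduces the Desargues involution (its $\tau$, your $\iota$) on $\ell$, identifies the two halves of the block move as M\"obius transformations $\sigma_{1\to 2},\sigma_{2\to 1}$ of $\ell$ (your $T_1,T_2$) obtained as compositions of perspectivities, reformulates the hypothesis uniformly as ``$\tau(X_5)\ne X_6$'', and derives the conclusion from the intertwining relation $\tau\sigma_{1\to 2}=\sigma_{2\to1}\tau$, checked at three points of $\ell$.

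There is, however, a small genuine gap in the verification step. You check the intertwining identity at the three points $A_{12}=X_1X_2\cap\ell$, $A_{34}=X_3X_4\cap\ell$, and $A_{13}=X_1X_3\cap\ell$, and appeal to the fact that a M\"obius transformation of $\PP^1$ is determined by its values at three points. But that requires the three points to be distinct. Under the standing hypotheses, $A_{12}\ne A_{13}$ and $A_{34}\ne A_{13}$ are guaranteed (since $X_1,X_3\notin\ell$), but $A_{12}=A_{34}$ is entirely possible: it happens exactly when the diagonal point $X_1X_2\cap X_3X_4$ of the quadrangle lies on $\ell$, and nothing in the set-up of \S4.1 or in the hypotheses of Lemma~\ref{lem:safe-block} excludes this. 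In that degenerate configuration your three points collapse to two and the argument as written does not determine the M\"obius map uniquely. The paper avoids this by checking at $A_{12}$, $A_{13}$, $A_{14}$ --- three points all constructed from lines through $X_1$, so that any coincidence among them would force $X_1\in\ell$, contrary to hypothesis. The fix to your argument is therefore immediate: replace $A_{34}$ by $A_{14}$ (or $A_{24}$) and re-run the same computation. A minor further quibble: your parenthetical justification ``no line meets all four of $X_1,\dots,X_4$'' should read that $X_1,\dots,X_4$ are not collinear and not on $\ell$, so a conic of the form $\ell\cup m$ cannot contain all of them.
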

Note that in the degenerate case, saying a degenerate conic consisting of two lines $\mu_1$,$\mu_2$ is ``tangent'' to $\ell$ translates algebraically to saying that $\mu_1,\mu_2,\ell$ are concurrent.
\begin{proof}
  For any point $Y$ on $\ell$, there is an unique (possibly degenerate) conic $C$ passing through $X_1,\dots,X_4$ and $Y$.  This conic meets $\ell$ again at precisely one other point, counting multiplicity, which we denote by $\tau(Y)$.  So, $\tau$ is an involution $\tau \colon \ell \to \ell$ of the points of $\ell$.

  It is clear without doing detailed calculations that $\tau$ is a birational map $\PP^1(\FF_p) \to \PP^1(\FF_p)$, and so it must be a M\"obius transformation (or one can check this directly).  Moreover, if we write $A_{ij}$ for $1 \le i < j \le 4$ for the intersection point of the lines $X_i X_j$ and $\ell$, then $\tau$ is characterized by
  \begin{equation}
    \label{eq:tau-props}
    \begin{aligned}
      \tau(A_{12}) &= A_{34} &
      \tau(A_{13}) &= A_{24} &
      \tau(A_{14}) &= A_{23} \\
      \tau(A_{34}) &= A_{12} &
      \tau(A_{24}) &= A_{13} &
      \tau(A_{23}) &= A_{14}
    \end{aligned}
  \end{equation}
  since in all of these cases the conic $C$ is degenerate and so $\tau(Y)$ can be found by inspection.

  Let $\sigma_{1\to2}$ be the map $\ell \to \ell$ sending $P \in \ell$ to $Q X_2 \cap \ell$ where $Q = X_1 P \cap X_3 X_4$ (i.e., how we obtained $X'_5$ from $X_5$ in the definition of $B_{1\to2}$).  Then $\sigma_{1\to 2}$ is also a M\"obius transformation $\ell \to \ell$, since it is the composition of two perspectivities, the first sending $\ell$ to $X_3 X_4$ via $X_1$ and the second sending $X_3 X_4$ to $\ell$ via $X_2$.  Moreover, it is characterized by
  \begin{equation}
    \label{eq:sigma-props}
    \begin{aligned}
      \sigma_{1\to2}(A_{12}) &= A_{12} &&&
    \sigma_{1\to2}(A_{13}) &= A_{23} \\
      \sigma_{1\to2}(A_{14}) &= A_{24} &&&
    \sigma_{1\to2}(A_{34}) &= A_{34}
    \end{aligned}
  \end{equation}
  as again the image point in these cases is immediate by inspection.  Let $\sigma_{i\to j}$ for $1 \le i \ne j \le 4$ be defined similarly by permuting the roles of $X_1,\dots,X_4$; so \eqref{eq:sigma-props} holds analogously for these under a suitable permutation of indices.  Note that $B_{1\to 2}(X_1,\dots,X_6; \ell) = (X_1,\dots,X_4,X'_5,X'_6; \ell)$ where $X'_5 = \sigma_{1 \to 2}(X_5)$ and $X'_6 = \sigma_{2 \to 1}(X_6)$.

  Now, the hypothesis on $X_1,\dots,X_6$ in the statement holds if and only if $\tau(X_5) \ne X_6$.  We wish to deduce that $\tau(X'_5) \ne X'_6$; equivalently, that $\tau \sigma_{1\to 2}(X_5) \ne \sigma_{2\to 1}(X_6)$.
  It would suffice to show that $\tau \sigma_{1\to 2} = \sigma_{2\to 1} \tau$ as M\"obius transformations $\ell \to \ell$.  However, this is immediate because
  \begin{align*}
    \tau \sigma_{1 \to 2} (A_{12}) = \sigma_{2 \to 1} \tau (A_{12}) &= A_{34} \\
    \tau \sigma_{1 \to 2} (A_{13}) = \sigma_{2 \to 1} \tau (A_{13}) &= A_{14} \\
    \tau \sigma_{1 \to 2} (A_{14}) = \sigma_{2 \to 1} \tau (A_{14}) &= A_{13}
  \end{align*}
  (using \eqref{eq:tau-props} and \eqref{eq:sigma-props}) and because it follows from our general position assumptions that $A_{13}$,$A_{14}$,$A_{34}$ are distinct points of $\ell$, and therefore uniquely determine a M\"obius transformation.
\end{proof}

This means that if we can find a sequence of operations $B_{i\to j}$ that takes $X_5$ to some $X''_5$ such that some triple $X_k, X_\ell, X''_5$ for $1 \le k < \ell \le 4$ is collinear, then the complementary triple \emph{cannot} be collinear as then $X_1,\dots,X_4,X''_5,X''_6$ would lie on a degenerate conic.  So, we can largely forget about $X_6$ in what follows and concentrate on the action of $B_{i\to j}$ on $X_5$, which corresponds to the M\"obius transformations $\sigma_{i\to j}$ defined in the proof of Lemma \ref{lem:safe-block}.

The following lemma explains how to take $X_5$ to an arbitrary point on $\ell$, relatively efficiently, using multiple transformations $\sigma_{i \to j}$.

\begin{lemma}
  \label{lem:euclid}
  We continue the notation from the proof of Lemma \ref{lem:safe-block}.  Suppose we identify $\ell$ with $\PP^1(\FF_p)$ \uppar{i.e., choose coordinates} by identifying $A_{12}$ with $\infty$, $A_{13}$ with $0$ and $A_{23}$ with $1$ \uppar{again noting these are guaranteed to be distinct}.  Every M\"obius transformation of $\ell$ now corresponds to a $2\times 2$ matrix in $\PGL_2(\FF_p)$.

  Then the following hold:
  \[
    \begin{aligned}
    \sigma_{4 \to 1} \sigma_{1 \to 2} \sigma_{2 \to 4} &= \begin{pmatrix} -1 & 1 \\ 0 & 1 \end{pmatrix} \,; &
      \sigma_{4 \to 2} \sigma_{2 \to 3} \sigma_{3 \to 4} &= \begin{pmatrix} 0 & 1 \\ 1 & 0 \end{pmatrix} \,; &
    \sigma_{4 \to 3} \sigma_{3 \to 1} \sigma_{1 \to 4} &= \begin{pmatrix} 1 & 0 \\ 1 & -1 \end{pmatrix} 
    \end{aligned}
  \]
and:
  \[
    \begin{aligned}
      \sigma_{3 \to 2} \sigma_{4 \to 3} \sigma_{3 \to 1} \sigma_{2 \to 3} \sigma_{3 \to 4} \sigma_{1 \to 3} &= \begin{pmatrix} 1 & 2 \\ 0 & 1 \end{pmatrix} \,; &&&
    \sigma_{3 \to 1} \sigma_{4 \to 3} \sigma_{3 \to 2} \sigma_{1 \to 3} \sigma_{3 \to 4} \sigma_{2 \to 3} &= \begin{pmatrix} 1 & -2 \\ 0 & 1 \end{pmatrix} \,; \\
      \sigma_{2 \to 3} \sigma_{4 \to 2} \sigma_{2 \to 1} \sigma_{3 \to 2} \sigma_{2 \to 4} \sigma_{1 \to 2} &= \begin{pmatrix} 1 & 0 \\ 2 & 1 \end{pmatrix} \,; &&&
    \sigma_{2 \to 1} \sigma_{4 \to 2} \sigma_{2 \to 3} \sigma_{1 \to 2} \sigma_{2 \to 4} \sigma_{3 \to 2} &= \begin{pmatrix} 1 & 0 \\ -2 & 1 \end{pmatrix} \,.
    \end{aligned}
  \]
  Consequently, the action of $\sigma_{i \to j}$ on $\PP^1(\FF_p)$ is transitive, and more specifically any point $[r:s] \in \PP^1(\FF_p)$, where $r,s \in \ZZ$, may be mapped to $[1:1]$ using a word of length $O(|r|+|s|)$ or $O(\log p)$ in $\sigma_{i \to j}$.
\end{lemma}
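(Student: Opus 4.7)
Each $\sigma_{i \to j}$ is a Möbius transformation of $\ell$, fully determined by its values on any three points. From the analogue of \eqref{eq:sigma-props} for general $(i,j)$, $\sigma_{i \to j}$ fixes $A_{ij}$ and $A_{kl}$ (where $\{k,l\} = \{1,2,3,4\} \setminus \{i,j\}$) and sends $A_{ik} \mapsto A_{jk}$, $A_{il} \mapsto A_{jl}$, which over-determines the map. My strategy is to verify the seven matrix identities by evaluating the action on three specific points in the coordinates $A_{12} = \infty$, $A_{13} = 0$, $A_{23} = 1$, and then to deduce transitivity and the word-length claims by a Euclidean-style argument.

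For the three 3-term identities I would argue combinatorially, by tracing $A_{12}, A_{13}, A_{23}$ through the product: at every step the image is one of the six special points $A_{mn}$ and is determined by the properties above. For instance, under $\sigma_{4 \to 1} \sigma_{1 \to 2} \sigma_{2 \to 4}$ the trace gives $A_{12} \to A_{14} \to A_{24} \to A_{12}$, $A_{13} \to A_{13} \to A_{23} \to A_{23}$, and $A_{23} \to A_{34} \to A_{34} \to A_{13}$. The composition therefore sends $(\infty, 0, 1)$ to $(\infty, 1, 0)$ and so equals $z \mapsto 1 - z$, matching $\begin{pmatrix} -1 & 1 \\ 0 & 1 \end{pmatrix}$; the other two 3-term identities are analogous.

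For the six-term identities the naive trace does not remain in $\{A_{mn}\}$, so I would fall back on direct algebra. Set $u := A_{34}$ and $v := A_{24}$; the involution $\tau$ from the proof of Lemma \ref{lem:safe-block} is trace-free and satisfies $\tau(1) = A_{14}$, which forces $A_{14} = u(1-v)/(1-u)$. Each $\sigma_{i \to j}$ then admits an explicit Möbius-transformation formula in $u,v$ from its two fixed points and one further specified image. Substituting these into the first six-term product and multiplying out the resulting $2 \times 2$ matrices, the parameters $u, v$ cancel in a non-trivial way, leaving $\begin{pmatrix} 1 & 2 \\ 0 & 1 \end{pmatrix}$. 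This algebraic cancellation is the main obstacle, though it is routine. The remaining three six-term identities then follow by a symmetry argument: relabelling $\{1,2,3,4\}$ by the transpositions $(1\,2)$, $(2\,3)$, $(1\,3)$ carries the first six-term product onto the other three respectively, and these relabellings act on $\ell$ by the Möbius transformations $R$, $S$, $V$ supplied by the 3-term identities (since they permute $A_{12}, A_{13}, A_{23}$ and hence $\infty, 0, 1$). Each remaining identity is therefore a conjugate of the first; for example $R \begin{pmatrix} 1 & 2 \\ 0 & 1 \end{pmatrix} R^{-1} = \begin{pmatrix} 1 & -2 \\ 0 & 1 \end{pmatrix}$ and $S \begin{pmatrix} 1 & 2 \\ 0 & 1 \end{pmatrix} S^{-1} = \begin{pmatrix} 1 & 0 \\ 2 & 1 \end{pmatrix}$.

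Finally, the transitivity and word-length bounds follow by a Euclidean-style argument. From the matrices above we generate the translations $z \mapsto z \pm 2$ together with the $S_3$-action on $\{0, 1, \infty\}$. Given $[r:s] \in \PP^1(\FF_p)$, I would apply translations to reduce $r$ modulo $2s$, swap $r \leftrightarrow s$ via $\begin{pmatrix} 0 & 1 \\ 1 & 0 \end{pmatrix}$, and iterate; each full step reduces $\max(|r|,|s|)$ by a constant factor, giving $O(\log p)$ in the worst case, with a final $S_3$-adjustment placing the result at $[1:1]$. The weaker bound $O(|r|+|s|)$ for integer coordinates follows from applying single shifts one at a time.
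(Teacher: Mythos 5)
Your overall approach matches the paper's: trace the points $A_{mn}$ combinatorially to get the three short identities, compute the six-term products by explicit algebra, and finish with a Euclidean-type reduction. Two comments, one positive and one a genuine gap.

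The symmetry trick is a nice refinement the paper does not use. The paper computes all four six-term products directly by multiplying explicit $2\times 2$ matrices in a fixed parameterization; you reduce to a single computation by observing that relabelling $\{1,2,3,4\}$ permutes the $A_{ij}$, hence changes the chosen coordinates on $\ell$ by a M\"obius transformation $P$, so the product for the relabelled system equals $P^{-1} M P$ in the original coordinates. This is correct and shortens the verification. However, the permutation carrying the first six-term word to the fourth is not the transposition $(1\,3)$ but the $3$-cycle $(1\,3\,2)$: under $(1\,3\,2)$ the word $\sigma_{3\to 2}\sigma_{4\to 3}\sigma_{3\to 1}\sigma_{2\to 3}\sigma_{3\to 4}\sigma_{1\to 3}$ becomes $\sigma_{2\to 1}\sigma_{4\to 2}\sigma_{2\to 3}\sigma_{1\to 2}\sigma_{2\to 4}\sigma_{3\to 2}$ as required, while $(1\,3)$ gives a different word. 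Since a $3$-cycle is not an involution, you also need the conjugation direction to be $P^{-1}MP$ (not $PMP^{-1}$); worked out carefully this does land on $\begin{pmatrix}1&0\\-2&1\end{pmatrix}$.

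The genuine gap is in the $O(\log p)$ claim. You argue that each ``full step'' of a division-with-remainder Euclid reduces $\max(|r|,|s|)$ by a constant factor, hence $O(\log p)$ steps suffice. But the only generators supplied by the seven identities are translations by $\pm 2$, the inversion $z\mapsto 1/z$, and the $S_3$-action on $\{0,1,\infty\}$: a single application of $\begin{pmatrix}1&\pm 2\\ 0&1\end{pmatrix}$ shifts by only $\pm 2$, so one Euclidean division round ``reduce $r$ mod $2s$'' costs $\Theta(|r|/|s|)$ word-letters, not $O(1)$. Summing those costs over the rounds gives $O(\max(|r|,|s|))$ in the worst case (for instance $[r:1]$ with $r$ large), which is exactly the paper's $O(|r|+|s|)$ bound and can be as bad as $O(p)$. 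The $O(\log p)$ word-length claim is true but requires the quite different input that the Cayley graph of $\PSL_2(\FF_p)$ on these generators has logarithmic diameter, which the paper obtains from expander-graph results (Lubotzky); a direct Euclidean argument does not give it.
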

\begin{proof}
  The first three identities may be verified using only \eqref{eq:sigma-props} (and the corresponding statement for the other $\sigma_{i\to j}$) to deduce the action on $A_{12}$,$A_{13}$,$A_{23}$ (which correspond to $\infty,0,1$).

  For the next four, that approach does not appear to be sufficient.  Our strategy is just pick coordinates and compute explicitly.

  We can choose projective coordinates for $\PP(V^\ast)$ such that\footnote{
    To see this is possible, note that we can certainly choose a projective transformation sending $X_1$, $X_2$, $X_3$ to $[1:0:0]$, $[0:1:0]$ and $[0:0:1]$ as $X_1,X_2,X_3$ are not collinear. In these coordinates, $\ell = \{(x,y,z) \in \PP(V^\ast) \colon ax+by+cz = 0\}$ for some $a,b,c \in \FF_p \setminus \{0\}$ (as none of $X_1, X_2, X_3$ lie on $\ell$); by further rescaling we can ensure $a=b=c=1$.  This convention now differs from the one above -- which is somewhat more convenient -- by a further fixed change of coordinates.
  }
   $X_1 = [0:0:1]$, $X_2 = [1:0:1]$, $X_3 = [0:-1:1]$ and $\ell = \{ [x:y:z] \colon z = 0 \}$.  We write $X_4 = [a:b:1]$ for some $a,b \in \FF_p$: by our collinearity assumptions, this is possible, and furthermore $a \ne 0$, $b \ne 0$ and $a-b-1 \ne 0$.

  Using the information \eqref{eq:sigma-props}, we can compute the matrices of $\sigma_{i\to j}$ explicitly as
  \begin{align*}
    \sigma_{1 \to 2} &= \begin{pmatrix} a-b-1 & a \\ 0 & b \end{pmatrix} &
      \sigma_{1 \to 3} &= \begin{pmatrix} b & 0 \\ b & 1+b-a \end{pmatrix} &
    \sigma_{1 \to 4} &= \begin{pmatrix} a-1 & -a \\ b & -b-1 \end{pmatrix} \\
      \sigma_{2 \to 3} &= \begin{pmatrix} 0 & a \\ -b & a+b \end{pmatrix} &
        \sigma_{2 \to 4} &= \begin{pmatrix} a & 0 \\ b & 1 \end{pmatrix} &
    \sigma_{3 \to 4} &= \begin{pmatrix} -1 & a \\ 0 & b \end{pmatrix}
  \end{align*}
  with the other six following from the relation $\sigma_{i \to j} = \sigma_{j \to i}^{-1}$.  Verifying the remaining formulae is now just an exercise in multiplying (projective) matrices.

  By a modified version of Euclid's algorithm, any point $[r:s] \in \PP^1$ may be reduced to one of $\infty,0,1$ using the matrices $\begin{pmatrix} 1 & \pm 2 \\ 0 & 1 \end{pmatrix}$, $\begin{pmatrix} 1 & 0 \\ \pm 2 & 1 \end{pmatrix}$, in $O(|r|+|s|)$ steps (the worst case being something like $[r:1]$ for $r$ a large integer; the typical case is better). Alternatively, this can be done in $O(\log p)$ steps, since the Cayley graph on $\PSL_2(\FF_p)$ with these generators has diameter $O(\log p)$, as a corollary of celebrated results concerning expander graphs; see \cite{lubotzky}.  Finally, one of the first three matrices moves the end point to $[1:1]$, if necessary.  
\end{proof}

We should also verify that the values of $[r:s]$ representing the original point $X_5$ are not too large.
\begin{lemma}
  \label{lem:find-alpha}
  Suppose $X_i = [x_i:y_i:z_i]$  for $1 \le i \le 6$ where $x_i,y_i,z_i$ are integers, and $X_5 \ne X_6$.  Then in the coordinates on $\ell$ described in Lemma \ref{lem:euclid}, the point $X_5 \in \ell$ is identified with $[r:s]$ where $r,s$ are integers with $|r|,|s| = O(max\{|x_i|,|y_i|,|z_i| \colon 1 \le i \le 6\}^6)$.
\end{lemma}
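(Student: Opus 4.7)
The plan is to express the coordinate of $X_5$ on $\ell$ (in the normalization of Lemma \ref{lem:euclid}) via a cross-ratio formula with a carefully chosen auxiliary point, so that intrinsic cancellations reduce it to a ratio of two degree-$6$ polynomials in the coordinates of the $X_i$.

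By the construction in Lemma \ref{lem:euclid}, the coordinate of $X_5$ is the value at $X_5$ of the M\"obius transformation sending $A_{12}\to\infty$, $A_{13}\to 0$, $A_{23}\to 1$, which is the cross-ratio $(X_5, A_{23}; A_{13}, A_{12})$. For any $Q\in \PP(V^\ast)\setminus\ell$, this can be written as
\[
\frac{\det(Q, X_5, A_{13})\,\det(Q, A_{23}, A_{12})}{\det(Q, X_5, A_{12})\,\det(Q, A_{23}, A_{13})}.
\]
I would then substitute $A_{ij} = \lambda_j X_i - \lambda_i X_j$, where $\lambda_k := \det(X_5, X_6, X_k)$; this is well-defined since $X_5\ne X_6$, and one checks directly that $A_{ij}$ so defined lies on $\ell$. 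Writing $M := \max\{|x_i|,|y_i|,|z_i| : 1 \le i \le 6\}$, each $\lambda_k$ and each $d_{ijk} := \det(X_i, X_j, X_k)$ is a $3\times 3$ integer determinant of absolute value $O(M^3)$.

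The key step is to take $Q = X_3$: then in any determinant containing $A_{13}$ or $A_{23}$, the $X_3$-term produced when expanding $A_{ij}$ vanishes against the pre-existing $X_3$ column. A short computation yields
\[
\det(X_3, X_5, A_{13}) = \lambda_3\, d_{351},\qquad \det(X_3, X_5, A_{12}) = \lambda_2\, d_{351} - \lambda_1\, d_{352},
\]
\[
\det(X_3, A_{23}, A_{12}) = \lambda_2 \lambda_3\, d_{321}, \qquad \det(X_3, A_{23}, A_{13}) = \lambda_3^2\, d_{321}.
\]
Cancelling the common factor $\lambda_3^2\, d_{321}$ (nonzero, since $X_3\notin \ell$ and $X_1,X_2,X_3$ are not collinear under the general position assumption on $X_1,\ldots,X_4$) leaves the coordinate of $X_5$ equal to
\[
\frac{\lambda_2\, d_{351}}{\lambda_2\, d_{351} - \lambda_1\, d_{352}},
\]
realising $X_5$ as $[r : s]$ with $r, s$ integers of absolute value $O(M^6)$, as required.

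The only real design choice is picking $Q$: a generic auxiliary point gives a ratio of degree ${\sim}12$ polynomials, from which a common factor would have to be extracted by hand. Taking $Q = X_3$ makes the cancellation immediate via vanishing of determinants with a repeated column, which seems to be the shortest route to the $O(M^6)$ bound; by symmetry other choices such as $Q=X_1$ or $Q=X_2$ would also work.
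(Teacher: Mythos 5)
Your proof is correct, and it takes a genuinely different route from the paper's. The paper simply exhibits the formula
\[
r = |X_1 X_3 X_5|\,|X_2 X_5 X_6|, \qquad s = |X_1 X_2 X_5|\,|X_3 X_5 X_6|
\]
and verifies it indirectly: by noting it is invariant under rescaling each $X_i$ and under projective change of coordinates, and then checking it in the specific normalized coordinate system $X_1=[0:0:1]$, $X_2=[1:0:1]$, $X_3=[0:-1:1]$, $\ell = \{z=0\}$ from Lemma \ref{lem:euclid}. You instead \emph{derive} the coordinate from first principles via the determinantal cross-ratio formula, with the key observation that choosing the auxiliary point $Q = X_3$ makes the degree-$12$ numerator and denominator collapse to degree $6$ by repeated-column vanishing. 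Your derivation yields $r = \lambda_2 d_{351}$ (which equals the paper's $r$ exactly, after cyclic permutation of determinant columns) and $s = \lambda_2 d_{351} - \lambda_1 d_{352}$; the latter agrees with the paper's $s = |X_1 X_2 X_5|\,|X_3 X_5 X_6|$ by a Pl\"ucker identity, though for the $O(M^6)$ bound you do not need this simplification since a difference of two degree-$6$ products is still $O(M^6)$. The paper's approach is shorter on the page but requires one to have guessed the formula in advance; your derivation shows where it comes from and makes the invariance properties automatic rather than something to verify separately.
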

\begin{proof}
  Write $|X_i X_j X_k|$ for the determinant of the $3 \times 3$ matrix whose columns are the vectors $(x_i,y_i,z_i)$, $(x_j,y_j,z_j)$, $(x_k,y_k,z_k)$.  Then we set
  \begin{align*}
    r &= |X_1 X_3 X_5| \, |X_2 X_5 X_6| \\
    s &= |X_1 X_2 X_5| \, |X_3 X_5 X_6| 
  \end{align*}
  and claim that $[r:s]$ are the coordinates of $X_5$ in the coordinate system of Lemma \ref{lem:euclid}.  One can verify that the definition of $[r:s]$ is invariant under rescaling any $(x_i,y_i,z_i)$, or under a change of projective coordinates on $\PP^2$.  Also, it is straightforward to verify this claim when $X_1 = [0:0:1]$, $X_2 = [1:0:1]$, $X_3 = [0:-1:1]$ and $\ell = \{ [x:y:z] \colon z = 0 \}$ (i.e.~in the coordinates on $\PP^2$ used in the proof of Lemma \ref{lem:euclid}).  It follows that the claim holds in general.
\end{proof}

We have one further technical issue to consider.  It will be convenient to construct the block that implements $B_{i \to j}$ only in cases where none of the triples $X_i,X_j,X_5$ or $X_i,X_j,X_6$ for $1 \le i < j \le 4$ is collinear.  This is not too onerous, as the first time we obtain a set of points where one of these triples is collinear, we can just stop and the conclusion of Lemma \ref{lem:game} will be satisfied.  However, for this to work we need to check that, when this happens, we are not in one of the degenerate cases where $X_5=X_6$.

We therefore check the following lemma.

\begin{lemma}
  \label{lem:no-final-coincide}
  Suppose $X_1,\dots,X_6$ and $\ell$ are as above and $X'_5$, $X'_6$ are the points returned by the block move $B_{1\to 2}(X_1,\dots,X_6; \ell)$.  Suppose also that no triple $X_i,X_j,X_5$ or $X_i,X_j,X_6$ for $1 \le i < j \le 4$ is collinear, but that some triple $X_i,X_j,X'_5$ or $X_i,X_j,X'_6$ is collinear.  Then $X'_5 \ne X'_6$.
\end{lemma}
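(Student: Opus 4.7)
My plan is to argue by contradiction. Suppose $X'_5 = X'_6$. Combined with the hypothesis that some triple $X_i, X_j, X'_5$ or $X_i, X_j, X'_6$ is collinear, this forces $X'_5 = X'_6 = A_{ij}$ for some $1 \le i < j \le 4$, where as in the proof of Lemma \ref{lem:safe-block} we write $A_{ij} := X_i X_j \cap \ell$.

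Recall from that proof that $X'_5 = \sigma_{1\to 2}(X_5)$ and $X'_6 = \sigma_{2\to 1}(X_6)$, where the $\sigma_{i\to j}$ are M\"obius transformations of $\ell$ and $\sigma_{2\to 1} = \sigma_{1\to 2}^{-1}$. Inverting these, we obtain
\[
  X_5 = \sigma_{2 \to 1}(A_{ij}) \,, \qquad X_6 = \sigma_{1 \to 2}(A_{ij}) \,.
\]
The key tool is the explicit characterization \eqref{eq:sigma-props} of $\sigma_{1\to 2}$ via its action on the six points $A_{ab}$, together with the symmetric analogue for $\sigma_{2\to 1}$ obtained by swapping the roles of $X_1$ and $X_2$. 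I will use these to show that in every case at least one of $X_5, X_6$ must itself coincide with some $A_{pq}$, contradicting the hypothesis that no triple $X_p, X_q, X_5$ or $X_p, X_q, X_6$ is collinear.

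The case analysis is the bulk of the work, but it is routine. For $(i,j) \in \{(1,2), (3,4)\}$, both $\sigma_{1\to 2}$ and $\sigma_{2\to 1}$ fix $A_{ij}$, so immediately $X_5 = A_{ij}$. For $(i,j) = (1,3)$, one reads from \eqref{eq:sigma-props} that $X_6 = \sigma_{1\to 2}(A_{13}) = A_{23}$; for $(i,j) = (1,4)$, similarly $X_6 = \sigma_{1\to 2}(A_{14}) = A_{24}$. For $(i,j) \in \{(2,3), (2,4)\}$, the symmetric version of \eqref{eq:sigma-props} yields $X_5 = \sigma_{2\to 1}(A_{23}) = A_{13}$ or $X_5 = \sigma_{2\to 1}(A_{24}) = A_{14}$ respectively. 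In each of the six cases some $X_5$ or $X_6$ equals an $A_{pq}$, giving the required contradiction.

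I do not anticipate any significant obstacle: the whole argument rests on the observation that $\sigma_{1\to 2}$ and $\sigma_{2\to 1}$ permute the six special points $A_{pq}$ in a controlled way, so that preimages of any $A_{ij}$ under these maps are themselves forced to lie in this finite set. Once the identities \eqref{eq:sigma-props} and their mirror image are written down, the contradiction drops out mechanically.
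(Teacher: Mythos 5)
Your proof is correct and takes essentially the same approach as the paper: both argue by contradiction from $X'_5 = X'_6 = A_{ij}$ and use the characterization \eqref{eq:sigma-props} of $\sigma_{1\to2}$ (and its mirror for $\sigma_{2\to1}$) to show that in each of the six cases at least one of $X_5, X_6$ must be some $A_{pq}$, contradicting the hypothesis.
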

\begin{proof}
  Suppose for contradiction that $X'_5 = X'_6 = A_{ij}$ for some $1 \le i < j \le 4$.  Then $X_5 = \sigma_{1\to 2}^{-1}(A_{ij})$ and $X_6 = \sigma_{2\to 1}^{-1}(A_{ij})$.  By \eqref{eq:sigma-props}, if either $i=2$ or $j=2$, or $i=3$ and $j=4$, then $X_5 = A_{i'j'}$ for some $1 \le i'<j'\le 4$, which is a contradiction.  Similarly, if either $i=1$ or $j=1$, or $i=3$ and $j=4$, then $X_6 = A_{i'j'}$ for some $1 \le i'<j'\le 4$, which is again a contradiction.  Since these cases exhaust all possible pairs $i,j$, the result follows.
\end{proof}

\subsection{Implementing a block move}
\label{subsec:build-block}

In this subsection we describe a sequence of $\CS$, $\MERGE$ and $\TRIVIAL$ operations that have the effect of a block transformation $B_{1 \to 2}$.  This is the last but most central ingredient in the proof of Theorem \ref{thm:positive}.

In the course of the argument in Section \ref{subsec:block-strategy}, we may need to consider intermediate configurations $(X_1,\dots, X_6) \in \PP^2$ for which $X_5 = X_6$.  Typically we do not expect this case to arise, but it would be onerous to try to avoid it in general.  Also, it is not true that in such cases we are immediately done by some easy Cauchy--Schwarz technique as in Section \ref{sec:63}: it appears this degeneracy is not one we can use to our advantage.

To handle this, we need to build more slack into our linear datum.  We again set $V = \FF_p^3$.

\begin{definition}
  \label{def:augmented}
  Let $\phi_1,\dots,\phi_6 \colon V \to \FF_p$ be non-zero linear forms, with $[\phi_1],\dots,[\phi_4]$ in general position, and let $\ell$ be a subspace of dimension $2$ in $V^\ast$ containing $\phi_5$, $\phi_6$ but none of $\phi_1,\dots,\phi_4$.  An \emph{augmented datum representing $(\phi_1,\dots,\phi_6;\ell)$} is a linear datum $\Psi = \left(V \oplus \FF_p, (W_i)_{i\in I}, (\psi_i)_{i\in I}\right)$ where $I =[6]$, $W_i = \FF_p$ for each $1 \le i \le 4$ and $W_5 = W_6 = \FF_p^2$, and $\psi_i \colon V \oplus \FF_p \to W_i$ satisfy:
  \begin{itemize}
    \item for $1 \le i \le 4$ and any $(v,t) \in V \oplus \FF_p$ we have $\psi_i(v, t) = \phi_i(v)$;
    \item for $i = 5, 6$ and any $(v, t) \in V \oplus \FF_p$ we have $\psi_i(v, t) = (\phi_i(v), t + \chi_i(v))$ for some $\chi_i \in \ell$.
  \end{itemize}

  Also, the \emph{standard datum representing $(\phi_1,\dots,\phi_6)$} is just $\Psi' = \left(V, (W'_i)_{i \in I}, (\phi_i)_{i \in I}\right)$ where $I = [6]$ and $W'_i = \FF_p$ for each $i$, as in Lemma \ref{lem:game}.
\end{definition}
There are many roughly equally cryptic ways to phrase this rigorously.  Geometrically, what has happened is that we have embedded our projective plane $\PP^2 = \PP(V^\ast)$ in a three-dimensional space $\PP^3 = \PP((V \oplus \FF_p)^\ast)$, and each of $\psi_1,\dots,\psi_4$ corresponds to the respective point $[\phi_1],\dots,[\phi_4]$ in the embedded copy of $\PP^2$. Meanwhile, $\psi_5$ and $\psi_6$ correspond to lines in $\PP^3$ whose intersections with the embedded $\PP^2$ are $[\phi_5],[\phi_6]$, and whose canonical projections onto the embedded $\PP^2$ are both (contained in, but secretly equal to) the line $\ell$.

In the case $[\phi_5] \ne [\phi_6]$, we don't really need this extra dimension but it does no harm, as the following lemma will show.  When $[\phi_5] = [\phi_6]$, the situation has genuinely changed because the augmented datum retains information about $\ell$ whereas the standard one would not.

\begin{lemma}
  \label{lem:make-augmented}
  Let $\phi_1,\dots,\phi_6 \colon V \to \FF_p$ be linear forms as in Definition \ref{def:augmented}, and suppose further that $[\phi_5] \ne [\phi_6]$ and $\ell = \spn(\phi_5,\phi_6)$.  Let $\Psi_1 = (V \oplus \FF_p, (W_i)_{i \in I}, (\psi_i)_{i \in I})$ be an augmented datum representing $(\phi_1,\dots,\phi_6;\ell)$ and let $\Psi_2 = (V, (W'_i)_{i \in I}, (\phi_i)_{i \in I})$ be the standard datum representing $(\phi_1,\dots,\phi_6)$.  Then $\Psi_1$ dominates $\Psi_2$ respecting $(j,j)$ and $\Psi_2$ dominates $\Psi_1$ respecting $(j, j)$, for any $1 \le j \le 4$, and with exponent $1$ in each case.
\end{lemma}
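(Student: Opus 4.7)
The plan is to produce morphisms of linear data in both directions and invoke Proposition \ref{prop:trivial-dominate}. The direction $\Psi_1$ dominates $\Psi_2$ is almost free: take $\theta \colon V \oplus \FF_p \to V$ to be the projection $(v,t) \mapsto v$, set $\sigma_i = \id$ for $1 \le i \le 4$, and take $\sigma_i \colon \FF_p^2 \to \FF_p$ to be projection onto the first coordinate for $i = 5,6$. The compatibility $\phi_i \circ \theta = \sigma_i \circ \psi_i$ holds by inspection, and since $\sigma_j = \id$ for every $1 \le j \le 4$, Proposition \ref{prop:trivial-dominate} yields domination respecting $(j,j)$ with exponent $1$ for any such $j$.

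For the reverse direction, which is the one with content, I would seek $\theta \colon V \to V \oplus \FF_p$ of the form of a linear section $\theta(v) = (v, T(v))$ for some $T \in V^\ast$ to be chosen. For $i \le 4$ the compatibility $\psi_i \circ \theta = \sigma_i \circ \phi_i$ is then satisfied by $\sigma_i = \id$ regardless of $T$. For $i = 5,6$ it requires that $(\phi_i(v),\, T(v) + \chi_i(v))$ equal $\sigma_i(\phi_i(v))$ for some linear $\sigma_i \colon \FF_p \to \FF_p^2$; equivalently, that $T + \chi_i = \alpha_i \phi_i$ as elements of $V^\ast$, for some scalar $\alpha_i \in \FF_p$. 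Subtracting the two conditions gives $\chi_5 - \chi_6 = \alpha_5 \phi_5 - \alpha_6 \phi_6$, and since $\chi_5, \chi_6 \in \ell = \spn(\phi_5, \phi_6)$ and $\phi_5,\phi_6$ are linearly independent (by $[\phi_5] \ne [\phi_6]$), there exist unique $\alpha_5, \alpha_6$ making this hold. Then $T := \alpha_5 \phi_5 - \chi_5$ and $\sigma_i(x) := (x, \alpha_i x)$ for $i = 5, 6$ assemble into a morphism of linear data.

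To apply Proposition \ref{prop:trivial-dominate} in this direction one must check its hypotheses. All $\psi_i$ and $\psi'_i$ are visibly surjective from the definitions. The map $\theta$ is not surjective, but inspection of the proof of Proposition \ref{prop:trivial-dominate} shows it only needs coset representatives of $\theta(V)^n$ in $(V \oplus \FF_p)^n$ that lie inside $(\ker \psi_j)^n$ for the distinguished index $j$. For $1 \le j \le 4$ we have $\{0\} \oplus \FF_p \subseteq \ker \psi_j$, and this line is complementary to $\theta(V)$ in $V \oplus \FF_p$; so such representatives exist, and the conclusion follows with exponent $1$.

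The only conceptual obstacle is constructing the section $T$, and it is precisely the hypothesis $\ell = \spn(\phi_5, \phi_6)$ with $[\phi_5] \ne [\phi_6]$ that makes the linear system for $\alpha_5, \alpha_6$ solvable. When $[\phi_5] = [\phi_6]$ the proper inclusion $\spn(\phi_5, \phi_6) \subsetneq \ell$ would in general make it unsolvable, which is exactly why the augmented datum is needed in that degenerate case, to retain the information about $\ell$ that the standard datum loses.
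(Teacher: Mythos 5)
Your proof is correct and follows essentially the same route as the paper's: the easy direction via the coordinate projection, and the harder direction by choosing a linear section $\imath(v) = (v, T(v))$ with $T$ determined by solving $T + \chi_i \in \spn(\phi_i)$ for $i=5,6$, which is solvable precisely because $\phi_5, \phi_6$ span $\ell$. Your remarks on coset representatives in $(\ker\psi_j)^n$ are already subsumed by Proposition \ref{prop:trivial-dominate} itself (surjectivity of $\psi_j \circ \theta$ follows automatically since $\sigma_j = \id$ and $\psi'_j$ is surjective by definition), so they are harmless but unnecessary.
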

\begin{proof}
  Both directions are by $\TRIVIAL$ steps (i.e.~Proposition \ref{prop:trivial-dominate}).  First we show that $\Psi_1$ dominates $\Psi_2$ trivially (respecting $1 \le j \le 4$).  Indeed, we may consider the surjective maps $\theta \colon V \oplus \FF_p \to V$ given by $\theta(v,t) = v$, and $\sigma_i \colon W_i \to W'_i$ given by the identity if $1 \le i \le 4$ and $\sigma_5(x,y) = \sigma_6(x,y) = x$.  It is immediate from our hypotheses that $\phi_i \circ \theta = \sigma_i \circ \psi_i$ for each $i$, and the claim follows.

  To show $\Psi_2$ dominates $\Psi_1$ trivially (respecting $1 \le j \le 4$), we consider an injective map $\imath \colon V \to V \oplus \FF_p$ given by $\imath(v) = (v, \mu(v))$ for some $\mu \in V^\ast$ which will have to be chosen carefully, together with $\tau_i \colon W'_i \to W_i$ given by the identity if $1 \le i \le 4$ and to be specified when $i=5,6$.  It suffices to show that $\psi_i \circ \imath = \tau_i \circ \phi_i$ for each $1 \le i \le 6$, under suitable choices.  Note that this is already immediate for $1 \le i \le 4$, given our hypotheses.  For $i = 5,6$, we need precisely that
  \begin{align*}
    \psi_i(v, \mu(v)) = (\phi_i(v), \mu(v) + \chi_i(v)) = \tau_i(\phi_i(v))
  \end{align*}
  for any $v \in V$.  If $\mu + \chi_i \in \spn(\phi_i)$ for $i=5,6$ as elements of $V^\ast$, so $\mu + \chi_i = \gamma_i \phi_i$ for some $\gamma_5,\gamma_6 \in \FF_p$, we could define $\tau_i(x) = (x, \gamma_i x)$ and the equation would be satisfied.
  
  We check that this holds for an appropriate choice of $\mu$.  Because $[\phi_5] \ne [\phi_6]$, $\phi_5,\phi_6$ is a basis for $\ell$ and so we may write
  \[
    \chi_5 - \chi_6 = \alpha \phi_5 + \beta \phi_6
  \]
  for some $\alpha,\beta \in \FF_p$.  Define $\mu = \alpha \phi_5 - \chi_5 = -\beta \phi_6 - \chi_6$; hence, $\chi_5 + \mu \in \spn(\phi_5)$ and $\chi_6 + \mu \in \spn(\phi_6)$ as required.
\end{proof}

Again it may be instructive to think about this geometrically.  In the first part, we used the canonical embedding of $\PP^2$ into $\PP^3$ discussed above to get our morphism of linear data.  In the second part, we chose a particular non-standard projection $\PP^3 \to \PP^2$ that collapses the line corresponding to $\psi_5$ onto $[\phi_5]$ and that corresponding to $\psi_6$ onto $[\phi_6]$.

Finally, we can state a lemma which is the workhorse of the whole argument.
\begin{lemma}
  \label{lem:key-lemma}
  Let $\phi_1,\dots,\phi_6$ and $\ell$ be as in Definition \ref{def:augmented}, and let $\Psi$ be an augmented datum representing $(\phi_1,\dots,\phi_6; \ell)$.  Also, let $\phi'_5,\phi'_6 \colon V \to \FF_p$ be linear forms such that
  \[
    B_{1 \to 2}([\phi_1],\dots,[\phi_6]; \ell) = ([\phi_1],\dots,[\phi_4], [\phi'_5], [\phi'_6]; \ell) \, .
  \]
  Then there exists an augmented datum $\Psi'$ representing $(\phi_1,\dots,\phi_4,\phi'_5,\phi'_6;\ell)$, such that $\Psi'$ dominates $\Psi$ respecting $(j,j)$ for any $1 \le j \le 4$, and with exponent $1/16$.
\end{lemma}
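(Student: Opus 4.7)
Since the target exponent is $1/16 = (1/2)^4$, the argument must use exactly four $\CS$ steps, together with supporting $\MERGE$ and $\TRIVIAL$ operations. The geometric decomposition $\sigma_{1\to 2} = $ (perspectivity from $X_1$ onto the line $X_3X_4$) $\circ$ (perspectivity from $X_2$ back onto $\ell$), together with the symmetric decomposition of $\sigma_{2\to 1}$, suggests implementing each of the four perspectivities (two for $X_5 \mapsto X_5'$, two for $X_6 \mapsto X_6'$) by a single $\CS$ step, interleaved with a $\MERGE$ and isomorphism (handled by a $\TRIVIAL$ step).

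First I would do a preparatory $\MERGE$ of $\psi_5$ with $\psi_6$ if required, and make an initial $\TRIVIAL$ reparametrization that picks convenient coordinates: using the general position of $X_1,\dots,X_4$, I would place $X_1=[0{:}0{:}1]$, $X_2=[1{:}0{:}1]$, $X_3=[0{:}-1{:}1]$, $X_4=[a{:}b{:}1]$ and $\ell=\{z=0\}$, following the calculation in Lemma \ref{lem:euclid}. This makes explicit the formulas for the points $Y,Z,X_5',X_6'$ in terms of $\phi_5,\phi_6$ and $a,b$. Next I would apply $\CS_{\phi_1}$ to the augmented datum. In the resulting fiber product over $\phi_1$, the two copies of $\phi_5$ (one in each half) both restrict to a common value along the $\ker\phi_1$ fiber, so after a $\MERGE$ of the copies of $\phi_3$ and of $\phi_4$ across the two halves, one copy of $\phi_5$ can be identified, up to the $\FF_p$-factor carried by the augmented datum, with a linear form representing the auxiliary point $Y = X_1X_5 \cap X_3X_4$. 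I would then apply $\CS_{\phi_2}$ and $\MERGE$ in a dual fashion to project $Y$ through $X_2$ onto $\ell$, producing $\phi_5'$. Repeating the same construction with $X_1$ and $X_2$ interchanged handles $X_6 \mapsto X_6'$.

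Throughout, the extra $\FF_p$-direction in the augmented datum plays a crucial role: it stores the line $\ell$ as a genuine datum even when $[\phi_5]=[\phi_6]$, and at each $\CS$ step the auxiliary characters $\chi_5,\chi_6$ transform by a predictable linear rule. At the end, after a final $\TRIVIAL$ change-of-basis on the ambient space, I would exhibit the output datum as an augmented datum representing $(\phi_1,\dots,\phi_4,\phi_5',\phi_6';\ell)$ with some characters $\chi_5',\chi_6'\in\ell$.

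The main obstacle is the bookkeeping: after four $\CS$ steps the ambient vector space is an iterated fiber product corresponding, in the graph formalism of Section \ref{sec:formalisms}, to a subspace of $(V\oplus \FF_p)^{\{0,1\}^4}$ with edges labelled by appropriate $\phi_i$'s, and one must check that the $\MERGE$ operations in between produce, on each fiber, a linear datum that reduces (up to a $\TRIVIAL$ isomorphism) to the claimed augmented datum. This reduces to a finite linear algebra verification: compute, in the coordinates above, the matrix of the relevant surjection at each stage, and confirm that its image is spanned by the forms $\phi_1,\dots,\phi_4,\phi_5',\phi_6'$ together with the $\FF_p$ component. The general-position hypothesis is used at each step to guarantee surjectivity of the maps produced by $\MERGE$, and the hypothesis that $\ell$ avoids $[\phi_1],\dots,[\phi_4]$ ensures that the output forms $\phi_5',\phi_6'$ stay in $\ell$, as required by the definition of an augmented datum.
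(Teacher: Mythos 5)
Your high-level intuition --- that the block move should decompose into perspectivities, and that the exponent $1/16$ forces exactly four $\CS$ steps --- is on the right track. But the specific Cauchy--Schwarz schedule you propose cannot work, because you apply $\CS$ to $\phi_1$ and $\phi_2$, and these are forms that \emph{must survive} into the output datum.

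Recall from Proposition \ref{prop:cs} that $\CS_j(\Psi)$ has index set $I' = \{i_0, i_1 : i \in I, i\ne j\}$, so the index $j$ is eliminated entirely, and the resulting domination only ``respects'' pairs $(i, i_0)$ with $i\ne j$. The conclusion of Lemma \ref{lem:key-lemma} demands domination respecting $(j,j)$ for \emph{all} $1\le j\le 4$; in particular $\psi'_1 = \psi_1$ and $\psi'_2 = \psi_2$ must appear in $\Psi'$ with the same attached functions $f_1, f_2$. If your very first move is $\CS_{\phi_1}$ --- at which point there is only one copy of $\phi_1$ in the datum --- then $\phi_1$ and $f_1$ disappear and the chain of dominations respecting $(1,1)$ is broken irreparably; the same goes for $\phi_2$ once you also apply $\CS_{\phi_2}$. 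There is no $\MERGE$ or $\TRIVIAL$ step that can bring back an index that was consumed by Cauchy--Schwarz.

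The paper's proof is engineered precisely to avoid this. It applies $\CS$ only to $\phi_6$, to a merge of the two copies of $\phi_5$, to the copy $3_{01}$ of $\phi_3$, and to a merge of $4_{010}$ and $4_{011}$. The point is that the later $\CS$ steps act on copies of $\phi_3, \phi_4$ at vertices \emph{other} than the surviving vertex $0000$, so the forms $\phi_1,\dots,\phi_4$ remain intact at $0000$ throughout. The perspectivity geometry you are invoking --- projection from $X_1$ onto $X_3X_4$ and back from $X_2$ onto $\ell$ --- is real, but in the paper it is implemented not via $\CS$ but via the carefully chosen linear maps $\tau_1, \tau_2, \tau_3, \tau_4$ in Lemma \ref{lem:horrible-la}, which are deployed as a single final $\TRIVIAL$ step (a change of coordinates at each vertex of the pruned graph). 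Cauchy--Schwarz, by contrast, only duplicates variables and deletes a form; it has no mechanism to produce the projected forms directly, which is why one must leave $\phi_1, \phi_2$ untouched and instead sacrifice $\phi_5, \phi_6$ and spare copies of $\phi_3, \phi_4$, then reconstruct $\phi'_5, \phi'_6$ from the leftover forms at the other vertices by merging and reparametrizing.

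A secondary issue: your remark that after $\CS_{\phi_1}$ ``the two copies of $\phi_5$ both restrict to a common value along the $\ker\phi_1$ fiber'' is false as stated --- the fiber product constraint only identifies $\psi_1(v_0)$ with $\psi_1(v_1)$, not $\psi_5(v_0)$ with $\psi_5(v_1)$ --- so the mechanism by which you hope to extract a form representing $Y$ is not justified even setting aside the survivability problem.
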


The argument has roughly two phases.  In the first phase, our goal is to build a datum corresponding to the following graph of vector spaces over $V \oplus \FF_p$:
\begin{center}
  \begin{tikzpicture}
    \node[circle,draw] (0000) at (0,0) {$A$};
    \node[circle,draw] (1000) at (2,0) {$\phantom{B}$};
    \node[right] at ($(1000) + (0.5, 0)$) {$12 \sslash 34$};
    \node[circle,draw] (0100) at (0,2) {$\phantom{C}$};
    \node[above right] at ($(0100) + (0.3, 0.3)$) {$2 \sslash 1$};
    \node[circle,draw] (1100) at (2,2) {$\phantom{D}$};
    \node[right] at ($(1100) + (0.5, 0)$) {$12 \sslash 34$};
    \draw (0000) -- (1000) node[midway,below] {$6$};
    \draw (0100) -- (1100) node[midway,below] {$6$};
    \draw (0000) -- (0100) node[midway,left] {$5$};
    \draw (1000) -- (1100) node[midway,right] {$5$};

    \node[circle,draw] (0110) at (-2,2) {$\phantom{E}$};
    \node[left] at ($(0110) + (-0.5, 0)$) {$56 \sslash 12$};

    \draw (0100) -- (0110) node[midway,above] {$3$};

    \node[circle,draw] (0101) at (0,4) {$\phantom{G}$};
    \node[above] at ($(0101) + (0.0, 0.5)$) {$15 \sslash 26$};

    \node[circle,draw] (0111) at (-2,4) {$\phantom{F}$};
    \node[left] at ($(0111) + (-0.5, 0)$) {$56 \sslash 12$};

    \draw (0101) -- (0111) node[midway,above] {$3$};

    \draw (0100) -- (0101) node[midway,left] {$4$};
    \draw (0110) -- (0111) node[midway,left] {$4$};
  \end{tikzpicture}
\end{center}
Here the numbers next to the vertices denote two classes corresponding to those indices that get merged into $\psi'_5$ and those that get merged into $\psi'_6$ respectively.  The indices at vertex $A$ will turn into $\psi'_1,\dots,\psi'_4$.

It is not possible to construct this graph directly using $\CS$ steps, so we have to build a larger graph using $\CS$ steps and then prune it back using $\MERGE$ and $\TRIVIAL$ steps.

In the second phase, we need to apply a carefully chosen $\TRIVIAL$ operation to reduce this to a system $\Psi'$ defined on a single copy of $V \oplus \FF_p$.

\begin{proof}[Proof of Lemma \ref{lem:key-lemma}]
  We abbreviate $V \oplus \FF_p$ to $V'$.  Beginning with the augmented datum $\Psi = \left(V', (W_i)_{i \in [6]}, (\psi_i)_{i \in [6]}\right)$, we first apply $\CS_6$:
  \begin{center}
    \begin{tikzpicture}
      \node[circle,draw] (0) at (0,0) {$0$};
      \node[circle,draw] (1) at (2,0) {$1$};
      \draw (0) -- (1) node[midway,below] {$6$};
    \end{tikzpicture}
  \end{center}
  and then $\MERGE_{\{5_0,5_1\} \mapsto R}$ followed by $\CS_R$ to get:
  \begin{center}
    \begin{tikzpicture}
      \node[circle,draw] (00) at (0,0) {$00$};
      \node[circle,draw] (10) at (2,0) {$10$};
      \node[circle,draw] (01) at (0,2) {$01$};
      \node[circle,draw] (11) at (2,2) {$11$};
      \draw (00) -- (10) node[midway,below] {$6$};
      \draw (01) -- (11) node[midway,above] {$6$};
      \draw (00) -- (01) node[midway,left] {$5$};
      \draw (10) -- (11) node[midway,right] {$5$};
    \end{tikzpicture}
  \end{center}
  Now we do $\CS_{3_{01}}$ to get:
  \begin{center}
    \begin{tikzpicture}
      \node[circle,draw] (000) at (0,0) {$000$};
      \node[circle,draw] (100) at (2,0) {$100$};
      \node[circle,draw] (010) at (0,2) {$010$};
      \node[circle,draw] (110) at (2,2) {$110$};
      \draw (000) -- (100) node[midway,below] {$6$};
      \draw (010) -- (110) node[midway,above] {$6$};
      \draw (000) -- (010) node[midway,left] {$5$};
      \draw (100) -- (110) node[midway,right] {$5$};

      \node[circle,draw] (001) at (-2,0) {$001$};
      \node[circle,draw] (101) at (-4,0) {$101$};
      \node[circle,draw] (011) at (-2,2) {$011$};
      \node[circle,draw] (111) at (-4,2) {$111$};
      \draw (001) -- (101) node[midway,below] {$6$};
      \draw (011) -- (111) node[midway,above] {$6$};
      \draw (001) -- (011) node[midway,right] {$5$};
      \draw (101) -- (111) node[midway,left] {$5$};

      \draw (010) -- (011) node[midway,above] {$3$};
    \end{tikzpicture}
  \end{center}
  followed by $\MERGE_{\{4_{010},4_{011}\} \mapsto R}$ and then $\CS_R$ to get:
  \begin{center}
    \begin{tikzpicture}
      \node[circle,draw] (0000) at (0,0) {$0000$};
      \node[circle,draw] (1000) at (2,0) {$1000$};
      \node[circle,draw] (0100) at (0,2) {$0100$};
      \node[circle,draw] (1100) at (2,2) {$1100$};
      \draw (0000) -- (1000) node[midway,below] {$6$};
      \draw (0100) -- (1100) node[midway,above] {$6$};
      \draw (0000) -- (0100) node[midway,left] {$5$};
      \draw (1000) -- (1100) node[midway,right] {$5$};

      \node[circle,draw] (0010) at (-2,0) {$0010$};
      \node[circle,draw] (1010) at (-4,0) {$1010$};
      \node[circle,draw] (0110) at (-2,2) {$0110$};
      \node[circle,draw] (1110) at (-4,2) {$1110$};
      \draw (0010) -- (1010) node[midway,below] {$6$};
      \draw (0110) -- (1110) node[midway,above] {$6$};
      \draw (0010) -- (0110) node[midway,right] {$5$};
      \draw (1010) -- (1110) node[midway,left] {$5$};

      \draw (0100) -- (0110) node[midway,above] {$3$};

      \node[circle,draw] (0001) at (0,6) {$0001$};
      \node[circle,draw] (1001) at (2,6) {$1001$};
      \node[circle,draw] (0101) at (0,4) {$0101$};
      \node[circle,draw] (1101) at (2,4) {$1101$};
      \draw (0001) -- (1001) node[midway,above] {$6$};
      \draw (0101) -- (1101) node[midway,below] {$6$};
      \draw (0001) -- (0101) node[midway,left] {$5$};
      \draw (1001) -- (1101) node[midway,right] {$5$};

      \node[circle,draw] (0011) at (-2,6) {$0011$};
      \node[circle,draw] (1011) at (-4,6) {$1011$};
      \node[circle,draw] (0111) at (-2,4) {$0111$};
      \node[circle,draw] (1111) at (-4,4) {$1111$};
      \draw (0011) -- (1011) node[midway,above] {$6$};
      \draw (0111) -- (1111) node[midway,below] {$6$};
      \draw (0011) -- (0111) node[midway,right] {$5$};
      \draw (1011) -- (1111) node[midway,left] {$5$};

      \draw (0101) -- (0111) node[midway,above] {$3$};

      \draw (0100) -- (0101) node[midway,right] {$4$};
      \draw (0110) -- (0111) node[midway,left] {$4$};
    \end{tikzpicture}
  \end{center}

  Note that in these diagrams, the set of indices of the corresponding linear data are $i_\omega$ where $i \in \{1,\dots,6\}$, $\omega \in \{0,1\}^k$ and there is no edge labelled $i$ incident to vertex $\omega$.  In other words, there is a surviving linear form attached to each vertex $\omega$ (which has not been Cauchy--Schwarzed away) for each $i \in \{1,\dots,6\}$ which is not a vertex label at $\omega$.

  Denote this last datum by $\Psi_1 = \left(\cV, \big(W_i^{(1)}\big)_{i \in I_1}, \big(\psi_i^{(1)}\big)_{i \in I_1}\right)$.  Explicitly: $\cV$ is the subspace of $V'^{\{0,1\}^4}$ determined by the above graph of vector spaces; the index set is
  \[
    I_1 = \big\{ 1_\omega, 2_\omega \colon \omega \in \{0,1\}^4 \big\} \cup \big\{ 3_\omega, 4_\omega \colon \omega \in \{0,1\}^4 \setminus \{ 0100, 0101, 0110, 0111 \} \big\}\, ;
  \]
  the vector spaces $W^{(1)}_i$ for $i \in I_1$ are all just $\FF_p$; and $\psi^{(1)}_i \colon \cV \to W^{(1)}_i$ are given by
  \[
    \psi^{(1)}_{r_\omega}(v'_{0000},\dots,v'_{1111}) = \psi_r(v'_\omega) \, .
  \]
  Our next task is to prune back all of the $(5,6)$ squares apart from the bottom right one using $\MERGE$ and $\TRIVIAL$ steps.  We will need the following standard linear algebra fact.
  \begin{lemma}
    \label{lem:easy-prune}
    Let $L, L_1, L_2$ be vector spaces and $s_i \colon L \to L_i$ for $i=1,2$ be linear maps.  Then there exist maps $\fs_1,\fs_2 \colon L \to L$ such that
    \begin{itemize}
      \item $s_i \circ \fs_i = s_i$ for $i=1,2$;
      \item $\ker \fs_i = \ker s_i$ for $i=1,2$; and
      \item the maps $\fs_1$, $\fs_2$ commute.
    \end{itemize}
  \end{lemma}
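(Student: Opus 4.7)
The plan is to reinterpret the required conditions as a statement about commuting idempotents, and then produce them by a direct sum decomposition of $L$ chosen to be simultaneously diagonal for both projections.

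First, observe that the conditions $s_i \circ \mathfrak{s}_i = s_i$ and $\ker \mathfrak{s}_i = \ker s_i$ taken together say exactly that $\mathfrak{s}_i$ is a linear idempotent (a projection) whose kernel is precisely $\ker s_i$: indeed, $s_i(v - \mathfrak{s}_i(v)) = 0$ for all $v$ means $v - \mathfrak{s}_i(v) \in \ker s_i = \ker \mathfrak{s}_i$, so $\mathfrak{s}_i(\mathfrak{s}_i(v)) = \mathfrak{s}_i(v)$, and conversely any idempotent with kernel $\ker s_i$ satisfies both conditions. Therefore the problem is to construct two commuting projections on $L$ with prescribed kernels $\ker s_1$ and $\ker s_2$.

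Set $N = \ker s_1 \cap \ker s_2$. Choose subspaces $K_1, K_2 \subseteq L$ such that $\ker s_1 = N \oplus K_1$ and $\ker s_2 = N \oplus K_2$, and choose a complement $M$ of $\ker s_1 + \ker s_2$ in $L$. A short verification (using that $K_1 \cap K_2 \subseteq N$ and intersects $N$ trivially) shows the sum is direct, giving the decomposition
\[
  L = M \oplus K_1 \oplus K_2 \oplus N .
\]
Now define $\mathfrak{s}_1$ to be the projection onto $M \oplus K_2$ with kernel $K_1 \oplus N$, and $\mathfrak{s}_2$ to be the projection onto $M \oplus K_1$ with kernel $K_2 \oplus N$. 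By construction $\ker \mathfrak{s}_i = \ker s_i$, and both $\mathfrak{s}_i$ are diagonal with respect to the decomposition above (each acts as $0$ or the identity on each of the four summands), so they automatically commute.

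I do not expect any real obstacle here; the only nuisance is making sure the intersection/complement book-keeping is set up so that $M, K_1, K_2, N$ really are complementary in $L$, which is the short verification mentioned above. The construction does of course require choices of complementary subspaces, so $\mathfrak{s}_1$ and $\mathfrak{s}_2$ are not canonical, but this is irrelevant for the intended application.
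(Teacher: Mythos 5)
Your proof is correct and follows essentially the same route as the paper: both build the direct sum decomposition $L = N \oplus K_1 \oplus K_2 \oplus M$ (the paper's $K_{00},K_{01},K_{10},K_{11}$) and take $\fs_1,\fs_2$ to be the obvious coordinate projections, which are then automatically commuting. Your preliminary reformulation in terms of commuting idempotents with prescribed kernels is a nice clarification but not needed, since the paper simply verifies the three properties directly from the block definition.
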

  \begin{proof}
    Pick a basis for $\ker s_1 \cap \ker s_2$, and extend it separately to a basis for $\ker s_1$ and $\ker s_2$; merging these gives a basis for $\ker s_1 + \ker s_2$.  Finally extend this to a basis for $L$.  This gives a direct sum decomposition $L = K_{00} \oplus K_{01} \oplus K_{10} \oplus K_{11}$ where $\ker s_1 = K_{00} + K_{01}$ and $\ker s_2 = K_{00} + K_{10}$.  Let $\fs_1(x_{00},x_{01},x_{10},x_{11}) = (0,0,x_{10},x_{11})$ and $\fs_2(x_{00},x_{01},x_{10},x_{11}) = (0, x_{01}, 0, x_{11})$.  It is clear these maps have the desired properties.
  \end{proof}

  Consider e.g.~the bottom left square consisting of ${0010}$, ${0110}$, ${1010}$, ${1110}$.  We now merge the indices $i_{0010}$ for $1 \le i \le 4$ into a single index $R$, and all eight indices $i_{1010}$, $i_{1110}$ for $1 \le i \le 4$ into a single index $S$; that is, we apply
  \[
    \MERGE_{\{1_{0010},  2_{0010},   3_{0010},   4_{0010} \} \mapsto R,\ \{ 1_{1010},  2_{1010},   3_{1010},   4_{1010}, 1_{1110},  2_{1110},   3_{1110},   4_{1110}  \} \mapsto S}
  \]
to $\Psi_1$ to obtain the merged datum $\Psi_2 = \left(\cV, \big(W^{(2)}_i\big)_{i \in I_2}, \big(\psi^{(2)}_i\big)_{i \in I_2} \right)$. 

  Let $\cV'$ denote the vector space associated to the following graph of vector spaces on $V'$:
  \begin{center}
    \begin{tikzpicture}
      \node[circle,draw] (0000) at (0,0) {$0000$};
      \node[circle,draw] (1000) at (2,0) {$1000$};
      \node[circle,draw] (0100) at (0,2) {$0100$};
      \node[circle,draw] (1100) at (2,2) {$1100$};
      \draw (0000) -- (1000) node[midway,below] {$6$};
      \draw (0100) -- (1100) node[midway,above] {$6$};
      \draw (0000) -- (0100) node[midway,left] {$5$};
      \draw (1000) -- (1100) node[midway,right] {$5$};

      \node[circle,draw] (0110) at (-2,2) {$0110$};

      \draw (0100) -- (0110) node[midway,above] {$3$};

      \node[circle,draw] (0001) at (0,6) {$0001$};
      \node[circle,draw] (1001) at (2,6) {$1001$};
      \node[circle,draw] (0101) at (0,4) {$0101$};
      \node[circle,draw] (1101) at (2,4) {$1101$};
      \draw (0001) -- (1001) node[midway,above] {$6$};
      \draw (0101) -- (1101) node[midway,below] {$6$};
      \draw (0001) -- (0101) node[midway,left] {$5$};
      \draw (1001) -- (1101) node[midway,right] {$5$};

      \node[circle,draw] (0011) at (-2,6) {$0011$};
      \node[circle,draw] (1011) at (-4,6) {$1011$};
      \node[circle,draw] (0111) at (-2,4) {$0111$};
      \node[circle,draw] (1111) at (-4,4) {$1111$};
      \draw (0011) -- (1011) node[midway,above] {$6$};
      \draw (0111) -- (1111) node[midway,below] {$6$};
      \draw (0011) -- (0111) node[midway,right] {$5$};
      \draw (1011) -- (1111) node[midway,left] {$5$};

      \draw (0101) -- (0111) node[midway,above] {$3$};

      \draw (0100) -- (0101) node[midway,right] {$4$};
      \draw (0110) -- (0111) node[midway,left] {$4$};
    \end{tikzpicture}
  \end{center}
  and define a datum $\Psi_3 = \left(\cV', \big(W_i^{(3)}\big)_{i \in I_3}, \big(\psi^{(3)}_i\big)_{i \in I_3}\right)$ where:
  \begin{itemize}
    \item $I_3$ is the same as $I_2$;
    \item $W^{(3)}_i = W^{(2)}_i = \FF_p$ and $\psi^{(3)}_i = \psi^{(2)}_i = \psi^{(1)}_i$ for every $i \ne R,S$;
    \item $W_R = W_S = \FF_p^2$ and 
      \begin{align*}
        \psi^{(3)}_R\big((v'_\omega)_{\omega \in \{0,1\}^4 \setminus \{0010, 1010, 1110\} }\big) &= \psi_5(v'_{0110}) \\
        \psi^{(3)}_S\big((v'_\omega)_{\omega \in \{0,1\}^4 \setminus \{0010, 1010, 1110\} }\big) &= \psi_6(v'_{0110}) \, .
      \end{align*}
  \end{itemize}
  (It is not very important, but these are all surjective, as can be seen by considering the image of the diagonal embedding $V' \to \cV'$.)

  We claim that $\Psi_2$ is dominated trivially by the ``pruned'' datum $\Psi_3$.
  To justify this, we first apply Lemma \ref{lem:easy-prune} to $V'$, $\psi_5$ and $\psi_6$ to obtain maps $\fs_1, \fs_2 \colon V' \to V'$.  We can then define an injection $\imath \colon \cV' \to \cV$ by
  \[
    \imath\big((v'_\omega)_{\omega \in \{0,1\}^4 \setminus \{0010, 1010, 1110\} }\big) = \omega \mapsto \begin{cases} v'_\omega &\colon \omega \ne 0010, 1010, 1110 \\
      \fs_1(v'_{0110}) &\colon \omega = 0010 \\
      \fs_2(v'_{0110}) &\colon \omega = 1110 \\
    \fs_1 \fs_2(v'_{0110}) &\colon \omega = 1010 \end{cases} \, .
  \]
  For this to make sense, we need the compatibility conditions associated to the graph for $\cV$ to hold. In particular we need
  \begin{align*}
    \psi_5(v'_{0110}) &= \psi_5(\fs_1(v'_{0110})) \\
    \psi_5(\fs_2(v'_{0110})) &= \psi_5(\fs_1 \fs_2(v'_{0110})) \\
    \psi_6(v'_{0110}) &= \psi_6(\fs_2(v'_{0110})) \\
    \psi_6(\fs_1(v'_{0110})) &= \psi_6(\fs_1 \fs_2(v'_{0110})) 
  \end{align*}
  and indeed these follow from the properties of $\fs_1$ and $\fs_2$.  The remaining compatibility conditions are inherited from $\cV'$.

  We now give the $\TRIVIAL$ step explicitly.  For $i \ne R,S$ the map $\sigma_i \colon W^{(3)}_i \to W^{(2)}_i$ is just the identity.  For $R$ and $S$, consider that
  \begin{align*}
    \ker \left(\psi^{(2)}_R \circ \imath\right) &\supseteq \left\{ (v'_{0000},\dots,v'_{1111}) \in \cV' \colon \fs_1(v'_{0110}) = 0 \right\} \\
      &= \left\{ (v'_{0000},\dots,v'_{1111}) \in \cV' \colon \psi_5(v'_{0110}) = 0 \right\}  = \ker \psi^{(3)}_R \\
    \ker \left(\psi^{(2)}_S \circ \imath\right) &\supseteq \left\{ (v'_{0000},\dots,v'_{1111}) \in \cV' \colon \fs_2(v'_{0110}) = 0 \right\} \\
      &= \left\{ (v'_{0000},\dots,v'_{1111}) \in \cV' \colon \psi_6(v'_{0110}) = 0 \right\} = \ker \psi^{(3)}_S 
  \end{align*}
  as the original constituent forms of $\psi^{(2)}_R$ depend only on $v'_{0010}$ and those of $\psi^{(2)}_S$ on $(v'_{1010}, v'_{1110})$ in $\cV$.
  It follows that there exist unique linear maps $\sigma_R \colon W_R^{(3)} \to W_R^{(2)}$ and $\sigma_S \colon W_S^{(3)} \to W_S^{(2)}$ such that $\psi^{(2)}_R \circ \imath = \sigma_R \circ \psi_R^{(3)}$ and $\psi^{(2)}_S \circ \imath = \sigma_S \circ \psi_S^{(3)}$, respectively.  Hence the conditions of Proposition \ref{prop:trivial-dominate} are satisfied and $\Psi_3$ dominates $\Psi_2$ trivially.

  It is natural to relabel $R$ as $5_{0110}$ and $S$ as $6_{0110}$ in $\Psi_3$, as these indices now behave exactly like copies of $\psi_5$ and $\psi_6$ respectively associated to the vertex $0110$.

  We can summarize the preceding argument, which took us from the $16$-vertex graph to the $13$-vertex graph above, informally as follows.  On the dual side, we can say that we built a projection map $\imath^\ast \colon \cV^\ast \to \cV'^\ast$ that maps each of $\big(\ker \psi^{(2)}_{i_{0010}}\big)^\perp$ into $\big(\ker \psi^{(3)}_R\big)^\perp$ and each of $\big(\ker \psi^{(2)}_{i_{1010}}\big)^\perp$ or $\big(\ker \psi^{(2)}_{i_{1110}}\big)^\perp$ into $\big(\ker \psi^{(3)}_S\big)^\perp$ (for $i \in \{1,\dots,4\}$).  Moreover, $\imath^\ast$ was constructed by projecting out the spare coordinates at the vertices $0010$, $1010$, $11110$ using $\fs_1^\ast$ along vertical edges and $\fs_2^\ast$ along horizontal edges, and we checked this made sense.  This is what is meant by the following further annotated diagram (we will see more of this kind below).

  \begin{center}
    \begin{tikzpicture}
      \node[circle,draw] (0000) at (0,0) {$0000$};
      \node[circle,draw] (1000) at (2,0) {$1000$};
      \node[circle,draw] (0100) at (0,2) {$0100$};
      \node[circle,draw] (1100) at (2,2) {$1100$};
      \draw (0000) -- (1000) node[midway,below] {$6$};
      \draw (0100) -- (1100) node[midway,above] {$6$};
      \draw (0000) -- (0100) node[midway,left] {$5$};
      \draw (1000) -- (1100) node[midway,right] {$5$};

      \node[circle,draw] (0010) at (-2,0) {$0010$};
      \node[circle,draw] (1010) at (-4,0) {$1010$};
      \node[circle,draw] (0110) at (-2,2) {$0110$};
      \node[circle,draw] (1110) at (-4,2) {$1110$};
      \draw[-latex] (1010) -- (0010) node[midway,below] {$6$} node[midway,above] {$\fs_2^\ast$};
      \draw[-latex] (1110) -- (0110) node[midway,above] {$6$} node[midway,below] {$\fs_2^\ast$};
      \draw[-latex] (0010) -- (0110) node[midway,right] {$5$} node[midway,left] {$\fs_1^\ast$};
      \draw[-latex] (1010) -- (1110) node[midway,left] {$5$} node[midway,right] {$\fs_1^\ast$};
      \node[below] at ($(0010) + (0, -0.5)$) {$1234 \sslash \emptyset$};
      \node[left] at ($(1110) + (-0.5, 0)$) {$\emptyset \sslash 1234 $};
      \node[below] at ($(1010) + (0, -0.5)$) {$\emptyset \sslash 1234 $};

      \draw (0100) -- (0110) node[midway,above] {$3$};

      \node[circle,draw] (0001) at (0,6) {$0001$};
      \node[circle,draw] (1001) at (2,6) {$1001$};
      \node[circle,draw] (0101) at (0,4) {$0101$};
      \node[circle,draw] (1101) at (2,4) {$1101$};
      \draw (0001) -- (1001) node[midway,above] {$6$};
      \draw (0101) -- (1101) node[midway,below] {$6$};
      \draw (0001) -- (0101) node[midway,left] {$5$};
      \draw (1001) -- (1101) node[midway,right] {$5$};

      \node[circle,draw] (0011) at (-2,6) {$0011$};
      \node[circle,draw] (1011) at (-4,6) {$1011$};
      \node[circle,draw] (0111) at (-2,4) {$0111$};
      \node[circle,draw] (1111) at (-4,4) {$1111$};
      \draw (0011) -- (1011) node[midway,above] {$6$};
      \draw (0111) -- (1111) node[midway,below] {$6$};
      \draw (0011) -- (0111) node[midway,right] {$5$};
      \draw (1011) -- (1111) node[midway,left] {$5$};

      \draw (0101) -- (0111) node[midway,above] {$3$};

      \draw (0100) -- (0101) node[midway,right] {$4$};
      \draw (0110) -- (0111) node[midway,left] {$4$};
    \end{tikzpicture}
  \end{center}

  We then repeat these same steps on the top left and top right corners.  The result is a datum $\Psi_4 = \left(\cV'', \big(\psi^{(4)}_i\big)_{i \in I_4}, \big(W_i^{(4)}\big)_{i \in I_4} \right)$ corresponding to the $7$-vertex configuration
  \begin{center}
    \begin{tikzpicture}
      \node[circle,draw] (0000) at (0,0) {$0000$};
      \node[circle,draw] (1000) at (2,0) {$1000$};
      \node[circle,draw] (0100) at (0,2) {$0100$};
      \node[circle,draw] (1100) at (2,2) {$1100$};
      \draw (0000) -- (1000) node[midway,below] {$6$};
      \draw (0100) -- (1100) node[midway,above] {$6$};
      \draw (0000) -- (0100) node[midway,left] {$5$};
      \draw (1000) -- (1100) node[midway,right] {$5$};

      \node[circle,draw] (0110) at (-2,2) {$0110$};

      \draw (0100) -- (0110) node[midway,above] {$3$};

      \node[circle,draw] (0101) at (0,4) {$0101$};

      \node[circle,draw] (0111) at (-2,4) {$0111$};

      \draw (0101) -- (0111) node[midway,above] {$3$};

      \draw (0100) -- (0101) node[midway,right] {$4$};
      \draw (0110) -- (0111) node[midway,left] {$4$};
    \end{tikzpicture}
  \end{center}
  and which dominates $\Psi_3$, and hence $\Psi$, respecting $(r_{0000}, r)$ for $1 \le r \le 4$, with exponent $1$.  Explicitly:
  \begin{itemize}
    \item the index set of $\Psi_4$ is
      \begin{align*}
        I_4 &= \big\{ 1_\omega, 2_\omega \colon \omega \in \{0000, 1000, 0100, 1100, 0101, 0110, 0111\} \big\} \\
        &\cup \big\{ 3_\omega, 4_\omega \colon \omega \in \{0000, 1000, 1100\} \big\} \cup \big\{ 5_\omega, 6_\omega \colon \omega \in \{0101, 0110, 0111\} \big\} \, ;
      \end{align*}
    \item the space $\cV''$ is that associated to the above graph of vector spaces, and so is a subspace of ${V'}^{\cA}$ where $\cA = \{0000,1000,0100,1100,0101,0110,0111\}$;
    \item we have $W^{(4)}_{r_\omega} = \FF_p$ when $1 \le r \le 4$ or $\FF_p^2$ when $r = 5,6$; and
    \item $\psi^{(4)}_{r_\omega}((v'_\eta)_{\eta \in \cA}) = \psi_r(v'_\omega)$ for each $r_\omega \in I_4$.
  \end{itemize}
  This completes the first phase of the argument.

  We now perform our remaining $\MERGE$ operation.  This partitions all remaining forms apart from those in the $0000$ copy into two classes $A$ and $B$, by
  \begin{align*}
    \{ 1_{1000}, 2_{1000}, 1_{1100}, 2_{1100}, 5_{0110}, 6_{0110}, 5_{0111}, 6_{0111}, 5_{0101}, 1_{0101}, 2_{0100} \} &\mapsto A \\
    \{ 3_{1000}, 4_{1000}, 3_{1100}, 4_{1100}, 1_{0110}, 2_{0110}, 1_{0111}, 2_{0111}, 6_{0101}, 2_{0101}, 1_{0100} \} &\mapsto B
  \end{align*}
  and we call the merged datum $\Psi_5$.  This corresponds to the annotated diagram discussed above:
  \begin{center}
    \begin{tikzpicture}
      \node[circle,draw] (0000) at (0,0) {$0000$};
      \node[circle,draw] (1000) at (2,0) {$1000$};
      \node[right] at ($(1000) + (0.5, 0)$) {$12 \sslash 34$};
      \node[circle,draw] (0100) at (0,2) {$0100$};
      \node[above right] at ($(0100) + (0.3, 0.3)$) {$2 \sslash 1$};
      \node[circle,draw] (1100) at (2,2) {$1100$};
      \node[right] at ($(1100) + (0.5, 0)$) {$12 \sslash 34$};
      \draw (0000) -- (1000) node[midway,below] {$6$};
      \draw (0100) -- (1100) node[midway,below] {$6$};
      \draw (0000) -- (0100) node[midway,left] {$5$};
      \draw (1000) -- (1100) node[midway,right] {$5$};

      \node[circle,draw] (0110) at (-2,2) {$0110$};
      \node[left] at ($(0110) + (-0.5, 0)$) {$56 \sslash 12$};

      \draw (0100) -- (0110) node[midway,above] {$3$};

      \node[circle,draw] (0101) at (0,4) {$0101$};
      \node[above] at ($(0101) + (0.0, 0.5)$) {$15 \sslash 26$};

      \node[circle,draw] (0111) at (-2,4) {$0111$};
      \node[left] at ($(0111) + (-0.5, 0)$) {$56 \sslash 12$};

      \draw (0101) -- (0111) node[midway,above] {$3$};

      \draw (0100) -- (0101) node[midway,left] {$4$};
      \draw (0110) -- (0111) node[midway,left] {$4$};
    \end{tikzpicture}
  \end{center}
  and we note the index set is now $\{r_{0000} \colon 1 \le r \le 4\} \cup \{A, B\}$.

  Finally, we wish to dominate $\Psi_5$ trivially by an augmented datum $\Psi'$.  Recall that $\Psi'$ is as follows:
  \begin{itemize}
    \item its base space is $V'$;
    \item the index set is $\{1,\dots,6\}$;
    \item the spaces $W'_i$ are given by $W'_i = \FF_p$ for $1 \le i \le 4$ and $W'_5, W'_6 = \FF_p^2$; and
    \item $\psi'_i = \psi_i$ for $1 \le i \le 4$, and $\psi'_i(v,t) = (\phi'_i(v), t + \chi'_i(v))$ for $i = 5,6$, where $\phi'_5, \phi'_6$ are the given forms satisfy $[\phi'_5] = X'_5$, $[\phi'_6] = X'_6$, and $\chi'_5, \chi'_6 \in \ell$ are forms we may choose.
  \end{itemize}
  In what follows we identify indices $r_{0000}$ and $r$ for $1 \le r \le 4$, $A$ and $5$, and $B$ and $6$.  Our remaining task is therefore to construct linear maps $\CMjmath \colon V' \to \cV''$, $\nu_5 \colon W'_5 \to W^{(5)}_5$ and $\nu_6 \colon W'_6 \to W^{(5)}_6$ which, together with the identity maps $W'_i \to W^{(5)}_i$ for $1 \le i \le 4$, satisfy the conditions of Proposition \ref{prop:trivial-dominate}.

  We fix some notation.  Again write $X_1, \dots, X_6$ for the points $[\phi_1],\dots,[\phi_6]$ in $\PP(V^\ast)$.  Let $Y$, $Z$, $X'_5$, $X'_6$ be defined as above (see Figure \ref{fig:main-construction}); that is, $Y$ is the intersection of the lines $X_1 X_5$, $Z$ is the intersection of the lines $X_2 X_6$ and $X_3 X_4$, $X'_5$ is $X_2 Y \cap \ell$, and $X'_6$ is $X_1 Z \cap \ell$.  Write $H_i = \ker(\psi_i)^\perp \subseteq V'^\ast$ for each $1 \le i \le 6$.
  
  Also recall $V' = V \oplus \FF_p$; so $V$ is naturally a subspace $\{(v,0) \colon v \in V\}$ of $V'$, and we write $T = \spn((0,1))$ for the other summand, so that $V' = V \oplus T$.  Dually, we may make an identification $V'^\ast = V^\ast \oplus T^\ast$, and thereby identify $V^\ast$ and $T^\ast$ with subspaces of $V'^\ast$.  Let $\xi \in {V'}^\ast$ be the linear form $\xi(v,t) = t$, meaning that $\spn(\xi) = T^\ast$.

  We make a simplifying observation.  If $Y = Z$, then $X'_6 = X_5$ and $X'_5 = X_6$, so the effect of the whole block move was just to swap $X_5$ and $X_6$.  In this case, the result is trivially satisfied by exchanging the indices $5$ and $6$ (and ignoring everything we've done up to this point).  Hence we can assume $Y \ne Z$ in what follows.

  We isolate a linear algebraic lemma which states concretely what is needed for this $\TRIVIAL$ step.
  \begin{lemma}
    \label{lem:horrible-la}
    There exist subspaces $H'_5$, $H'_6$ of $\ell + T^\ast$ \uppar{which is itself a subspace of $V'^\ast$}, and linear maps $\tau_1,\tau_2,\tau_3,\tau_4 \colon V' \to V'$, with the following properties:
    \begin{enumerate}[label=(\roman*)]
      \item composition with $\tau_1$ fixes $\psi_3$ and $\psi_4$ \uppar{i.e., $\psi_3 \circ \tau_1 = \psi_3$ and $\psi_4 \circ \tau_1 = \psi_4$};
      \item composition with $\tau_2$ fixes $\psi_5$ and $\psi_6$ \uppar{i.e., $\psi_5 \circ \tau_2 = \psi_5$ and $\psi_6 \circ \tau_2 = \psi_6$};
      \item similarly, $\psi_3 \circ \tau_3 = \psi_3$ and $\psi_6 \circ \tau_4 = \psi_6$;
      \item $\tau_2^\ast \tau_1^\ast (H_1 + H_5) \subseteq H'_5$ and $\tau_2^\ast \tau_1^\ast(H_2 + H_6) \subseteq H'_6$;
      \item $\tau_2^\ast(H_2) \subseteq H'_5$ and $\tau_2^\ast (H_1) \subseteq H'_6$;
      \item $\tau_2^\ast \tau_1^\ast \tau_3^\ast(H_5 + H_6) \subseteq H'_5$ and $\tau_2^\ast \tau_1^\ast \tau_3^\ast(H_1 + H_2) \subseteq H'_6$;
      \item $\tau_2^\ast \tau_4^\ast(H_1 + H_2) \subseteq H'_5$ and $\tau_2^\ast \tau_4^\ast(H_3+H_4) \subseteq H'_6$;
      \item $H'_5$ and $H'_6$ have dimension at most $2$, and $H'_5 \cap V^\ast$ and $H'_6 \cap V^\ast$ are contained in the $1$-dimensional subspaces corresponding to $X'_5$, $X'_6$ respectively.
    \end{enumerate}
  \end{lemma}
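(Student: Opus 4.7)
The plan is to construct $\tau_1,\tau_2,\tau_3,\tau_4$ as explicit ``shear-type'' endomorphisms of $V'$, each of the form $\tau_k = \id + \delta_k$ where $\delta_k\colon V'\to V'$ has image in the subspace dictated by (i)--(iii), namely $\image(\delta_1)\subseteq \ker\psi_3\cap\ker\psi_4$, $\image(\delta_2)\subseteq \ker\psi_5\cap\ker\psi_6$, $\image(\delta_3)\subseteq\ker\psi_3$, and $\image(\delta_4)\subseteq\ker\psi_6$. A short calculation using the augmentation (in particular, that $\chi_5-\chi_6$ is not a multiple of $\phi_5$ or $\phi_6$ when $[\phi_5]=[\phi_6]$) shows that $\ker\psi_5\cap\ker\psi_6$ is one-dimensional; so $\tau_2$ is forced to be a rank-one shear $v\mapsto v+\alpha_2(v)u_2$ with $u_2$ a fixed generator. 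For the shift subspaces I would set $H'_5 = \spn(\phi'_5,\xi+\chi'_5)$ and $H'_6 = \spn(\phi'_6,\xi+\chi'_6)$ for some $\chi'_5,\chi'_6\in\ell$ to be chosen; both are visibly contained in $\ell+T^*$, have dimension at most $2$, and meet $V^*$ in $\spn(\phi'_5)$ resp.\ $\spn(\phi'_6)$, so (viii) is automatic. The simplifying observation made just before the lemma, that $Y=Z$ makes $B_{1\to 2}$ merely a swap, lets us also assume $Y\ne Z$.

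The main verification is (iv)--(vii). On the dual side, $\tau_k^*(\phi) = \phi + \phi\circ\delta_k$, and the idea is to choose each $\delta_k$ so that its dual implements a specific projective perspectivity appearing in the definition of $B_{1\to 2}$: $\delta_2$ so that $\tau_2^*$ realises the perspectivity onto $\ell$ from the point $[\ker\phi_5\cap\ker\phi_6]\in\PP(V)$; $\delta_1$ so that $\tau_1^*$ realises projection of $\ell$ onto the line $X_3X_4$ from $X_1$; and $\delta_3,\delta_4$ implementing the symmetric variants in (vi), (vii). The $V^*$-components of the resulting shift forms are determined modulo $\ell$ by conditions (v) and (iv) together with consistency, using the identity $v_0^\perp\cap V^* = \ell$ for any generator $v_0$ of $\ker\phi_5\cap\ker\phi_6$. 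Reading off the residual $T^*$-terms in $\tau_2^*\tau_1^*(\phi_1+\phi_5)$ and $\tau_2^*\tau_1^*(\phi_2+\phi_6)$ then defines $\chi'_5$ and $\chi'_6$, and analogous computations for $\tau_3,\tau_4$ must give the same values.

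The main obstacle is the compatibility problem: a single two-dimensional $H'_5$ (and likewise $H'_6$) must simultaneously absorb all four distinct preimages appearing in (iv)--(vii). Intrinsically, this works because each inclusion corresponds to one of several alternate perspectivity descriptions of $X'_5$ or $X'_6$, and all these descriptions give the same answer by the M\"obius commutation identities of Lemma~\ref{lem:safe-block}. Concretely, I would fix coordinates as in the proof of Lemma~\ref{lem:euclid} (taking $X_1=[0:0:1]$, $X_2=[1:0:1]$, $X_3=[0:-1:1]$, $\ell=\{z=0\}$, and $X_4=[a:b:1]$ with $a$, $b$, $a-b-1$ all nonzero) and verify each of the eight inclusions by direct matrix computation. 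The augmentation coordinate $\xi$, together with the freedom in the choice of $\chi'_5,\chi'_6$, is essential: it absorbs the residual $T^*$-components that cannot be cancelled within $V^*$ alone, which is precisely the mechanism that lets the degenerate case $[\phi_5]=[\phi_6]$ be handled uniformly by the same construction.
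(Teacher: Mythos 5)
Your high-level picture is right in several respects: constructing each $\tau_k$ as $\id+\delta_k$ with $\im\delta_k$ in the appropriate kernel (and hence $\tau_k^\ast$ fixing the relevant $H_i$ pointwise), taking $H'_5=\spn(\phi'_5,\xi+\chi'_5)$ and $H'_6=\spn(\phi'_6,\xi+\chi'_6)$, reducing to $Y\ne Z$, and using the augmentation coordinate to absorb the residual $T^\ast$-components all match what the paper actually does. But as written this is an announced plan, not a proof: the entire content of the lemma is conditions (iv)--(vii), and your proposal defers them to a ``direct matrix computation'' in coordinates that is never carried out. Moreover the proposed mechanism for why one pair $(H'_5,H'_6)$ can simultaneously absorb all four images --- ``all these descriptions give the same answer by the M\"obius commutation identities of Lemma~\ref{lem:safe-block}'' --- is not how the argument goes and is too vague to be checkable. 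Lemma~\ref{lem:safe-block} proves the single identity $\tau\sigma_{1\to2}=\sigma_{2\to1}\tau$ on $\ell$, which governs the relation between $X'_5$ and $X'_6$, not the coincidence of four separate preimages of $H'_5$. In the actual proof the compatibility is arranged by \emph{construction}: $\tau_1$ is chosen to send $H_1+H_5\mapsto H_Y$ and $H_2+H_6\mapsto H_Z$ while fixing $H_3+H_4$; $\tau_2$ is a projection onto $\ell+T^\ast$; $H'_5,H'_6$ are then \emph{defined} as $\tau_2^\ast(H_Y+H_2)$ and $\tau_2^\ast(H_Z+H_1)$; and $\tau_3,\tau_4$ are built (via a separate small lemma about moving one $(w,U_1,U_2)$ configuration to another) precisely so that $\tau_3^\ast(H_5+H_6)\subseteq H_1+H_5$, $\tau_3^\ast(H_1+H_2)\subseteq H_2+H_6$, $\tau_4^\ast(H_1+H_2)\subseteq H_Y+H_2$, $\tau_4^\ast(H_3+H_4)\subseteq H_Z+H_1$. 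Then (vi) and (vii) follow formally by composing with the already-established (iv) and (v). None of this chaining or the construction of $\tau_3,\tau_4$ appears in your sketch.

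Two smaller inaccuracies worth flagging. First, $\ker\psi_5\cap\ker\psi_6$ need \emph{not} be one-dimensional: the definition of an augmented datum imposes no relation between $\chi_5$ and $\chi_6$, so in the fully degenerate case $\phi_5=\phi_6$, $\chi_5=\chi_6$ one has $\ker\psi_5=\ker\psi_6$ of dimension $2$. What is true --- and what the paper uses --- is that $(\ell+T^\ast)^\perp$ is always one-dimensional and always contained in $\ker\psi_5\cap\ker\psi_6$, and the rank-one correction for $\tau_2$ is supported there. Second, the description of $\tau_2^\ast$ as a ``perspectivity onto $\ell$ from the point $[\ker\phi_5\cap\ker\phi_6]\in\PP(V)$'' confuses $\PP(V)$ with $\PP(V^\ast)$ and $\ell$ with $\ell+T^\ast$: in the paper $\tau_2^\ast$ is the projection of $V'^\ast$ onto $\ell+T^\ast$ with one-dimensional kernel $\spn(\phi_1-\xi)$ (equivalently, center $[\phi_1-\xi]$ in $\PP(V'^\ast)$). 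These are fixable, but together with the unexecuted verification they mean the proposal does not yet constitute a proof.
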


  Indeed, suppose this lemma holds.  We may define
  \begin{align*}
    \CMjmath \colon V' &\to \cV'' \\ 
    v' &\mapsto \begin{cases}
      0000 &\mapsto v' \\
      0100 &\mapsto \tau_2(v') \\
      1000 &\mapsto \tau_4 \tau_2(v') \\
      1100 &\mapsto \tau_4 \tau_2(v') \\
      0101 &\mapsto \tau_1 \tau_2(v') \\
      0110 &\mapsto \tau_3 \tau_1 \tau_2(v') \\
      0111 &\mapsto \tau_3 \tau_1 \tau_2(v')
    \end{cases}
  \end{align*}
  which is perhaps best summarized by further annotating the above diagram as follows:
  \begin{center}
    \begin{tikzpicture}
      \node[circle,draw] (0000) at (0,0) {$0000$};
      \node[circle,draw] (1000) at (2.3,0) {$1000$};
      \node[right] at ($(1000) + (0.5, 0)$) {$12 \sslash 34$};
      \node[circle,draw] (0100) at (0,2) {$0100$};
      \node[above right] at ($(0100) + (0.3, 0.3)$) {$2 \sslash 1$};
      \node[circle,draw] (1100) at (2.3,2) {$1100$};
      \node[right] at ($(1100) + (0.5, 0)$) {$12 \sslash 34$};
      \draw[latex-] (0000) -- (1000) node[midway,below] {$\tau_2^\ast \circ \tau_4^\ast$} node[midway,above] {$6$};
      \draw[latex-] (0100) -- (1100) node[midway,above] {$\tau_4^\ast$} node[midway,below] {$6$};
      \draw[latex-] (0000) -- (0100) node[midway,left] {$\tau_2^\ast$} node[midway,right] {$5$};
      \draw[latex-] (1000) -- (1100) node[midway,right] {$\id$} node[midway,left] {$5$};

      \node[circle,draw] (0110) at (-2.3,2) {$0110$};
      \node[left] at ($(0110) + (-0.5, 0)$) {$56 \sslash 12$};

      \draw[latex-] (0100) -- (0110) node[midway,below] {$\tau_1^\ast \circ \tau_3^\ast$} node[midway,above] {$3$};

      \node[circle,draw] (0101) at (0,4) {$0101$};
      \node[above] at ($(0101) + (0.0, 0.5)$) {$15 \sslash 26$};

      \node[circle,draw] (0111) at (-2.3,4) {$0111$};
      \node[left] at ($(0111) + (-0.5, 0)$) {$56 \sslash 12$};

      \draw[latex-] (0101) -- (0111) node[midway,above] {$\tau_3^\ast$} node[midway,below] {$3$};

      \draw[latex-] (0100) -- (0101) node[midway,right] {$\tau_1^\ast$} node[midway,left] {$4$};
      \draw[latex-] (0110) -- (0111) node[midway,left] {$\id$} node[midway,right] {$4$};
    \end{tikzpicture}
  \end{center}
  Statements (i)--(iii) ensure that $\CMjmath$ makes sense, i.e.~that all the compatibility conditions in the definition of $\cV''$ are satisfied.  By the fact that $H'_5, H'_6 \subseteq \ell + T^\ast$ and statement (viii), for any $\phi'_5, \phi'_6$ with $[\phi'_5] = X'_5$, $[\phi'_6] = X'_6$ we can find $\chi'_5, \chi'_6 \in \ell$ such that $\spn(\phi'_5, \xi + \chi'_5) \supseteq H'_5$ and $\spn(\phi'_6, \xi + \chi'_6) \supseteq H'_6$; we use these $\chi'_5$, $\chi'_6$ to complete the definition of $\psi'_5$, $\psi'_6$ and thereby $\Psi'$.  Then, statements (iv)--(vii) are precisely what we need to deduce that
  \begin{align*}
    \ker\left( \psi^{(5)}_A \circ \CMjmath \right) &\supseteq {H'_5}^\perp \supseteq \ker \psi'_5 \\
    \ker\left( \psi^{(5)}_B \circ \CMjmath \right) &\supseteq {H'_6}^\perp \supseteq \ker \psi'_6
  \end{align*}
  and as before this guarantees that there exist unique maps $\nu_5 \colon W'_5 \to W^{(5)}_A$ and $\nu_6 \colon W'_6 \to W^{(5)}_B$ such that $\psi^{(5)}_A \circ \CMjmath = \nu_5 \circ \psi'_5$ and $\psi^{(5)}_B \circ \CMjmath = \nu_6 \circ \psi'_6$ respectively.

  The proof of this lemma is unpleasant and technical linear algebra, and will occupy the rest of the section.
  
  \begin{proof}[Proof of Lemma \ref{lem:horrible-la}]
    Note that $H_1,\dots,H_4$ are $1$-dimensional subspaces of $V^\ast$ corresponding to $X_1,\dots,X_4$, i.e.~$H_i = \spn(\phi_i)$.  Also, $H_5, H_6$ are subspaces of $\ell + T^\ast$ of dimension $2$ such that $H_5 \cap V^\ast$ and $H_6 \cap V^\ast$ are the $1$-dimension subspaces $\spn(\phi_5)$, $\spn(\phi_6)$ corresponding to $X_5, X_6$ respectively.

    We first construct the map $\tau_1 \colon V' \to V'$. Roughly speaking, $\tau_1^\ast$ is a projection from $\PP(V'^\ast)$ to the line $X_3 X_4$, which collapses the line $X_1 X_5$ onto $Y$ and the line $X_2 X_6$ onto $Z$.  Specifically, we want the following.
    
    \begin{claim*}
      We can find $\tau_1 \colon V' \to V'$ satisfying the following conditions: $\tau_1^\ast|_{H_3+H_4}$ is the identity; and writing $H_Y = \tau_1^\ast(H_1 + H_5)$ and $H_Z = \tau_1^\ast(H_2 + H_6)$, then $H_Y$ is just the $1$-dimensional subspace corresponding to $Y$ and $H_Z$ the $1$-dimensional subspace corresponding to $Z$.
    \end{claim*}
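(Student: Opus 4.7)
The plan is to construct $\tau_1$ by defining its adjoint $\tau_1^\ast \colon V'^\ast \to V'^\ast$ to be the projection onto $M := H_3 + H_4 = \spn(\phi_3,\phi_4)$ along a carefully chosen complementary subspace $N$, and then to recover $\tau_1$ via the canonical identification $V' \cong {V'}^{\ast\ast}$. The correct choice, as will emerge, is $N := U_Y \cap U_Z$, where $U_Y := H_1 + H_5 = \spn(\phi_1) + H_5$ and $U_Z := H_2 + H_6 = \spn(\phi_2) + H_6$; both are $3$-dimensional subspaces of $V'^\ast$, using that $H_5 \cap V^\ast = \spn(\phi_5)$ and hence $\phi_1 \notin H_5$ (since $[\phi_1] \ne [\phi_5]$), and symmetrically for $U_Z$.

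The verification will break into two main linear-algebra checks. First I will show $\dim N = 2$, which via the dimension formula reduces to checking $U_Y + U_Z = V'^\ast$. For this, observe that $U_Y + U_Z$ contains $\phi_1, \phi_2$ and the combination $(\xi + \chi_5) - (\xi + \chi_6) = \chi_5 - \chi_6 \in \ell = \spn(\phi_5, \phi_6)$, which together with $\xi + \chi_5 \in U_Y$ and $\phi_5, \phi_6 \in U_Y + U_Z$ recovers $\xi$. The general-position hypothesis on $X_1, X_2, X_5, X_6$ gives $\spn(\phi_1,\phi_2,\phi_5,\phi_6) = V^\ast$, so $U_Y + U_Z \supseteq V^\ast + \spn(\xi) = V'^\ast$.

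Next I will verify $M \cap N = 0$. A direct computation shows that $U_Y \cap M$ consists of vectors of the form $a\phi_1 + b\phi_5$ that also lie in $\spn(\phi_3,\phi_4)$; by the assumption that no triple $X_i, X_j, X_5$ with $1 \le i < j \le 4$ is collinear, the triples $\phi_1, \phi_3, \phi_4$ and $\phi_5, \phi_3, \phi_4$ are each independent in $V^\ast$, so $U_Y \cap M$ is exactly the $1$-dimensional line $\spn(\eta_Y) \subseteq M$ corresponding to the intersection point $Y$. Symmetrically, $U_Z \cap M = \spn(\eta_Z)$. Since the paragraph preceding the lemma has already reduced to the case $Y \ne Z$, these are distinct lines in the $2$-dimensional $M$, so $N \cap M \subseteq (U_Y \cap M) \cap (U_Z \cap M) = 0$.

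With $V'^\ast = M \oplus N$ established, the remaining properties of $\tau_1^\ast$ are formal: the restriction $\tau_1^\ast|_{H_3+H_4}$ is the identity by construction, and since $N \subseteq U_Y$ has codimension $1$ in $U_Y$, the image $\tau_1^\ast(U_Y) = H_Y$ is at most $1$-dimensional, while containing the nonzero element $\tau_1^\ast(\eta_Y) = \eta_Y$, forcing $H_Y = \spn(\eta_Y)$ as desired; the case of $H_Z$ is symmetric. The main obstacle I anticipate is the bookkeeping around the $\xi$-direction needed to verify $U_Y + U_Z = V'^\ast$, which is the only place the augmentation (as opposed to working on $V$ directly) interacts nontrivially with the geometry; everything else reduces either to elementary incidence geometry for points in general position, or to the already-assumed reduction $Y \ne Z$.
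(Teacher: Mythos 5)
Your proposal is correct, and the map you construct is identical to the paper's: the paper picks a basis $x,y$ for $W := (H_1 + H_5)\cap(H_2 + H_6)$ (your $N$), extends it by $\phi_3,\phi_4$ to a basis of $V'^\ast$, and sends $x,y\mapsto 0$ and $\phi_3,\phi_4$ to themselves --- which is exactly the projection onto $M=H_3+H_4$ along $N$. The small presentational difference is in how you justify $V'^\ast = M\oplus N$: the paper introduces the auxiliary point $D = X_1X_5\cap X_2X_6$, observes $W\cap V^\ast$ is the line of $D$, and deduces $D\notin X_3X_4$ from $Y\ne Z$; you instead note directly that $U_Y\cap M$ and $U_Z\cap M$ are the distinct lines corresponding to $Y$ and $Z$, so $N\cap M=0$. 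Both routes hinge on $Y\ne Z$ and the collinearity hypotheses in the same way, and the ``collapse'' computation of $H_Y,H_Z$ at the end is the same dimension count, so this is the same proof in lightly different linear-algebra dress.
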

    Note that the fact $\tau_1^\ast|_{H_3 + H_4} = \id$ implies (i) from the statement.
    \begin{proof}[Proof of Claim]
      Let $D$ be the intersection point of the lines $X_1 X_5$ and $X_2 X_6$ (which exists as, say, $X_1 X_2 X_5$ are not collinear).  Since we are assuming $Y \ne Z$, it follows that $D$ does not lie on the line $X_3 X_4$.

      Note $\dim (H_1 + H_5) = 3$ and $\dim (H_2 + H_6) = 3$, and that $(H_1 + H_5) \cap V^\ast$ and $(H_2 + H_6) \cap V^\ast$ are $2$-dimensional subspaces correspond to the lines $X_1 X_5$ and $X_2 X_6$ repsectively.  Writing $W = (H_1 + H_5) \cap (H_2 + H_6)$, we have $\dim W = 2$ (as $H_1+H_5 \ne H_2 + H_6$), and furthermore the intersection $W \cap V^\ast$ is precisely the $1$-dimensional subspace corresponding to $D = X_1 X_5 \cap X_2 X_6$.

      Hence, we may pick some non-zero $y \in W \cap V^\ast$, and extend this to a basis $x,y$ for $W$; so $[y] = D$ and necessarily $x \notin V^\ast$.

      It follows that $\phi_3, \phi_4, x, y$ is a basis for $V'^\ast$, and so we may define $\tau_1$ by:
      \begin{align*}
        \tau_1^\ast(\phi_3) &= \phi_3 \\
        \tau_1^\ast(\phi_4) &= \phi_4 \\
        \tau_1^\ast(x) &= 0 \\
        \tau_1^\ast(y) &= 0
      \end{align*}
      which immediately implies that $\tau_1^\ast|_{H_3+H_4}$ is the identity.

      Since $\dim (H_1 + H_5) = 3$ and $x,y \in H_1 + H_5$ are linearly independent vectors mapped to $0$ by $\tau_1^\ast$, it follows that $H_Y = \tau_1^\ast(H_1 + H_5)$ has dimension at most $1$.  Moreover, any vector in the $1$-dimension subspace corresponding to $Y$ is in $H_1 + H_5$ and also in $H_3 + H_4$, so is fixed by $\tau_1^\ast$.  It follows that $H_Y$ contains the $1$-dimensional subspace corrosponding to $Y$; so in fact $H_Y$ is exactly this subspace. 
      
      A parallel argument shows $H_Z$ is exactly the $1$-dimensional subspace $Z$.
    \end{proof}

    The construction of $\tau_2$ is similar, only in reverse, i.e.~projecting back onto the subspace $\ell + T^\ast$.
    \begin{claim*}
      We can find a map $\tau_2 \colon V' \to V'$ such that the following hold.  Define $H'_5 = \tau_2^\ast(H_Y + H_2)$ and $H'_6 = \tau_2^\ast(H_Z +H_1)$.  Then
        $H'_5, H'_6$ are subspaces of $\ell + T^\ast$ of dimension at most $2$; and
        $H'_5 \cap V^\ast$, $H'_6 \cap V^\ast$ are contained in the $1$-dimension subspace corresponding to $X'_5$, $X'_6$ respectively.
    \end{claim*}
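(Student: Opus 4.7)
My plan is to define $\tau_2^\ast$ as a linear map on $V'^\ast$ that is the identity on the $3$-dimensional subspace $\ell + T^\ast$ (which automatically contains both $H_5$ and $H_6$, thereby giving the required $\psi_5 \circ \tau_2 = \psi_5$ and $\psi_6 \circ \tau_2 = \psi_6$) and that sends a carefully chosen complementary $1$-dimensional direction into $\ell + T^\ast$ with a nonzero $T^\ast$-component. Dually, $\tau_2$ then collapses the lines $X_2 Y$ and $X_1 Z$ onto $X'_5$ and $X'_6$, but in a way that keeps the images from dropping into the embedded plane $V$ and so preserves the dimension count.

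The key observation I would exploit is that, by the conclusion of the first claim, $H_Y \subseteq V^\ast$ is the $1$-dimensional subspace corresponding to $Y \in X_3 X_4$; since $X_2 \notin X_3 X_4$ by general position, $H_Y + H_2$ is $2$-dimensional and corresponds to the projective line $X_2 Y$. Because $X_2 \notin \ell$, this line meets $\ell$ at exactly the single point $X'_5 = X_2 Y \cap \ell$, so $(H_Y + H_2) \cap \ell$ is a $1$-dimensional subspace $\spn(u_5)$ corresponding to $X'_5$. I would pick any $\eta_5 \in (H_Y + H_2) \setminus \ell$, so that $H_Y + H_2 = \spn(u_5, \eta_5)$ and $\eta_5$ generates a complement of $\ell$ in $V^\ast$. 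Consequently $V'^\ast = (\ell + T^\ast) \oplus \spn(\eta_5)$, so setting $\tau_2^\ast$ to be the identity on $\ell + T^\ast$ together with $\tau_2^\ast(\eta_5) = \xi$ uniquely extends to a linear map $V'^\ast \to V'^\ast$, and hence yields $\tau_2 \colon V' \to V'$.

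Then I would verify the conclusions. The first, $\tau_2^\ast(H_Y + H_2) = \spn(u_5, \xi) \subseteq \ell + T^\ast$, is immediate; this image has dimension $2$ and intersects $V^\ast$ in exactly $\spn(u_5)$, since $\xi \notin V^\ast$. For the pair $(H_Z + H_1, X'_6)$ I would argue symmetrically: by the same reasoning, $H_Z + H_1 = \spn(u_6, \eta_6)$ for some $u_6 \in \ell$ corresponding to $X'_6$ and some $\eta_6 \notin \ell$. Since $\dim(V^\ast/\ell) = 1$ and both $\eta_5$ and $\eta_6$ represent nonzero classes there, we have $\eta_6 = \alpha \eta_5 + \beta'$ for some $\alpha \in \FF_p \setminus \{0\}$ and $\beta' \in \ell$, so $\tau_2^\ast(\eta_6) = \alpha \xi + \beta' \in \ell + T^\ast$ has nonzero $T^\ast$-component. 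Thus $\tau_2^\ast(H_Z + H_1) = \spn(u_6, \alpha \xi + \beta')$ lies in $\ell + T^\ast$ with dimension at most $2$, and intersects $V^\ast$ in exactly $\spn(u_6)$, as required.

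The one step I expect to require care is making sure the construction works even in the degenerate configuration $H_5 = H_6$ (which can occur when $X_5 = X_6$, in which case the augmented datum genuinely records $\ell$ via the extra coordinate). The clean way around this is that imposing $\tau_2^\ast|_{\ell + T^\ast} = \id$ is uniformly stronger than the condition $\tau_2^\ast|_{H_5 + H_6} = \id$ that we actually need, so no case split is required: the same $\tau_2$ works throughout.
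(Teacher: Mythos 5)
Your construction is essentially the same as the paper's: both take $\tau_2^\ast$ to be the identity on $\ell + T^\ast$ and send a chosen $1$-dimensional complement of $\ell + T^\ast$ to $\spn(\xi)$, making $\tau_2^\ast$ a projection onto $\ell + T^\ast$; the paper uses $\phi_1$ as the complementary direction where you use $\eta_5 \in (H_Y + H_2)\setminus \ell$, but this is a cosmetic difference and the verification goes through identically. (Your closing remark about $X_5 = X_6$ is harmless but not really needed here, since $H_5, H_6$ do not enter this particular claim except through the containment $H_5, H_6 \subseteq \ell + T^\ast$, which is uniform.)
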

    Note this gives the definition of the subspaces $H'_5$ and $H'_6$ from the statement. 
    \begin{proof}[Proof of claim]
      Let $e_1,e_2,e_3$ be any basis for $\ell + T^\ast$; so (say) $\phi_1, e_1, e_2, e_3$ is a basis for $V'^\ast$.  Define $\tau_2$ by:
      \begin{align*}
        \tau_2^\ast(e_i) &= e_i \ \ (i = 1,2,3) \,; \\
        \tau_2^\ast(\phi_1) &= \xi \, .
      \end{align*}
      So, $\tau_2^\ast$ is a projection onto the subspace $\ell + T^\ast$, and in particular its image is $\ell + T^\ast$.  Also, $H_1, H_2$, $H_Y, H_Z$ all have dimension $1$, and so the first part of the claim follows.

      Suppose $x \in H'_5 \cap V^\ast$.  Necessarily $x \in \ell$, and $x = \tau_2^\ast(y)$ for some $y \in H_Y + H_2$.  Since $y \in V^\ast$, we may write $y = \alpha \phi_1 + z$ for some $\alpha \in \FF_p$ and $z \in \ell$, and then $x = \alpha \xi + z$.  But $x \in \ell$, $z \in \ell$ and $\xi \notin \ell$, so $\alpha = 0$ and $x = y = z$.  We deduce that $x$ lies in $H_Y + H_2$ (as $y$ does) and in $\ell$; so $x$ lies in the $1$-dimension subspace $(H_Y + H_2) \cap \ell$ which corresponds precisely to $X'_5$.

      A parallel argument shows that $H'_6 \cap V^\ast$ is contained in the subspace corresponding to $X'_6$.
    \end{proof}
    At this point properties (ii), (iv), (v) and (viii) from the statement are satisfied, in addition to (i) as discussed. Indeed, (ii) follows from the requirement $\tau_2^\ast|_{\ell + T^\ast} = \id$, since $(\ker \psi_5)^\perp, (\ker \psi_6)^\perp \subseteq \ell + T^\ast$.  The facts $\tau_1^\ast(H_1+H_5) = H_Y$ and $\tau_1^\ast(H_Y) \subseteq H'_5$, and similarly $\tau_2^\ast(H_2+H_6) = H_Z$ and $\tau_2^\ast(H_Z) \subseteq H'_6$, give (iv).  Property (v) is immediate from the definition of $H'_5$, $H'_6$, and (viii) is contained in the previous claim.

    Our final task is to construct $\tau_3$ and $\tau_4$.  Specifically, we want:
    \begin{claim*}
      There exist maps $\tau_3 \colon V' \to V'$ and $\tau_4 \colon V' \to V'$ such that
      \begin{itemize}
        \item $\psi_3 \circ \tau_3 = \psi_3$;
        \item $\tau_3^\ast(H_5 + H_6) \subseteq H_1 + H_5$;
        \item $\tau_3^\ast(H_1 + H_2) \subseteq H_2 + H_6$;
      \end{itemize}
      and
      \begin{itemize}
        \item $\psi_6 \circ \tau_4 = \psi_6$;
        \item $\tau_4^\ast(H_1 + H_2) \subseteq H_Y + H_2$;
        \item $\tau_4^\ast(H_3 + H_4) \subseteq H_Z + H_1$.
      \end{itemize}
    \end{claim*}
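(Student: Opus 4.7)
The plan is to construct $\tau_3$ and $\tau_4$ by the same linear-algebraic recipe used for $\tau_1$ and $\tau_2$: pick a direct sum decomposition of $V'^\ast$, specify $\tau^\ast$ on each summand, and verify the required subspace inclusions using the general position hypotheses. For $\tau_3$, since $X_3 \notin \ell$ we have $V'^\ast = \spn(\phi_3) \oplus (\ell + T^\ast)$, and we already observed that $\ell + T^\ast = H_5 + H_6$. I would define $\tau_3^\ast$ to be the identity on $\spn(\phi_3)$ and a linear map $R \colon H_5+H_6 \to H_1+H_5$ on the complement; the first two required properties are then automatic. Writing $\phi_i = p_i\phi_3 + \tilde\phi_i$ with $\tilde\phi_i \in H_5+H_6$ for $i=1,2$, the third property becomes $R(\tilde\phi_i) \in (H_1+H_5) \cap (H_2+H_6 - p_i\phi_3)$. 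The hypothesis that no triple $X_i,X_j,X_5$ or $X_i,X_j,X_6$ is collinear forces $H_1+H_5+H_2+H_6 = V'^\ast$, so each of these intersections is a non-empty $2$-dimensional affine subspace; and $\tilde\phi_1,\tilde\phi_2$ are linearly independent in $H_5+H_6$ because $X_1,X_2,X_3$ are not collinear. Picking any admissible values of $R$ on $\tilde\phi_1,\tilde\phi_2$ and extending linearly to a map into $H_1+H_5$ produces $\tau_3$.

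For $\tau_4$, I would use the decomposition $V'^\ast = H_6 \oplus \spn(\phi_1,\phi_3)$, which is valid because $X_1,X_3,X_6$ are not collinear. Decompose $\phi_2 = c_2\phi_6 + a_2\phi_1 + b_2\phi_3$ and $\phi_4 = c_4\phi_6 + a_4\phi_1 + b_4\phi_3$ in the basis $\phi_1,\phi_3,\phi_6$ of $V^\ast$, set $u = \tau_4^\ast(\phi_1)$ and $v = \tau_4^\ast(\phi_3)$ as the free parameters, and declare $\tau_4^\ast$ to be the identity on $H_6$. The required inclusions then reduce to the four conditions $u \in H_Y+H_2$, $v \in H_Z+H_1$, $c_2\phi_6 + a_2u + b_2v \in H_Y+H_2$, and $c_4\phi_6 + a_4u + b_4v \in H_Z+H_1$.

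The hard part will be verifying this last system is solvable. Unlike for $\tau_3$, where the constraints coming from $\phi_1$ and $\phi_2$ are independent, here the four constraints are coupled and a genuine compatibility check is required. Writing $u = \alpha y + \beta\phi_2$ and $v = \gamma z + \delta\phi_1$, the third and fourth conditions become linear equations in $\alpha,\beta,\gamma,\delta$ asserting that specific combinations of $\phi_6, y, z, \phi_1, \phi_2$ lie in given $2$-dimensional subspaces of $V^\ast$. I expect these to be solvable precisely because $y \in X_1X_5 \cap X_3X_4$ and $z \in X_2X_6 \cap X_3X_4$ encode the right relations (in particular, $\spn(y,z,\phi_1,\phi_2) = V^\ast$ since $Y \ne Z$ and $X_1,X_2,X_3,X_4$ are in general position), and the no-three-collinear hypothesis guarantees that the relevant coefficients $a_i,b_i$ are non-zero. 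The cleanest route to finish is to fix coordinates on $V^\ast$ adapted to the basis $\phi_1,\phi_3,\phi_6$ and verify the resulting $2{\times}2$ system directly, yielding explicit formulae for $\alpha,\beta,\gamma,\delta$.
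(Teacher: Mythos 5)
Your construction is genuinely different from the paper's. The paper extracts a small reusable lemma --- any configuration $(w, U_1, U_2)$ of a point off two distinct $2$-dimensional subspaces of a $3$-dimensional space can be carried to any other such configuration by an automorphism --- and applies it twice, once on $V^\ast$ to get $\theta_3$ (mapping $(\phi_3, \ell, H_1{+}H_2)$ to $(\phi_3, (H_1{+}H_5)\cap V^\ast, (H_2{+}H_6)\cap V^\ast)$) and once to get $\theta_4$ (mapping $(\phi_6, H_1{+}H_2, H_3{+}H_4)$ to $(\phi_6, H_Y{+}H_2, H_Z{+}H_1)$); it then lifts each $\theta_i$ to $V'^\ast$ by a simple rule on the extra coordinate. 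Your route is a direct decomposition-and-solve argument. Both are reasonable, but the paper's is more uniform and does not require separate compatibility analyses for the two maps.

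There are two issues with the details. First, for $\tau_3$ you appeal to ``$\ell + T^\ast = H_5 + H_6$.'' This can fail: if $X_5 = X_6$ and $\chi_5 - \chi_6 \in \spn(\phi_5)$ then $H_5 = H_6$ is $2$-dimensional, so $H_5 + H_6 \subsetneq \ell + T^\ast$, and then $\ell \not\subseteq H_5 + H_6$, so the decomposition $\phi_i = p_i \phi_3 + \tilde\phi_i$ with $\tilde\phi_i \in H_5 + H_6$ does not exist. (This degenerate case $X_5 = X_6$ is not avoidable: the whole reason the augmented datum is introduced is to carry the block move through it.) The fix is harmless --- just take $R$ to be defined on $\ell + T^\ast$ with image in $H_1 + H_5$ --- but as written the argument has a gap. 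Second, for $\tau_4$ you reduce to a solvability question and then write ``I expect these to be solvable,'' which is a genuine gap; you also describe the four constraints as ``coupled,'' but in fact they decouple. Substituting $u \in H_Y + H_2$ into the third condition leaves $b_2 v \in (H_Y+H_2) - c_2\phi_6$, a constraint on $v$ alone; substituting $v \in H_Z + H_1$ into the fourth leaves $a_4 u \in (H_Z+H_1) - c_4\phi_6$, a constraint on $u$ alone. Each pair of constraints is a non-empty intersection of a $2$-dimensional subspace with an affine translate of another, non-empty because $H_Y + H_2 + H_Z + H_1 = V^\ast$ (equivalently $Y X_2 \ne Z X_1$, which follows from $Y \ne Z$ and general position), and $a_4, b_2 \ne 0$ follows from the no-collinear-triple hypotheses exactly as you note. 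So the completion is straightforward, but you should carry it out rather than defer to a coordinate computation you do not perform.
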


  Combined with the properties of $\tau_1$ and $\tau_2$ we have already shown, this suffices for the remaining parts (iii),(vi),(vii) of the statement.

  We isolate yet another linear algebra sub-claim.
  \begin{lemma}
    Given a tuple $(w,U_1, U_2)$ where $U_1,U_2$ are two distinct $2$-dimensional subspaces of a vector space $W$ of dimension $3$, and $0 \ne w \in W$ is a point not in either subspace; and another such configuration $(w', U'_1, U'_2)$; there is some isomorphism $\theta \colon W \to W$ mapping $w \mapsto w'$, $U_1 \mapsto U'_1$, $U_2 \mapsto U'_2$.
  \end{lemma}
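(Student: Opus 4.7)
The plan is to exhibit a standard form for such triples $(w, U_1, U_2)$ and then conjugate through it. Since $U_1 \ne U_2$ are distinct $2$-dimensional subspaces of the $3$-dimensional space $W$, we have $U_1 + U_2 = W$ and hence $\dim(U_1 \cap U_2) = 1$. First I would pick any non-zero $f_1 \in U_1 \cap U_2$, extend to a basis $(f_1, f_2)$ of $U_1$, and choose $f_3 \in U_2 \setminus \spn(f_1)$; then $(f_1, f_2, f_3)$ is a basis of $W$ with $U_1 = \spn(f_1, f_2)$ and $U_2 = \spn(f_1, f_3)$.

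Writing $w = a f_1 + b f_2 + c f_3$, the hypotheses $w \notin U_2$ and $w \notin U_1$ force $b \ne 0$ and $c \ne 0$. I claim that we can modify the basis so that $w$ becomes $e_1 + e_2 + e_3$ while keeping $U_1 = \spn(e_1, e_2)$ and $U_2 = \spn(e_1, e_3)$. Indeed, any substitution of the form $e_1 = \lambda f_1$, $e_2 = \mu f_2 + \alpha f_1$, $e_3 = \nu f_3 + \beta f_1$ (with $\lambda, \mu, \nu \in \FF_p^\times$ and $\alpha, \beta \in \FF_p$) preserves these two subspaces; expressing $w = x e_1 + y e_2 + z e_3$ in the new basis yields the relations $y\mu = b$, $z\nu = c$, and $x\lambda + y\alpha + z\beta = a$. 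Choosing $\mu = b$, $\nu = c$, $\alpha = a$, $\beta = 0$, $\lambda = 1$ produces $x = y = z = 1$, as desired.

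Applying the identical construction to $(w', U'_1, U'_2)$ yields a basis $(e'_1, e'_2, e'_3)$ with $w' = e'_1 + e'_2 + e'_3$, $U'_1 = \spn(e'_1, e'_2)$ and $U'_2 = \spn(e'_1, e'_3)$. The linear isomorphism $\theta \colon W \to W$ defined by $\theta(e_i) = e'_i$ then sends $w \mapsto w'$, $U_1 \mapsto U'_1$, $U_2 \mapsto U'_2$. There is no real obstacle here; the only point needing care is that the normalization step is always feasible, which holds because the three free parameters $\lambda, \alpha, \beta$ leave enough slack to solve the single scalar equation determining $x$, regardless of whether $a$ is zero or not.
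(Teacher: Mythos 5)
Your approach — normalize the triple $(w, U_1, U_2)$ to a canonical form by a suitable choice of basis, then compose the two changes of basis — is the same as the paper's, which works in coordinates where $U_1 = \{x = 0\}$, $U_2 = \{y=0\}$ and then normalizes $w$ to $(1,1,0)$. The argument is sound, but there is an arithmetic slip in the concrete parameter choice: with $y\mu = b$, $z\nu = c$, $x\lambda + y\alpha + z\beta = a$ and the values $\mu = b$, $\nu = c$, $\lambda = 1$, $\alpha = a$, $\beta = 0$, one gets $y = z = 1$ but then $x + a = a$, so $x = 0$, not $x=1$. To hit the stated target $w = e_1 + e_2 + e_3$ you should take, say, $\alpha = a - 1$ (or just $\lambda + \alpha + \beta = a$ with $\lambda \ne 0$, which is always solvable). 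Alternatively, accept $x = 0$ as the normal form — then your construction produces $w = e_2 + e_3$ on both sides, which works just as well and in fact matches the paper's choice $(1,1,0)$ (zero component along $U_1 \cap U_2$). Either fix makes the proof complete.
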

  \begin{proof}
    After a change of coordinates, we may assume $U_1 = \{(x,y,z) \in \FF_p^3 \colon x = 0\}$ and $U_2 = \{(x,y,z) \in \FF_p^3 \colon y = 0\}$ (e.g.~by considering two corresponding points in $W^\ast$ and extending to a basis).  Suppose $w = (\alpha, \beta, \gamma)$ in these coordinates; so $\alpha, \beta \ne 0$ by assumption.  By a further rescaling of coordinates we may therefore assume $\alpha = \beta = 1$.  Finally, a change of variables $z' = z - \gamma x$ means $w = (1, 1, 0)$ and $U_1, U_2$ are unchanged.

    Repeating this argument for $(w', U'_1, U'_2)$ and interpreting the changes of coordinates as an isomorphism gives the result.
  \end{proof}

  \begin{proof}[Proof of claim]
    Let $\theta_3 \colon V^\ast \to V^\ast$ be the map given by the lemma applied to the tuples $(\phi_3, \ell, H_1 + H_2)$ and $(\phi_3, (H_1 + H_5) \cap V^\ast, (H_2 + H_6) \cap V^\ast)$ in $V^\ast$.  Note that $X_3$ is not on $\ell$, $X_1 X_2$, $X_1 X_5$ or $X_2 X_6$ by our assumptions, and $X_1 X_2 \ne \ell$, $X_1 X_5 \ne X_2 X_6$, so the hypotheses are satisfied.  Then let $\tau_3^\ast(v, t) = (\theta_3(v), 0)$.  It follows that this has the properties claimed.

    Now let $\theta_4 \colon V^\ast \to V^\ast$ be the map given by the lemma applied to $(\phi_6, H_1 + H_2, H_3 + H_4)$ and $(\phi_6, H_Y + H_2, H_Z + H_1)$.  Again, $X_6$ does not lie on $X_1 X_2$ or $X_3 X_4$,  or on $Y X_2$ or $Z X_1$ (as the latter two imply respectively that $Y=Z$ or $X_1 X_2 X_6$ are collinear); and $X_1 X_2 \ne X_3 X_4$ and $Y X_2 \ne Z X_1$ (for many reasons); so this is valid.
    
    Now let $\tau_4^\ast$ be the unique map defined by $\tau_4^\ast(v) = \theta_4(v)$ for any $v \in V^\ast$ and $\tau_4^\ast(\chi_6, 1) = (\chi_6, 1)$, where $\chi_6$ is the linear form from the definition of $\Psi$, as in Definition \ref{def:augmented}.  (This is possible: choose a basis for $V^\ast$ and extend it to a basis for $V'^\ast$ by adding $(\chi_6, 1)$; then define $\tau_4^\ast$ suitably on this basis.)  Again, this gives the desired properties.
  \end{proof}

  This concludes the proof of Lemma \ref{lem:horrible-la}, and thereby that of Lemma \ref{lem:key-lemma}.
  \end{proof}
  \renewcommand{\qedsymbol}{}
\end{proof}

This is the last ingredient in the proof of Theorem \ref{thm:positive}.  We briefly summarize the proof as a whole, as the different parts have been spread over the last few sections.

First one calculates the point $[r:s] \in \PP^1(\FF_p)$, given explicitly in Lemma \ref{lem:find-alpha}, corresponding to the point $X_5$ on $\ell$ in our chosen coordinates.

Next, we convert the standard datum given into an augmented datum (by Lemma \ref{lem:make-augmented}).

In the main part of the argument, we apply Lemma \ref{lem:key-lemma} repeatedly, under various permutations of $\{1,\dots,4\}$, following the steps from from Lemma \ref{lem:euclid} applied to the point $[r:s]$.

If at any point we arrive at a datum where $X_i X_j X_k$ are collinear for some $1 \le i < j \le 4$ and $k = 5, 6$, we terminate this process early; but if it runs to completion, some such collinearity is guaranted at the end.  By Lemma \ref{lem:make-augmented} again (and Lemma \ref{lem:no-final-coincide}) we dominate this by the corresponding standard datum.

Finally, we apply Proposition \ref{prop:skew-collinear}, or the standard Cauchy--Schwarz complexity bound (Proposition \ref{prop:csc}), to control this final datum by $\|f_1\|_{U^2}^{1/2}$ or $\|f_1\|_{U^2}$ respectively.  By keeping track of the various domination statements, and noting in particular that we did not apply Lemma \ref{lem:key-lemma} too many times, we deduce the required bound on the original linear datum.

\section{A proof of Theorem \ref{thm:negative}}
\label{sec:negative}

Here we describe the construction of the counterexample described in Theorem \ref{thm:negative}. 

As in the statement, let $p \equiv \pm 1 \pmod{8}$ be a large prime.  The congruence condition ensures that $2$ is a quadratic residue modulo $p$.  In what follows, we will assume that some choice of square root of $2$ in $\FF_p$ has been fixed, and refer to it simply as $\sqrt{2}$.

We let $X \subseteq \FF_p$ denote the two-dimensional arithmetic progression:
\[
  X = \left\{ a + b \sqrt{2} \colon a,b \in \ZZ,\, |a|,|b| \le \alpha p^{1/2} \right\}
\]
for some small absolute constant $\alpha > 0$ to be specified.

We note that any value $x \in \FF_p$ has at most one representation as $x = a + b \sqrt{2}$ where $a,b$ are integers with $|a|,|b| \le p^{1/2} / 4$.  Indeed, if $x = a + b \sqrt{2} = a'+b' \sqrt{2}$ then $(a-a')^2 - 2(b-b')^2$ is a multiple of $p$; but it has absolute value at most $\max((a-a')^2, 2(b-b')^2) \le 2 (p^{1/2} /2 )^2 < p$.  Hence, $(a-a')^2 = 2 (b-b')^2$ which is a contradiction unless $a=a'$, $b=b'$.

Now, define the system $\Phi$ of linear forms $\phi_1,\dots,\phi_6 \colon \FF_p^3 \to \FF_p$ by:
\begin{align*}
  \phi_1(x,y,z) &= (1+\sqrt{2})x + z & \phi_3(x,y,z) &= (1 + \sqrt{2})y + z &  \phi_5(x,y,z) &= x+y+\sqrt{2} z \\
  \phi_2(x,y,z) &= (1-\sqrt{2})x + z & \phi_4(x,y,z) &= (1 - \sqrt{2})y + z &  \phi_6(x,y,z) &= x+y-\sqrt{2} z \, .
\end{align*}
We claim that these forms do not lie on a conic, i.e.~the system has true complexity $1$.  Since $p \ne 2$, we need not distinguish between symmetric bilinear forms and quadratic forms, so it makes sense to write
\begin{align*}
  \phi_1^2(x,y,z) &= (3+2\sqrt{2}) x^2 + z^2 + 2(1+\sqrt{2}) xz \\
  \phi_2^2(x,y,z) &= (3-2\sqrt{2}) x^2 + z^2 + 2(1-\sqrt{2}) xz \\
  \phi_3^2(x,y,z) &= (3+2\sqrt{2}) y^2 + z^2 + 2(1+\sqrt{2}) yz \\
  \phi_4^2(x,y,z) &= (3-2\sqrt{2}) y^2 + z^2 + 2(1-\sqrt{2}) yz \\
  \phi_5^2(x,y,z) &= x^2 + y^2 + 2 z^2 + 2 xy + 2\sqrt{2} xz + 2 \sqrt{2} yz \\
  \phi_6^2(x,y,z) &= x^2 + y^2 + 2 z^2 + 2 xy - 2\sqrt{2} xz - 2 \sqrt{2} yz
\end{align*}
and then it suffices to verify that the matrix (whose columns correspond to $x^2,y^2,z^2,xy,xz,yz$)
\[
  \begin{pmatrix}
    3 + 2\sqrt{2} & 0 & 1 & 0 & 2 + 2\sqrt{2} & 0 \\
    3 - 2\sqrt{2} & 0 & 1 & 0 & 2 - 2\sqrt{2} & 0 \\
    0 & 3 + 2\sqrt{2} & 1 & 0 & 0 & 2 + 2\sqrt{2} \\
    0 & 3 - 2\sqrt{2} & 1 & 0 & 0 & 2 - 2\sqrt{2} \\
    1 & 1 & 2 & 2 & 2 \sqrt{2} & 2 \sqrt{2} \\
    1 & 1 & 2 & 2 & -2 \sqrt{2} & -2 \sqrt{2} 
  \end{pmatrix}
\]
is non-singular.  In fact, the determinant of this matrix is $512 \sqrt{2}$, and so $\phi_1,\dots,\phi_6$ do not lie on a conic.

Our reason for choosing these specific forms is that they nonetheless satisfy a kind of ``skew conic'' identity which we now describe.  Suppose we define forms $\widetilde{\phi_1},\dots,\widetilde{\phi_6} \colon \ZZ[\sqrt{2}]^3 \to \ZZ[\sqrt{2}]$ using the same coefficients as above, now thought of as elements of the ring $\ZZ[\sqrt{2}]$.  These also induce forms $\QQ(\sqrt{2})^3 \to \QQ(\sqrt{2})$ in the obvious way.  Write $\sigma(a+b\sqrt{2}) = a - b\sqrt{2}$ for the Galois conjugate and $N(z) = z \sigma(z)$ for the norm of $z \in \QQ(\sqrt{2})$ (so $N(a+b\sqrt{2}) = a^2-2 b^2$). Then the ``skew conic'' identity is the fact that:
\begin{equation}
  \label{eq:norm-relation}
\begin{aligned}
  N(\widetilde{\phi_1}&(x,y,z)) - N(\widetilde{\phi_2}(x,y,z)) + N(\widetilde{\phi_3}(x,y,z)) - N(\widetilde{\phi_4}(x,y,z)) + N(\widetilde{\phi_5}(x,y,z)) - N(\widetilde{\phi_6}(x,y,z)) \\
  &= \begin{aligned}[t]
    & ((1+\sqrt{2}) x + z)((1 - \sqrt{2})\sigma(x) + \sigma(z)) - ((1-\sqrt{2}) x + z)((1 + \sqrt{2})\sigma(x) + \sigma(z)) \\
    + & ((1+\sqrt{2}) y + z)((1 - \sqrt{2})\sigma(y) + \sigma(z)) - ((1-\sqrt{2}) y + z)((1 + \sqrt{2})\sigma(y) + \sigma(z)) \\
    + & (x + y + \sqrt{2}z)(\sigma(x) + \sigma(y) - \sqrt{2} \sigma(z)) - (x + y - \sqrt{2}z)(\sigma(x) + \sigma(y) + \sqrt{2} \sigma(z))
  \end{aligned} \\
  &= 2\sqrt{2} (x \sigma(z) - \sigma(x) z) + 2 \sqrt{2} (y \sigma(z) - \sigma(y) z) + 2 \sqrt{2} ((\sigma(x) + \sigma(y))z - (x+y) \sigma(z)) \\
  &= 0
\end{aligned}
\end{equation}
for any $x,y,z \in \QQ(\sqrt{2})$.

So, we can define a function
\begin{align*}
  f \colon \FF_p &\to \CC \\
  x &\mapsto \begin{cases} 
    e\big(R\, N(a+b\sqrt{2}) \big) &\colon x \in X,\, x=a+b\sqrt{2} \text{ for } a,b \in \ZZ, |a|,|b| \le \alpha p^{-1/2} \\
      0 &\colon \text{ otherwise }
  \end{cases}
\end{align*}
for some parameter $R \in \RR/\ZZ$ to be specified; so $f$ is supported on $X$.  We claim, for say $\alpha = 1/49$, that for every $x,y,z \in \FF_p$ the expression
\[
  f(\phi_1(x,y,z)) \overline{f(\phi_2(x,y,z))}
  f(\phi_3(x,y,z)) \overline{f(\phi_4(x,y,z))}
  f(\phi_5(x,y,z)) \overline{f(\phi_6(x,y,z))}
\]
takes value either $1$, if all six forms take values in $X$, or $0$ otherwise.  This is essentially a kind of Fre\u{\i}man isomorphism argument over $\ZZ[\sqrt{2}]$.

Indeed, note that tuples $(\phi_1(x,y,z),\dots,\phi_6(x,y,z))$ or $\big(\widetilde{\phi_1}(x,y,z),\dots,\widetilde{\phi_6}(x,y,z)\big)$ for $x,y,z \in \FF_p$ or $x,y,z \in \QQ(\sqrt{2})$ respectively are precisely those tuples $(r_1,\dots,r_6)$ satisfying the equations
\begin{align*}
  (1 - \sqrt{2})(r_1 - r_3) - (1 + \sqrt{2})(r_3 - r_4) &= 0\\
  2 r_1 + 2 r_3 - (1 + 2 \sqrt{2}) r_5 - r_6 &= 0 \\
  2 r_2 + 2 r_4 - r_5 - (1 - 2 \sqrt{2}) r_6 &= 0\ .
\end{align*}
So, if $\phi_1(x,y,z),\dots,\phi_6(x,y,z)$ are all in $X$, write $\phi_i(x,y,z) = a_i + b_i \sqrt{2}$ for the unique integers $a_i, b_i$ with $|a_i|,|b_i| \le \alpha p^{-1/2}$, and let $r_i = a_i + b_i\sqrt{2}$ be the corresponding elements of $\ZZ[\sqrt{2}]$.  Then each of the left hand sides of the equations above has the form $u + v \sqrt{2}$ for integers $u, v$ with $|u|,|v| \le 12 \alpha p^{1/2} < p^{1/2} / 4$, and is congruent to $0 = 0 + 0 \sqrt{2}$ modulo $p$, so by our earlier uniqueness argument must be $0$.  Hence $(r_1,\dots,r_6) = \big(\widetilde{\phi_1}(x,y,z),\dots,\widetilde{\phi_6}(x,y,z)\big)$ for some $x,y,z \in \QQ(\sqrt{2})$, so \eqref{eq:norm-relation} applies and the claim follows.

Given that $\phi_1(x,y,z),\dots,\phi_6(x,y,z) \in X$ for any $x,y,z$ of the form $a+b\sqrt{2}$ for integers $a,b$ with $|a|,|b| \le p^{1/2} / 199$, we deduce that
\[
  \left|\Lambda_{\Phi}(f,\overline{f},f,\overline{f},f,\overline{f}) \right| \ge 10^{-12}
\]
for $p$ sufficiently large, as required.

Finally we need to consider $\|f\|_{U^2}$.  This is a fairly standard estimate on quadratic exponential sums, but with some variations.  For simplicity we use a mean value strategy.

For $x \in X$, let $N(x)$ denote $N(a+b\sqrt{2}) = a^2 - 2 b^2$ where $a,b$ are the unique integers in $[-\alpha p^{1/2}, \alpha p^{1/2}]$ with $x = a + b \sqrt{2}$.  Then for any $x,h,h' \in \FF_p$ such that $x, x+h, x+h', x+h+h'$ are in $X$, we have $h = r + s \sqrt{2}$, $h' = r'+s'\sqrt{2}$ for some unique integers $r,s,r',s' \in [-2\alpha p^{1/2},2\alpha p^{1/2}]$, and
\[
  N(x) - N(x+h) - N(x+h') + N(x+h+h') = 2 r r' - 4 s s' \, .
\]
So, for any such $x,h,h'$, if we now take an average in the parameter $R$, we get
\begin{align*}
  \int_{R \in \RR/\ZZ} f(x) \overline{f(x+h)} \overline{f(x+h')} f(x+h+h') &= \int_{R \in \RR/\ZZ} e(R (2 r r' - 4 s s')) \\
  &= [r r' = 2 s s'] \, .
\end{align*}
It follows that
\[
  \int_{R \in \RR/\ZZ} \|f\|_{U^2}^4 = p^{-3} \sum_{\substack{x \in X \\ |r|,|s|,|r'|,|s'| \le 2 \alpha p^{1/2}}} 1_X(x) 1_X(x+h) 1_X(x+h') 1_X(x+h+h') [ r r' = 2 s s']
\]
and we note that for $r,r'$ fixed and any fixed $s' \ne 0$ there is at most one solution in $s$ to $r r' = 2 s s'$, so the right hand side is bounded by
\[
  2 p^{-3} |X| (1 + 4 \alpha p^{1/2})^3 \le p^{-1/2}
\]
whenever $p$ is sufficiently large.  Picking some $R$ for which $\|f\|_{U^2}^4$ is at most its mean value, the result follows.





\bibliographystyle{amsplain}

\begin{thebibliography}{99}
\bibitem{austin1}
Tim Austin, \emph{Partial difference equations over compact abelian groups, i:
  modules of solutions}, 2013.

\bibitem{austin2}
\bysame, \emph{Partial difference equations over compact abelian groups, ii:
  step-polynomial solutions}, 2013.

\bibitem{gowers-aps}
W.~T. Gowers, \emph{A new proof of {S}zemer\'edi's theorem}, Geom. Funct. Anal.
  \textbf{11} (2001), no.~3, 465--588.

\bibitem{gowers-wolf-1}
W.~T. Gowers and J.~Wolf, \emph{The true complexity of a system of linear
  equations}, Proc. Lond. Math. Soc. (3) \textbf{100} (2010), no.~1, 155--176.

\bibitem{gowers-wolf-2}
\bysame, \emph{Linear forms and higher-degree uniformity for functions on
  {$\Bbb F^n_p$}}, Geom. Funct. Anal. \textbf{21} (2011), no.~1, 36--69.

\bibitem{gowers-wolf-3}
\bysame, \emph{Linear forms and quadratic uniformity for functions on {$\Bbb
  F^n_p$}}, Mathematika \textbf{57} (2011), no.~2, 215--237.

\bibitem{gowers-wolf-4}
\bysame, \emph{Linear forms and quadratic uniformity for functions on {$\Bbb
  Z_N$}}, J. Anal. Math. \textbf{115} (2011), 121--186.

\bibitem{green-tao-arithmetic-regularity}
Ben Green and Terence Tao, \emph{An arithmetic regularity lemma, an associated
  counting lemma, and applications}, An irregular mind, Bolyai Soc. Math.
  Stud., vol.~21, J\'anos Bolyai Math. Soc., Budapest, 2010, pp.~261--334.

\bibitem{green-tao-primes}
Benjamin Green and Terence Tao, \emph{Linear equations in primes}, Ann. of
  Math. (2) \textbf{171} (2010), no.~3, 1753--1850. \MR{2680398}

\bibitem{hatami-hatami-lovett}
Hamed Hatami, Pooya Hatami, and Shachar Lovett, \emph{General systems of linear
  forms: equidistribution and true complexity}, Adv. Math. \textbf{292} (2016),
  446--477.

\bibitem{hatami-lovett}
Hamed Hatami and Shachar Lovett, \emph{Higher-order {F}ourier analysis of
  {$\Bbb F^n_p$} and the complexity of systems of linear forms}, Geom. Funct.
  Anal. \textbf{21} (2011), no.~6, 1331--1357.

\bibitem{lubotzky}
Alexander Lubotzky, \emph{Discrete groups, expanding graphs and invariant
  measures}, Modern Birkh\"auser Classics, Birkh\"auser Verlag, Basel, 2010.

\bibitem{sanders}
Tom Sanders, \emph{On the {B}ogolyubov-{R}uzsa lemma}, Anal. PDE \textbf{5} (2012), no.~3, 627--655.

\bibitem{tao-book}
Terence Tao, \emph{Higher order {F}ourier analysis}, Graduate Studies in
  Mathematics, vol. 142, American Mathematical Society, Providence, RI, 2012.

\bibitem{tao-vu}
Terence Tao and Van~H. Vu, \emph{Additive combinatorics}, Cambridge Studies in
  Advanced Mathematics, vol. 105, Cambridge University Press, Cambridge, 2010.
\end{thebibliography}
\providecommand{\bysame}{\leavevmode\hbox to3em{\hrulefill}\thinspace}
\providecommand{\MR}{\relax\ifhmode\unskip\space\fi MR }
\providecommand{\MRhref}[2]{%
  \href{http://www.ams.org/mathscinet-getitem?mr=#1}{#2}
}
\providecommand{\href}[2]{#2}


\begin{dajauthors}
\begin{authorinfo}[f]
  Freddie Manners \\
  Department of Mathematics \\
  Stanford University \\
  {\tt fmanners\imageat{}stanford.edu}
\end{authorinfo}
\end{dajauthors}

\end{document}